\DeclareMathOperator*{\forkindep}{\raise0.2ex\hbox{\ooalign{\hidewidth$\vert$\hidewidth\cr\raise-0.9ex\hbox{$\smile$}}}}
\DeclareMathOperator*{\ind}{\forkindep}
\DeclareMathOperator*{\nforkindep}{\raise0.2ex\hbox{\ooalign{\hidewidth$\not\vert\hspace{3pt} $\hidewidth\cr\raise-0.9ex\hbox{$\smile$}}}}
\DeclareMathOperator{\Th}{Th}
\DeclareMathOperator*{\nind}{\nforkindep}
\newcommand{\proj}{\mathrm{proj}_{\omega \to 1}}
\DeclareMathOperator{\clop}{Clopen}
\newcommand{\UU}{\mathbf U}
\newcommand{\LL}{\mathcal L}
\newcommand{\cont}{\ensuremath{2^{\aleph_0}}}
\newcommand{\bI}{\ensuremath{\mathbb I}}
\newcommand{\CC}{\mathscr{C}}
\newcommand{\MM}{\mathbf{M}}
\newcommand{\NN}{\mathbf{N}}
\newcommand{\abar}{\bar a}
\newcommand{\bbar}{\bar b}
\newcommand{\mbar}{\bar m}
\newcommand{\nbar}{\bar n}
\newcommand{\xbar}{\bar x}
\newcommand{\ybar}{\bar y}
\newcommand{\zbar}{\bar z}
\newcommand{\az}{\aleph_0}
\newcommand{\tb}{\mathbf B}
\newcommand{\NF}{{\rm NF}}
\newtheorem{theorem}{Theorem}[section]
\newtheorem{corollary}[theorem]{Corollary}
\newtheorem{fact}[theorem]{Fact}
\newtheorem{lemma}[theorem]{Lemma}
\newtheorem{conjecture}[theorem]{Conjecture}
\theoremstyle{definition}
\newtheorem{definition}[theorem]{Definition}
\newtheorem{notation}[theorem]{Notation}
\theoremstyle{remark}
\newtheorem{remark}[theorem]{Remark}
\newtheorem*{claim*}{Claim}
\DeclareMathOperator{\tp}{\mbox{\sf tp}}
\DeclareMathOperator{\supp}{supp}
\title{Modeling {\sf FO}-limits for monadically stable sequences}
\author{Samuel Braunfeld}
\email{sbraunfeld@iuuk.mff.cuni.cz}
\address[S.~Braunfeld]{Computer Science Institute of Charles University (IUUK), Praha, Czech Republic; and The Czech Academy of Sciences, Institute of Computer Science, Pod Vod\'{a}renskou v\v{e}\v{z}\'{\i} 2, 182 00 Prague, Czech Republic}
\author{Jaroslav Ne{\v s}et{\v r}il}
\email{nesetril@iuuk.mff.cuni.cz}
\address[J.~Ne{\v s}et{\v r}il]{Computer Science Institute of Charles University (IUUK), Praha, Czech Republic}
\author{Patrice Ossona de Mendez}
\email{pom@ehess.fr}
\address[P.~Ossona de Mendez]{Centre d'Analyse et de Math\'ematiques Sociales (CNRS, UMR 8557),
	Paris, France and\\
	Computer Science Institute of Charles University,
	Praha, Czech Republic}
\date{\today}
\thanks{
This paper is part of a project that has received funding from the European Research Council (ERC) under the European Union's Horizon 2020 research and innovation programme (grant agreement No 810115 -- {\sc Dynasnet}). Samuel Braunfeld is further supported by Project 24-12591M of the Czech Science Foundation (GA\v{C}R), and supported partly by the long-term strategic development financing of the Institute of Computer Science (RVO: 67985807).
\\
\includegraphics[width=.2\textwidth]{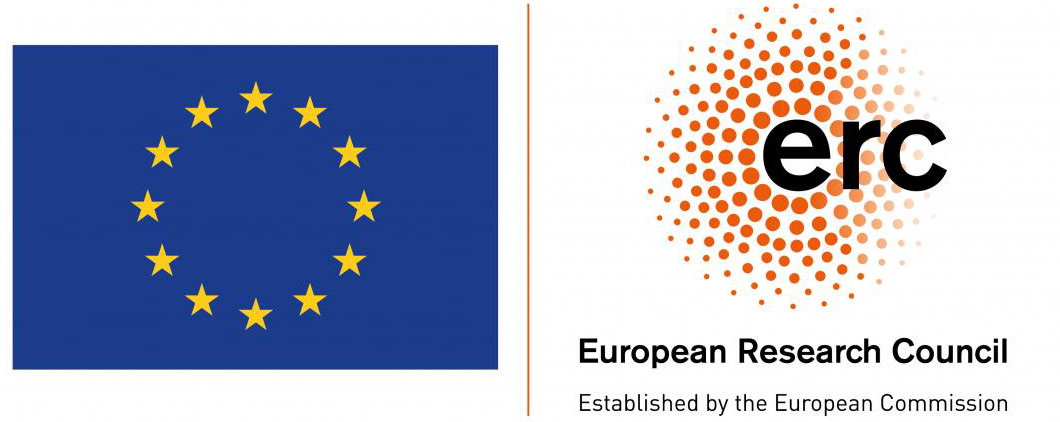}
}
\newcommand{\cqed}{\ensuremath{\lhd}}
\newenvironment{claimproof}{\par
	\pushQED{\cqed}%
	\normalfont \topsep6\p@\@plus6\p@\relax
	\trivlist
	\item\relax
	{\itshape
		Proof of the claim\@addpunct{.}}\hspace\labelsep\ignorespaces
}{%
	\hfill\popQED\endtrivlist\@endpefalse
}
\begin{document}
\begin{abstract}
  We show that given a monadically stable theory $T$, a sufficiently saturated $\mathbf M \models T$, and a coherent system of probability measures on the $\sigma$-algebras generated by parameter-definable sets of $\mathbf M$ in each dimension, we may produce a totally Borel $\tb \prec \mathbf M$ realizing these measures. Our main application is to prove that every {\sf FO}-convergent sequence of structures (with countable signature) from a monadically stable class admits a modeling limit. As another consequence, we prove a Borel removal lemma for monadically stable Lebesgue relational structures.
\end{abstract}
\maketitle
\section{Introduction}

The study of graph limits, and limits of more general structures, has attracted considerable interest over the last two decades, since the seminal papers  introducing the notions of left and local limits \cite{Aldous2006,borgchaylovas06:_count_graph_homom,Benjamini2001,Elek2007b}.
While the notion of \emph{left limit} is based on homomorphism counting and applies to general graphs (with non-trivial limits when the graphs are dense), the notion of \emph{local limit} was introduced for sequences of graphs with bounded degree. These two frameworks lead to seemingly essentially different analytic limit objects, the \emph{graphons}, which are measurable functions from $[0,1]\times[0,1]$ to $[0,1]$ (up to measure preserving transformations) and \emph{graphings}, which are Borel graphs statisfying the so-called \emph{Mass Transport Principle}.

Several approaches have been considered to extend these frameworks~\cite{backhausz2022action,borgs2019L,kunszenti2019measures,kunszenti2022multigraph,kunszenti2024subgraph,CMUC,Nevsetvril2014}. 
From a model-theoretical point of view, a natural generalization of both left limits and  local limits is the notion of \emph{structural limit} \cite{CMUC}. In this setting, a fragment {\sf X} of first-order logic is fixed, and a sequence $(\mathbf M_n)_{n\in\mathbb N}$ of structures is said to be \emph{{\sf X}-convergent} if, for every formula $\varphi$ in $X$, the probability $\langle\varphi,\mathbf M_n\rangle$ that the formula $\varphi$ is satisfied in $\mathbf M_n$ converges (the free variables of $\varphi$ being interpreted as uniform and independent random  elements of the domain).  

{\sf FO}-convergence (where {\sf FO} denotes the full class of all first-order formulas) is a stronger notion of convergence than both left convergence (equivalent to {\sf QF}-convergence, where {\sf QF} denotes the fragment of quantifier-free formulas) and local convergence (equivalent to ${\sf FO}_1^{\rm local}$-convergence on bounded degree graph sequences, where ${\sf FO}_1^{\rm local}$ denotes the fragment of local formulas with a single free variable).

While the limit of an $\sf X$-convergent sequence can always be represented as a probability distribution over the Stone space dual to the Boolean algebra defined by ${\sf X}$ \cite{limit1}, it can also sometimes be represented by means of an analytic object, called a \emph{modeling}, which has stronger properties than mere Borel structures.
A \emph{totally Borel structure} is a structure whose domain is a standard Borel space, with the property that every definable set is Borel (in the appropriate product space).
A \emph{modeling} $\mathbf M$ is a totally Borel structure equipped with a Borel probability measure $\nu$ on its domain.
Note that, for every modeling $\mathbf M$ and every formula $\varphi$ (with $p$ free variables), the probability $\langle \varphi,\mathbf M\rangle$ of satisfaction of $\varphi$ in $\mathbf M$ is given by
\[\langle \varphi,\mathbf M\rangle=\nu^{\otimes p}(\varphi(\mathbf M))=\idotsint 1_\varphi(v_1,\dots,v_p)\,{\rm d}\nu(v_1)\dots{\rm d}\nu(v_p),\]
where $1_\varphi(v_1,\dots,v_p)=1$ if $\mathbf{M}\models\varphi(v_1,\dots,v_p)$ and $1_\varphi(v_1,\dots,v_p)=0$, otherwise.

Based on these definitions, we say that a modeling $\mathbf L$
is a \emph{modeling {\sf X}-limit} of an {\sf X}-convergent sequence $(\mathbf M_i)_{i\in\mathbb N}$ of structures if, for  every $\varphi\in{\sf X}$, it holds that
\[
\langle\varphi,\mathbf L\rangle=\lim_{i\rightarrow\infty} \langle\varphi,\mathbf M_i\rangle.
\]

A natural problem is to determine when an {\sf X}-convergent sequence of graphs (or structures) admits a modeling {\sf X}-limit. In general, in order to have such a property, we either consider a sequence of graphs from a restricted class of graphs, or we consider a fragment more restricted than ${\sf FO}$ \cite{gajarsky2016first,grzesik2021strong,modeling,MapLim,modeling_jsl,limit1}.

When one considers {\sf FO}-convergence of sequences of graphs taken from a fixed monotone (i.e. closed on subgraphs) class of graphs $\mathscr C$, a solution to this problem was announced in \cite{modeling_jsl}, however the proof contains a gap, see ~\cite{corrigendum_jsl}. (In the next statement, the equivalence of the second and the third item follows from \cite{Adler2013}.)
\begin{theorem}
Let $\mathscr C$ be a monotone class of graphs (or, more generally, of binary structures). Then the following properties are equivalent:
\begin{enumerate}
    \item every {\sf FO}-convergent sequence of graphs in $\mathscr C$ has a modeling {\sf FO}-limit;
    \item the class $\mathscr C$ is nowhere dense;
    \item the class $\mathscr C$ is monadically stable.
\end{enumerate}
\end{theorem}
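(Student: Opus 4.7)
The plan is to handle the three implications of the theorem separately. The equivalence $(2) \iff (3)$ is granted by the cited result of Adler--Adler, so I would focus on the implication $(3) \implies (1)$, which is the main contribution and directly applies the totally Borel realization theorem stated in the abstract, and then appeal to earlier work for the converse $(1) \implies (2)$. The overall strategy for the positive direction is to build a sufficiently saturated monster model of a monadically stable theory that carries measures reflecting the asymptotic densities of the convergent sequence, and then invoke the abstract's theorem to realize those measures inside a totally Borel structure.

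For the implication $(3) \implies (1)$: given an {\sf FO}-convergent sequence $(\mathbf M_i)_{i \in \mathbb N}$ from a monadically stable class $\mathscr C$, fix a non-principal ultrafilter $\mathcal U$ on $\mathbb N$ refining the Fr\'echet filter and form the ultraproduct $\mathbf M := \prod_{\mathcal U} \mathbf M_i$, passing to a sufficiently saturated elementary extension if needed. Monadic stability is preserved under ultraproducts, so $\mathbf M$ satisfies a monadically stable theory $T$. The {\sf FO}-convergence ensures $\lim_{i \to \mathcal U} \langle \varphi, \mathbf M_i \rangle = \lim_i \langle \varphi, \mathbf M_i \rangle$ for every parameter-free $\varphi$, and for a parameter-definable set $\varphi(\bar x, \bar b)$ with $\bar b = [(\bar b_i)] \in \mathbf M$ the ultrafilter limit $\lim_{i \to \mathcal U} \langle \varphi(\bar x, \bar b_i), \mathbf M_i \rangle$ yields a finitely additive assignment on parameter-definable subsets of $\mathbf M^p$ in every dimension $p$. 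The key technical step is to promote this assignment into a coherent system of countably additive Borel probability measures on the $\sigma$-algebras generated by parameter-definable sets; this is where monadic stability is essential, as definability of types and combinatorial control on parameter-definable sets let the finitely additive content extend consistently and cohere across dimensions under coordinate projections and permutations. Once this coherent system is in place, the main theorem from the abstract yields a totally Borel $\mathbf B \prec \mathbf M$ realizing those measures, and by construction $\langle \varphi, \mathbf B \rangle = \lim_i \langle \varphi, \mathbf M_i \rangle$ for every $\varphi \in {\sf FO}$, so $\mathbf B$ is the desired modeling {\sf FO}-limit.

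For the converse $(1) \implies (2)$, I would invoke the already-established negative direction: if $\mathscr C$ is somewhere dense, then it contains the $r$-subdivisions of arbitrarily large cliques for some fixed $r$, and from such subdivided-clique configurations one constructs an {\sf FO}-convergent sequence in $\mathscr C$ whose limit type distribution cannot be carried by any totally Borel structure. This direction was already established in \cite{modeling_jsl}, and the gap noted in \cite{corrigendum_jsl} concerns only the positive direction, so it transfers without modification. The main obstacle overall is the construction of the coherent countably additive measure system needed to invoke the abstract's theorem: the $\sigma$-algebras generated by parameter-definable sets are much richer than the Stone space of types, so countable additivity is not automatic, and coherence across dimensions requires pushforwards under projections and permutations to match. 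Monadic stability is precisely the condition that makes this compatible — without it, type spaces can carry orders that obstruct the existence of any coherent measure system, which is exactly why the theorem pinpoints monadic stability.
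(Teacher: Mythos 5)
Your overall architecture is the same as the paper's: delegate $(2)\Leftrightarrow(3)$ to \cite{Adler2013}, delegate $(1)\Rightarrow(2)$ to the (unaffected) negative direction of \cite{modeling_jsl}, and prove $(3)\Rightarrow(1)$ by taking an ultraproduct of the convergent sequence, equipping it with limit measures, and invoking Theorem~\ref{thm:tb}; this is exactly how the paper derives the statement via Theorem~\ref{thm:main}. However, your account of the measure-theoretic step contains a genuine gap. You declare that promoting the finitely additive ultralimit densities to a coherent system of countably additive measures is ``the key technical step'' and that monadic stability is what makes it possible. That is not how it works, and as described you could not complete it: monadic stability plays no role there. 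The countably additive measures are simply the Loeb measures of the ultraproduct \cite{loeb1975conversion} -- countable additivity on the algebra of internal (in particular parameter-definable, by \L{}o\'s) sets comes from the countable incompleteness of the ultrafilter/saturation of the ultraproduct, and the Fubini property across dimensions is a known property of these measures \cite{CMUC}. So the step is true, but for standard nonstandard-analysis reasons, not for the stability-theoretic reasons you propose; a proof built on ``definability of types makes the finitely additive content extend'' would not go through as stated.

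The place where a stability tool genuinely enters at the measure level is the one you gloss with ``passing to a sufficiently saturated elementary extension if needed.'' Theorem~\ref{thm:tb} requires a $\cont$-saturated model, and the ultraproduct $\UU$ over a countable index set is only guaranteed to be $\aleph_1$-saturated (absent CH), so one must pass to some $\cont$-saturated $\UU^+\succ\UU$ and transport the Loeb measures there. Your finitely additive assignment is defined only for parameters of the form $\bar b=[(\bar b_i)]$, i.e.\ parameters from $\UU$ itself, and says nothing about parameter-definable subsets of $\UU^+$. The paper handles this by setting $m_k^+(X)=m_k(X\cap\UU^k)$ and using definability of types (a stability fact) to see that a set parameter-definable in $\UU^+$ meets $\UU^k$ in a set parameter-definable in $\UU$, so that $m_k^+$ is defined and still has the Fubini property. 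Without this (or an argument that the ultraproduct itself is already sufficiently saturated), your invocation of Theorem~\ref{thm:tb} does not yet apply. With these two corrections your sketch becomes the paper's proof; the remaining citations you use for $(2)\Leftrightarrow(3)$ and $(1)\Rightarrow(2)$ are exactly the ones the paper relies on.
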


In this paper, we use a different approach to address the problem of the existence of a modeling limit, i.e. $(2) \Rightarrow (1)$ in the above Theorem. In fact we prove a more general result. Note that  moving from the setting of monotone classes to hereditary classes, monadic stability becomes a more general notion than nowhere denseness. 

The main tool we develop here is a way to embed a modeling in any sufficiently saturated model of a monadically stable theory equipped with a coherent system of measures on parameter-definable sets in every dimension (see 
Section~\ref{sec:mes struc}), as stated in the next theorem.

\begin{restatable}{theorem}{ThmTB}
	\label{thm:tb}
	Let $T$ be a monadically stable theory in a countable language, and let $\mathbf M$ be a measurable $\cont$-saturated model of $T$.
	
	Then, there exists a modeling $\tb$ and an elementary embedding $\zeta:\tb\hookrightarrow\mathbf M$, such that for every formula $\varphi(\bar x;\bar y)$ and every $\bar b\subset \tb$, the measure of $\varphi(\tb,\bar b)$ equals the measure
	of $\varphi(\mathbf M,\zeta(\bar b))$.
\end{restatable}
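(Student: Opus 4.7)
The plan is to construct $\tb$ by a transfinite enrichment producing an elementary substructure of $\mathbf M$ of cardinality $\cont$, whose domain is equipped with a standard Borel structure and a Borel probability measure compatible with the measurable structure of $\mathbf M$. I would fix a countable $\tb_0 \prec \mathbf M$ and build $\tb_\alpha \prec \mathbf M$ for $\alpha < \omega_1$: at successor stages, for each formula $\varphi(\bar x; \bar y)$ and each tuple $\bar b \in \tb_\alpha$, use $\cont$-saturation to realize, inside $\mathbf M$, sufficiently many types over $\tb_\alpha$ so that the distribution of realizations inside $\tb_{\alpha+1}$ captures the prescribed measure of $\varphi(\mathbf M, \bar b)$; at limit stages I take unions. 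Tarski--Vaught yields elementarity, and the final $\tb = \bigcup_{\alpha<\omega_1}\tb_\alpha$ has cardinality $\cont$ and can be identified with a Polish space via an enumeration.

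The Borel structure on $\tb$ I would take to be the $\sigma$-algebra generated by all parameter-definable sets, and the probability measure $\nu$ is transported from $\mathbf M$ along $\zeta$. The essential property to establish is \emph{total Borelness}: that this $\sigma$-algebra is standard Borel and that each $\varphi(\tb, \bar b)$ is Borel of the correct measure. This is where monadic stability plays its decisive role. Interpreting the inclusion $\tb \subset \mathbf M$ as naming a unary predicate, monadic stability of $T$ ensures that the expansion $(\mathbf M, \tb)$ remains stable, so types over $\tb$ are uniformly definable; iterating the argument over the stages $\tb_\alpha$, the defining schemes organize the parameter-definable sets of $\tb$ inside a countably generated Boolean subalgebra, which yields the desired standard Borel structure and makes each $\varphi(\tb, \bar b)$ Borel.

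The main obstacle I foresee is twofold. First, the measure must agree \emph{exactly} with the coherent system, not merely in the limit --- this requires a delicate back-and-forth calibration that adds neither too many nor too few realizations at each stage, most subtle when types have small but positive measure, and which must remain consistent with the coherence conditions relating measures in different dimensions. Second, one must verify that at limit stages the generated $\sigma$-algebra coincides with the union of the earlier $\sigma$-algebras and that the standard Borel structure persists --- here monadic stability is again crucial, as it bounds the complexity of the defining schemes over $\tb_\alpha$ uniformly in $\alpha$ and prevents uncontrolled Borel sets from materializing at limit stages. Once these technical points are settled, elementarity of $\zeta$ and the measure-preservation conclusion follow directly from the way realizations were inserted.
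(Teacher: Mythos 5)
Your proposal has genuine gaps at the three places where the paper's proof does its real work. First, total Borelness: you claim that definability of types lets you organize the parameter-definable subsets of $\tb$ into a countably generated algebra, but definability of types is relative to the parameter set, and over a structure of size $\cont$ there are in general continuum many $\varphi$-types, so nothing bounds the generated $\sigma$-algebra; moreover, a standard Borel structure on the domain is not just an abstract countably generated $\sigma$-algebra --- you need each $\varphi(\tb,\bar b)$ to be Borel in the \emph{product} of a fixed Polish topology on $B$, and an arbitrary elementary substructure obtained by transfinite saturation gives no such topology. The paper gets this only by realizing 1-types (in fact TV-types) over a fixed \emph{countable} $\NN\prec\MM$ in the rigid pattern $\bI\times S\times\omega$, with all realizations pairwise forking-independent; then countability of $S_\varphi(T)$ over $N$ plus stationarity makes every definable set a countable Boolean combination of clopen sets of $S$ and Borel subsets of $\bI$. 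Without the independence arrangement your $\tb$ has no reason to be totally Borel.

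Second, the measure: you cannot ``transport $\nu$ from $\MM$ along $\zeta$,'' because $\zeta(\tb)$ is a continuum-sized subset of $\MM$ that is typically non-measurable or null (e.g.\ for Loeb measures), so restriction/pushforward yields nothing. The paper instead pushes $m_1$ forward to a measure $\mu_1$ on the type space $S_1(T)$ over $\NN$, pulls it back to $S\subset S_{\sf TV}(T)$ along the projection $\proj$ using a nontrivial measurable-selection theorem (Bogachev), and combines it with Lebesgue measure on $\bI$. Third, and most importantly, matching the measures in all dimensions is not achieved by ``calibrating'' how many realizations you add --- an atomless distribution cannot be matched by counting realizations, and you offer no mechanism for the coherence across dimensions, which you yourself flag as the main obstacle. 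The paper's key idea here is the choice of $\NN$ (Lemma 5.2: \emph{dismantling}), so that each forking class over $N$ is null; then almost every $k$-tuple in both $\MM$ and $\tb$ is forking-independent over $N$, and stationarity says the $k$-type of such a tuple is determined by its coordinates' 1-types, so correctness in dimension one lifts automatically to all dimensions (Corollary 6.4). This use of monadic stability (total triviality and transitivity of forking on singletons, making forking an equivalence relation) is quite different from the role you assign it, and it is the missing idea without which your construction cannot be completed.
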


This theorem is not inherently about modeling limits, and applies also to measures that do not arise from the pseudofinite counting measure on a sequence of finite structures. However, our main application of this theorem is a proof of the existence of modeling {\sf FO}-limits in the more general setting of monadically stable classes. Particularly, this implies and generalizes Theorem 1.1.

\begin{restatable}{theorem}{ThmMain}
\label{thm:main}
Let $\sigma$ be a countable signature, and 
let $\mathscr C$ be a monadically stable class of $\sigma$-structures. Then, every {\sf FO}-convergent sequence of $\sigma$-structures in $\mathscr C$ has a modeling {\sf FO}-limit.
\end{restatable}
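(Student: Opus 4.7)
The plan is to encode the {\sf FO}-convergent sequence $(\mathbf M_i)_{i\in\mathbb N}$ as a single monadically stable, measurable, $\cont$-saturated structure, and then apply \Cref{thm:tb} to extract a modeling limit. I would first form the ultraproduct $\mathbf M=\prod_{\mathcal U}\mathbf M_i$ along a countably incomplete $\cont^+$-good ultrafilter $\mathcal U$ on an index set of size $\cont$ (re-indexing the original sequence as needed), which by a standard argument is $\cont$-saturated. For monadic stability: since $\mathscr C$ is monadically stable, for each formula $\varphi$ in the language expanded by finitely many unary predicates there is a uniform bound $n_\varphi$ on the length of a $\varphi$-order across all monadic expansions of members of $\mathscr C$. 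Any alleged $\varphi$-order of length $n_\varphi$ in an arbitrary monadic expansion $(\mathbf M,\vec P)$ involves only finitely many elements, so the relevant part of $\vec P$ may be represented as an ultraproduct of finite predicates $\vec P_i\subset \mathbf M_i$; {\L}o\'s then produces a $\varphi$-order of length $n_\varphi$ in some $(\mathbf M_i,\vec P_i)$, contradicting the bound. Hence $\Th(\mathbf M)$ is monadically stable.

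Next, I would equip $\mathbf M$ with the pseudofinite counting measure: for $\varphi(\bar x;\bar y)$ and a tuple $\bar b\in\mathbf M^{|\bar y|}$ with coordinate representatives $\bar b^{(i)}$, set
$$\mu(\varphi(\mathbf M;\bar b))=\lim_{i\to\mathcal U}\frac{|\varphi(\mathbf M_i;\bar b^{(i)})|}{|\mathbf M_i|^{|\bar x|}}\in[0,1].$$
By {\L}o\'s this depends only on $\varphi$ and $\bar b$, and Loeb's construction extends it to a coherent Borel probability measure on the $\sigma$-algebra of $\bar b$-definable sets in every dimension, with the Fubini-type compatibilities needed to make $\mathbf M$ a measurable structure in the sense of \Cref{sec:mes struc}. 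Crucially, for a parameter-free $\varphi$ the ultralimit coincides with the ordinary limit of a convergent sequence, so $\mu(\varphi(\mathbf M))=\lim_{i\to\infty}\langle\varphi,\mathbf M_i\rangle$.

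Finally, I apply \Cref{thm:tb} to $\mathbf M$, obtaining a modeling $\tb$ with an elementary embedding $\zeta:\tb\hookrightarrow\mathbf M$ that preserves the measure of every parameter-definable set. For every parameter-free $\varphi$,
$$\langle\varphi,\tb\rangle=\mu(\varphi(\tb))=\mu(\varphi(\mathbf M))=\lim_{i\to\infty}\langle\varphi,\mathbf M_i\rangle,$$
so $\tb$ is a modeling {\sf FO}-limit of $(\mathbf M_i)_{i\in\mathbb N}$.

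The main obstacle I foresee is verifying that the pseudofinite measure on $\mathbf M$ matches exactly the axiomatic notion of ``coherent system of measures on parameter-definable sets'' required by \Cref{thm:tb}: dimension-wise $\sigma$-additivity is built into Loeb's construction, but the cross-dimensional Fubini-type compatibilities and invariance under parameter equivalences still need to be checked against the precise definition in \Cref{sec:mes struc}. By comparison, the transfer of monadic stability and the $\cont$-saturation are routine given the finite-witness reduction above and standard good-ultrafilter constructions.
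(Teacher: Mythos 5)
Your overall route is the same as the paper's: form an ultraproduct of the sequence, equip it with the Loeb measures so that parameter-free formulas receive the limit densities, observe that the ultraproduct is monadically stable, and feed the resulting measurable saturated model to \Cref{thm:tb}. The one genuine difference is how you obtain $\cont$-saturation: you re-index the sequence over an index set of size $\cont$ and use a countably incomplete $\cont^+$-good ultrafilter, whereas the paper takes the ordinary ultraproduct $\UU$ over $\omega$ (which is only $\aleph_1$-saturated) and then passes to a $\cont$-saturated elementary extension $\UU^+\succ\UU$, extending the measures by $m_k^+(X)=m_k(X\cap \UU^k)$ and using definability of types to see that $X\cap\UU^k$ is parameter-definable in $\UU$. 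Your device is legitimate and lets you skip that extension step, at the price of two points you should make explicit. First, the re-indexing map $f\colon I\to\omega$ must be chosen so that the pushforward $f_*\mathcal U$ is non-principal on $\omega$ (e.g.\ $f(i)=\max\{n : i\in I_n\}$ for a chain $I=I_0\supseteq I_1\supseteq\cdots$ with empty intersection witnessing countable incompleteness); otherwise the ultralimit of $\langle\varphi,\mathbf M_{f(i)}\rangle$ need not equal the limit of the convergent sequence. Second, the Loeb-measure and Fubini facts the paper imports from \cite{CMUC} are stated for $\omega$-indexed ultraproducts of finite structures; you acknowledge that the Fubini-type compatibilities must be checked, and indeed they do generalize to a countably incomplete ultrafilter on an index set of size $\cont$, but this needs to be verified or cited rather than attributed to ``Loeb's construction'' alone.

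One step of your write-up is flawed as stated, although harmlessly so: the argument that the ultraproduct is monadically stable. If $\varphi$ has quantifiers, the truth of $\varphi(a_j;a_k)$ in a monadic expansion $(\mathbf M,\vec P)$ is not determined by the restriction of $\vec P$ to the finitely many elements of the alleged $\varphi$-order, so replacing $\vec P$ by an ultraproduct $\vec P'$ of finite predicates can change those truth values, and {\L}o\'s does not transfer the long order to a factor $(\mathbf M_i,\vec P_i)$. Fortunately no such argument is needed: with the paper's definition of a monadically stable class, {\L}o\'s gives $\mathbf M\models T_{\mathscr C}$, and every unary expansion of a model of $T_{\mathscr C}$ is stable by definition, so $\Th(\mathbf M)$ is monadically stable outright; this is exactly the one-line step the paper takes. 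With these repairs your proof is correct and essentially parallels the paper's, differing only in the saturation device described above.
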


As a further product of our construction, we obtain the following ``removal lemma''.
\begin{restatable}[Borel removal lemma for monadically stable structures]{theorem}{ThmRemoval}
\label{thm:removal}
Let $\mathbf L$ be a monadically stable Lebesgue relational structure (a structure on the standard probability space $[0,1]$, with all atomic relations Lebesgue measurable) in a countable language.
Then, there exists a Borel structure $\tb$, such that, for every quantifier-free formula $\varphi$, with $\varphi\rightarrow\bigwedge_{i<j<k}(x_i\neq x_j)$  we have
\begin{align*}
\langle\varphi,\tb\rangle&=\langle\varphi,\mathbf L\rangle, \text{ and}\\
\varphi(\tb)\neq\emptyset&\Rightarrow \langle\varphi,\tb\rangle>0.
\end{align*}
\end{restatable}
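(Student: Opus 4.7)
The plan is to realize $\tb$ as a modeling {\sf FO}-limit of a random sampling sequence from $\mathbf L$, arranged so that no Lebesgue-null quantifier-free configuration with pairwise distinct variables is ever realized along the sequence.

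I would first sample. For each $n$, draw $n$ independent uniformly-distributed points from $[0,1]$ and let $\mathbf L_n$ be the substructure of $\mathbf L$ induced on them. For any parameter-free quantifier-free $\varphi(\bar x)$ of arity $k$ that implies pairwise distinctness of its free variables and satisfies $\langle\varphi,\mathbf L\rangle=0$, the expected number of realizations of $\varphi$ in $\mathbf L_n$ is $n(n-1)\cdots(n-k+1)\cdot\langle\varphi,\mathbf L\rangle=0$; so almost surely $\varphi$ is not realized in $\mathbf L_n$, and a countable union over all such $\varphi$ and all $n$ produces a single almost-sure event ensuring that no null-measure parameter-free quantifier-free configuration with distinct variables is ever realized in any $\mathbf L_n$. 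Simultaneously, by the law of large numbers for $U$-statistics, for every parameter-free quantifier-free $\varphi$ with distinct variables $\langle\varphi,\mathbf L_n\rangle\to\langle\varphi,\mathbf L\rangle$ almost surely. I fix a sample path on which both properties hold.

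Next, I would pass to an {\sf FO}-convergent subsequence of $(\mathbf L_n)$, which exists by compactness of the Stone space of {\sf FO}-types (equivalently by diagonal extraction over the countably many formulas). The class of finite substructures of the monadically stable $\mathbf L$ is hereditary and inherits monadic stability with uniform bounds, so the hypothesis of Theorem~\ref{thm:main} is met and applying it yields a modeling {\sf FO}-limit $\tb$.

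Both clauses of the theorem now follow by unwinding the definition of modeling {\sf FO}-limit. For any admissible $\varphi$, $\langle\varphi,\tb\rangle=\lim_n\langle\varphi,\mathbf L_n\rangle=\langle\varphi,\mathbf L\rangle$, giving the first clause. If moreover $\varphi(\tb)\neq\emptyset$, then the sentence $\exists\bar x\,\varphi(\bar x)$ is true in $\tb$, hence by {\sf FO}-convergence it is true in $\mathbf L_n$ for all sufficiently large $n$ in the chosen subsequence; but by our sampling choice no null-measure such $\varphi$ is realized in any $\mathbf L_n$, whence $\langle\varphi,\mathbf L\rangle>0$ and consequently $\langle\varphi,\tb\rangle=\langle\varphi,\mathbf L\rangle>0$. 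The main obstacle I expect is checking that the hereditary class of finite substructures of $\mathbf L$ genuinely satisfies the class-level monadic-stability hypothesis of Theorem~\ref{thm:main} with \emph{uniform} stability-theoretic bounds, which reduces to a transfer argument showing that monadic stability of a single structure $\mathbf L$ restricts uniformly to its finite induced substructures; the probabilistic sampling step and the extraction of an {\sf FO}-convergent subsequence are then standard.
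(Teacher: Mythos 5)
Your proposal is correct, but it reaches the crucial positivity clause by a genuinely different route than the paper. The paper first alters the relations of $\mathbf L$ on null sets to obtain a Borel structure (harmless for every density $\langle\varphi,\mathbf L\rangle$), cites \cite[Lemma~5]{QFTSL} for left-convergence of a randomly sampled sequence from ${\rm Age}(\mathbf L)$ and \cite[Proposition 2.5]{braunfeld2022existential} for monadic stability of the age, applies Theorem~\ref{thm:main}, and then --- this is the main divergence --- proves an \emph{intrinsic} removal property of the constructed modeling (Lemma~\ref{lem:rem1}): any realization of $\varphi$ by distinct points of the ``generic part'' $\bI\times S\times\{0\}$ already forces $\langle\varphi,\tb\rangle>0$, via definability of types and the fact that $S$ was built over $\supp(\mu_1)$; the final Borel structure is the substructure $\tb'$ induced on $\bI\times S\times\{0\}$. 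You instead push the work into the sampling itself: almost surely no quantifier-free configuration of zero density in $\mathbf L$ is ever realized among the sampled points, and a realization in the modeling limit transfers back to the finite samples because the sentence $\exists\bar x\,\varphi$ has limit value $1$, hence holds in $\mathbf L_{n}$ along a tail of the chosen subsequence; this lets the full modeling limit serve as $\tb$, with no need for Lemma~\ref{lem:rem1} or the substructure step. What each approach buys: yours is more elementary and works for \emph{any} modeling {\sf FO}-limit of the sampled sequence, but it relies essentially on the sequence being a random sample of $\mathbf L$; the paper's Lemma~\ref{lem:rem1} is a property of the constructed limit object itself, independent of which convergent sequence (or, via Theorem~\ref{thm:tb}, which measurable model) produced it, and yields a limit structure in which every realized configuration whatsoever has positive density. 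The step you flag as the main obstacle --- that ${\rm Age}(\mathbf L)$ is monadically stable in the class sense --- is genuinely needed but is exactly the cited \cite[Proposition 2.5]{braunfeld2022existential}, so it is a citation rather than a gap; the measure-theoretic care needed to sample a merely Lebesgue structure, which you handle directly with the law of large numbers for $U$-statistics, is handled in the paper by the initial null-set modification to a Borel structure.
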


This theorem is motivated by the following conjecture.
\begin{conjecture}[Lov\' asz, Szegedy]
For every Lebesgue graph $\mathbf L$ (a graph on the standard probability space $[0,1]$ whose edge set is Lebesgue-measurable) there exists a Borel graph $\tb$, such that for every finite graph $F$, the density of $F$ in $\mathbf L$ and $\tb$ are equal, and every finite induced subgraph of $\tb$ has a positive density in $\tb$.
\end{conjecture}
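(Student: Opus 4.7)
The plan is to derive the theorem from Theorem~\ref{thm:main} by a sampling argument. Let $\nu$ denote Lebesgue measure on $[0,1]$. Draw i.i.d.\ uniform points $x_1, x_2, \ldots$, and let $\mathbf M_N$ be the substructure of $\mathbf L$ induced on $\{x_1,\dots,x_N\}$. Since $\mathbf L$ is monadically stable, so is each of its induced substructures, so the sequence $(\mathbf M_N)$ lies in a monadically stable class.

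Two almost sure facts drive the argument, each obtained by handling countably many events at once. By the strong law for U-statistics, for every quantifier-free $\varphi$ with $k$ free variables, $\langle \varphi, \mathbf M_N\rangle \to \nu^{\otimes k}(\varphi(\mathbf L)) = \langle \varphi, \mathbf L\rangle$ almost surely. Second, whenever $\varphi\to\bigwedge_{i<j}(x_i\neq x_j)$ and $\langle\varphi, \mathbf L\rangle = 0$, for each distinct index tuple in $[N]^k$ the probability that the sampled values satisfy $\varphi$ equals $\nu^{\otimes k}(\varphi(\mathbf L)) = 0$; summing over the countably many such tuples and quantifier-free $\varphi$ (the signature being countable), almost surely no $\mathbf M_N$ realizes any zero-measure quantifier-free $\varphi$. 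Fix a sample in this almost sure event, and by a Cantor diagonal over the countably many {\sf FO}-formulas extract an {\sf FO}-convergent subsequence $(\mathbf M_{N_k})$.

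Theorem~\ref{thm:main} applied to $(\mathbf M_{N_k})$ then yields a modeling {\sf FO}-limit $\tb$. The first conclusion is immediate from quantifier-free convergence: $\langle \varphi, \tb\rangle = \lim_k \langle \varphi, \mathbf M_{N_k}\rangle = \langle \varphi, \mathbf L\rangle$ for every quantifier-free $\varphi$. For the second, suppose $\varphi$ is a quantifier-free formula implying distinctness with $\varphi(\tb)\neq\emptyset$. Then $\tb \models \exists \bar x\,\varphi$; since $\exists \bar x\,\varphi$ is an {\sf FO}-sentence, $\lim_k \langle \exists \bar x\,\varphi, \mathbf M_{N_k}\rangle = 1$. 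Were $\langle \varphi, \mathbf L\rangle = 0$, our chosen event would force $\mathbf M_{N_k} \not\models \exists \bar x\,\varphi$ for every $k$, contradicting this limit. Hence $\langle \varphi, \mathbf L\rangle > 0$, and the first conclusion gives $\langle \varphi, \tb\rangle > 0$. The only delicate step is arranging both almost sure properties on a single sample; thereafter {\sf FO}-compactness and Theorem~\ref{thm:main} do the remaining work.
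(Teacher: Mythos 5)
The statement you were asked to prove is the Lov\'asz--Szegedy conjecture for \emph{arbitrary} Lebesgue graphs; the paper does not prove it --- it appears only as motivation, and what the paper actually establishes is the monadically stable analogue, \cref{thm:removal}. Your argument silently inserts the hypothesis ``$\mathbf L$ is monadically stable'' in its first sentence, and that hypothesis is load-bearing: without it you cannot invoke \cref{thm:main}, which requires the sampled finite structures to come from a monadically stable class, and modeling {\sf FO}-limits genuinely fail to exist outside that setting (cf.\ Theorem~1.1: for monotone classes their existence is \emph{equivalent} to nowhere denseness/monadic stability). So the proposal does not prove the stated conjecture; at best it proves the special case corresponding to \cref{thm:removal}. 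Even for that case, the step ``since $\mathbf L$ is monadically stable, so is each of its induced substructures, so the sequence lies in a monadically stable class'' is a non sequitur: every finite structure is individually (monadically) stable, so this observation carries no information --- monadic stability of a \emph{class} is a property of the common theory $T_{\mathscr C}$ (Definition~\ref{def:stable}), and the class of finite linear orders shows the inference fails in general. What you actually need is that ${\rm Age}(\mathbf L)$ is a monadically stable class because $\mathbf L$ is, which is precisely the nontrivial fact the paper imports (\cite[Proposition 2.5]{braunfeld2022existential}) in its proof of \cref{thm:removal}.

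Granting monadic stability and that citation, the rest of your argument tracks the paper's proof of \cref{thm:removal} (random sampling of $\mathbf L$, pass to an {\sf FO}-convergent subsequence, apply \cref{thm:main}), but diverges on the positivity clause, and your variant is actually a sound shortcut: the paper analyzes the internal structure of the constructed modeling via \cref{lem:rem1} (definability of types and the support of $\mu_1$) and then passes to the Borel substructure on $\bI\times S\times\set{0}$, whereas you fix a single almost-sure sample on which no null quantifier-free configuration with distinct points is ever realized, and then use the modeling limit property applied to the sentence $\exists\bar x\,\varphi$ to conclude that any realization in $\tb$ forces realization in the samples, hence positive density in $\mathbf L$. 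This avoids \cref{lem:rem1} and shows the full modeling $\tb$ already works, at the price of the probabilistic bookkeeping (which you handle correctly, modulo the standard measurability/U-statistics details the paper instead delegates to a null-set modification and \cite[Lemma~5]{QFTSL}). But none of this closes the gap in the first paragraph: as written, your approach cannot establish the conjecture itself, only its monadically stable restriction.
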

The original conjecture was expressed in terms of random-free graphons (``Every random-free graphon $W$ is equivalent to a Borel graph $\mathbf G$ such that every finite induced subgraph of $\mathbf G$ has a positive density in $W$''). Note that the conjecture is known to hold if one only requires $\tb$ being Lebesgue.  

\subsection{Proof sketch and structure of the paper}
We now roughly outline the proofs of Theorems \ref{thm:tb} and \ref{thm:main}.

In order to construct the modeling $\mathbf B$ in Theorem \ref{thm:tb}, we assume that the given measurable model $\mathbf M$ has a distinguished countable elementary submodel $\mathbf N$ (to be chosen later), and concern ourselves with getting the right measure on $\NN$-definable subsets of $\mathbf M$. The measure in $\mathbf M$ on such sets induces a measure on the space $S_1(\mathbf N)$ of 1-types over $\mathbf N$. Our first goal is to build the totally Borel $\mathbf B$ over $\mathbf N$ such that it contains a Borel set $A$ realizing every 1-type over $\mathbf N$, and so that all these realizations are forking-independent. We then aim to transport the measure on $S_1(\mathbf N)$  to $A$, where all of the measure of $\mathbf B$ will be concentrated; this is complicated by the fact that the topology on $A$ is finer than on $S_1(\NN)$, but can be done as a consequence of a general selection theorem. 

At this point, we have ensured that $\mathbf B$ gives the right measure to definable sets in dimension one, and we now want to argue that this ensures us to give the right measure to definable sets in all dimensions. This is where the choice of the countable $\mathbf N \prec \mathbf M$ becomes relevant. The structure theory for monadic stability shows that over any set, forking-dependence defines an equivalence relation on singletons. We show that we can choose $\mathbf N$ so that each class of this equivalence relation over $\mathbf N$ has measure 0 in $\mathbf M$. Thus, in $\mathbf M$, a random $k$-tuple will almost surely consist of points that are forking-independent over $\mathbf N$. The same is true in $\mathbf B$, since we have concentrated all our measure on the forking-independent set $A$. 

Stability theory (in particular, stationarity of types over models) then shows that if we have a $k$-tuple that is forking-independent over $\mathbf N$, the relations that hold of it are determined by the 1-types over $\mathbf N$ of the individual points. Since we have given the correct measure to 1-types, this shows we must also give the correct measure to $k$-types, and thus to definable sets in dimension $k$, finishing the proof of \ref{thm:main}.

In order to prove Theorem \ref{thm:main}, we apply Theorem \ref{thm:tb} to the case where $\mathbf M$ is obtained as an ultraproduct of the given {\sf FO}-convergent sequence, equipped with the Loeb measure.

After general preliminaries in Section \ref{sec:prelim} and further preliminaries about type spaces in Section \ref{sec:Stone}, we show that we can build the totally Borel model nicely realizing all 1-types over any countable $\mathbf N \prec \mathbf M$ in Section \ref{sec:TB}. Then we show that we can choose $\mathbf N \prec \mathbf M$ as desired in Section \ref{sec:dismantle}. Finally, in Section \ref{sec:transfert}, we transport the measure from $\MM$ to $\tb$ and show that giving the right measures in dimension one lifts to all dimensions. This proves Theorem \ref{thm:tb}, and Theorems \ref{thm:main} and \ref{thm:removal} quickly follow. Further conjectures are given in Section \ref{sec:conc}.

\subsection{Acknowledgments}
We thank Chris Laskowski for helpful discussion about the construction of the totally Borel model in Section \ref{sec:TB}.

\section{Preliminaries}
\label{sec:prelim}
\subsection{Measure theory}

Given a measure space $(X,\Sigma,\mu)$,
a \emph{$\mu$-null set} $X$ is a $\mu$-measurable subset of $X$ with $\mu$-measure $0$.

Given two measurable spaces $(X_1,\Sigma_1)$
and $(X_2,\Sigma_2)$, a measurable mapping $f:X_1\rightarrow X_2$, and a measure 
$\mu:\Sigma_1\rightarrow [0,+\infty]$, the \emph{pushforward} of $\mu$ by $f$ is the measure $f_*(\mu)$ defined by
\[
f_*(\mu)(X)=\mu(f^{-1}(X)),
\]
for all $X\in\Sigma_2$.

A \emph{Polish space} $X$ is a  separable completely metrizable topological space.
A measurable space $(X,\Sigma)$
 is  a \emph{standard Borel space} if $X$ is a Polish space and 
$\Sigma$ is the Borel $\sigma$-algebra of $X$. According to Kuratowski's theorem, every standard Borel space is Borel isomorphic to one of $\mathbb R$, $\mathbb Z$, or a finite discrete space.

A \emph{Borel isomorphism} is a measurable bijective function between two standard Borel spaces whose inverse is also measurable. 

\begin{theorem}
    [Lusin-Suslin; see {\cite[Corollary 15.2]{Kechris1995}}]\label{fact:Suslin}
    Every Borel bijection between standard Borel spaces has a Borel inverse, hence is a Borel isomorphism.
\end{theorem}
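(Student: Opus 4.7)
The plan is to reduce the statement to the so-called \emph{image} half of the Lusin--Suslin theorem, namely: if $f\colon X\to Y$ is a Borel injection between standard Borel spaces and $B\subseteq X$ is Borel, then $f(B)\subseteq Y$ is Borel. Granting this, the conclusion is immediate: for any Borel bijection $f$ and any Borel set $B\subseteq X$, we have $(f^{-1})^{-1}(B)=f(B)$, which is Borel by the image theorem, so $f^{-1}$ is Borel measurable and $f$ is a Borel isomorphism.

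To prove the image half, I would first perform a change of topology to reduce to the continuous case. Any standard Borel space can be endowed with a Polish topology generating its Borel $\sigma$-algebra, and a classical refinement lemma (see Kechris, Section~13) shows that, given any Borel map $f\colon X\to Y$ between Polish spaces, the topology on $X$ can be refined to a finer Polish topology with the same Borel sets in which $f$ becomes continuous. Thus it suffices to show that every continuous injection $f\colon X\to Y$ between Polish spaces maps Borel sets to Borel sets. Since any Borel subset $B\subseteq X$ is itself standard Borel under its induced structure, we may further reduce to showing that $f(X)$ is Borel.

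The image $f(X)$ is automatically analytic, so by Souslin's theorem it suffices to show it is coanalytic. Here the key tool is Lusin's separation theorem: any two disjoint analytic sets in a Polish space can be separated by a Borel set. Pick, for each $n$, a partition (or covering) of $X$ by countably many Borel pieces $X^n_k$ of diameter at most $2^{-n}$. By injectivity of $f$, the images $f(X^n_k)$ are pairwise disjoint analytic sets, hence can be separated by pairwise disjoint Borel sets $B^n_k\supseteq f(X^n_k)$. A standard Cauchy-style argument then identifies $f(X)$ with the set of points $y\in Y$ such that for every $n$ there is a unique $k$ with $y\in B^n_k$ and the resulting sequence of $X^n_k$'s converges to a point of $X$ mapped to $y$; this exhibits $f(X)$ as a Borel combination of the $B^n_k$'s, hence as a Borel set.

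The main obstacle is the last step: translating Lusin separation and the geometric diameter decomposition into an actual Borel description of $f(X)$. The continuity and injectivity of $f$, together with completeness of a compatible metric on $X$, are what make the Cauchy argument succeed and ultimately force $f(X)$ to be coanalytic, and hence Borel. Everything else (change of topology, reduction from bijections to images, reduction from Borel sets to the whole space) is routine once this core fact is in hand.
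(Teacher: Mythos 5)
The paper does not prove this statement but simply cites it (Kechris, Corollary 15.2), and your sketch follows exactly the standard proof from that reference: reduce to showing injective Borel images of Borel sets are Borel, change the topology to make $f$ continuous, and then run the Luzin-scheme argument with the Lusin separation theorem and a completeness/Cauchy argument. So your proposal is correct and takes essentially the same approach as the paper's source.
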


 Measurable mappings between standard Borel spaces allow us to pull back a Borel measure, as stated in the next theorem.

\begin{theorem}[{\cite[Theorem 9.1.5]{Bogachev2006}}, stated for standard Borel spaces] \label{thm:Bog}
	Let $X$ and $Y$ be standard Borel spaces and let $f:X\rightarrow Y$ be a Borel mapping such that $f(X)=Y$. Then, for every Borel measure $\nu$ on $Y$, there exists a Borel measure $\mu$ on $X$ such that $\nu=f_*(\mu)$.
	\end{theorem}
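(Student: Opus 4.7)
My plan is to obtain $\mu$ as the pushforward of $\nu$ along a measurable selector for $f$. Since $f:X\to Y$ is Borel with $f(X)=Y$, the graph of $f$ is a Borel subset of $X\times Y$ projecting surjectively onto $Y$, and the Jankov--von Neumann uniformization theorem then provides a section $s:Y\to X$ (that is, $f\circ s=\mathrm{id}_Y$) which is measurable with respect to the $\sigma$-algebra generated by the analytic subsets of $Y$. Given such an $s$, I would define $\mu$ on the Borel $\sigma$-algebra of $X$ by $\mu(B):=\nu(s^{-1}(B))$.

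To see that this definition makes sense, I would invoke the standard fact that analytic sets in a standard Borel space are universally measurable, so every set in the $\sigma$-algebra they generate is $\nu$-measurable; in particular $s^{-1}(B)$ is $\nu$-measurable for every Borel $B\subseteq X$, and countable additivity of $\mu$ is inherited from that of $\nu$ together with the preservation of disjoint unions under preimage. Finally, for any Borel $E\subseteq Y$, the identity $f\circ s=\mathrm{id}_Y$ yields
\[
f_*\mu(E)=\mu(f^{-1}(E))=\nu(s^{-1}(f^{-1}(E)))=\nu((f\circ s)^{-1}(E))=\nu(E),
\]
confirming that $\mu$ is the desired pullback.

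The main obstacle is that a Borel surjection between standard Borel spaces need not admit a \emph{Borel} section at all: \cref{fact:Suslin} applies only when $f$ is bijective, and the Lusin--Novikov theorem produces Borel sections only when the fibres of $f$ are countable. We must therefore work with a weaker selection principle, and the delicate point is matching the descriptive complexity of the available section to what is needed for the definition $\mu:=\nu\circ s^{-1}$ to land inside the Borel $\sigma$-algebra of $X$. Jankov--von Neumann strikes exactly this balance: measurability of $s$ with respect to the $\sigma$-algebra generated by the analytic sets is weak enough to be achievable for an arbitrary Borel surjection, yet strong enough to imply $\nu$-measurability of every Borel preimage, which is all that the argument above requires. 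For a non-finite Borel measure $\nu$ I would reduce to the $\sigma$-finite case by a standard exhaustion, applying the above construction on each piece and summing the resulting measures.
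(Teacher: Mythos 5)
The paper does not prove this statement---it is quoted from Bogachev (Theorem 9.1.5)---and your argument is essentially the standard proof of that cited result: take a Jankov--von Neumann section $s$ of $f$ (measurable for the $\sigma$-algebra generated by the analytic subsets of $Y$), and set $\mu(B):=\bar\nu(s^{-1}(B))$, where $\bar\nu$ is the completion of $\nu$, using that analytic sets are universally measurable. Your write-up is correct; the only cosmetic points are that one should formally work with the completion $\bar\nu$ when evaluating $s^{-1}(B)$, and the closing remark about non-$\sigma$-finite measures is unnecessary for the paper's use, since the theorem is only applied to Borel probability measures.
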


Let $(X,\Sigma)$ be a measurable space, and let $\mu$ be a measure on this space. For a positive integer $k$, we denote by $\Sigma^k$ the \emph{product sigma algebra}, that is the  sigma algebra on $X^k$ generated by the subsets of the form $\prod_{i<k}B_i,$ where $B_i\in\Sigma$ for all $i<k$. The \emph{product measure} $\mu^{\otimes k}$ is defined as a measure on $(X^k,\Sigma^k)$ such that $\mu^{\otimes k}(\prod_{i<k} B_i)=\prod_{i<k}\mu(B_i)$ for all
$B_0,\dots,B_{k-1}\in\Sigma$. If $\mu$ is $\sigma$-finite (in particular, when $\mu$ is a probability measure), then the product measure $\mu^{\otimes k}$ is uniquely defined and, for every measurable $f$ we have (Fubini--Tonelli theorem):
\[
\int f(\bar x)\,{\rm d}\mu^{\otimes k}(\bar x)=\idotsint f(x_0,\dots,x_{k-1})\ {\rm d}\mu(x_0)\dots{\rm d}\mu(x_{k-1}).
\]
\subsection{Model theory} \label{sec:mod th}
Let $\sigma$ be a countable signature.
We define ${\sf FO}[\sigma]$ as the set of all first-order formulas in the language of $\sigma$-structures, where the free variables belong to the set $\set{x_i |  i<\omega}$. For $k\leq\omega$, we further define ${\sf FO}_k[\sigma]$ as the subset of ${\sf FO}[\sigma]$ with all the formulas whose free variables belong to $\set{x_i |  i<k}$. In particular, ${\sf FO}_0[\sigma]$ is the set of all the sentences in the language of $\sigma$-structures, and ${\sf FO}_\omega[\sigma]={\sf FO}[\sigma]$. When the signature $\sigma$ is implicit, we use the simplified notations ${\sf FO}$ and ${\sf FO}_k$ in place of ${\sf FO}[\sigma]$ and ${\sf FO}_k[\sigma]$.

For every $k\leq\omega$, the quotient of ${\sf FO}_k$ by the relation of logical equivalence defines a countable Boolean algebra. The Stone dual of this Boolean algebra is denoted by 
$S({\sf FO}_k)$. The points of  $S({\sf FO}_k)$ are the maximal consistent subsets of formulas in ${\sf FO}_k$, which are called \emph{complete theories} if $k=0$, and a \emph{complete $k$-types} if $k>0$. 
The space $S({\sf FO}_k)$ is endowed with a topology, whose basis is its set of clopen sets (indexed by formulas in ${\sf FO}_k$)
\[
\clop(\varphi):=\set{\tau\in S({\sf FO}_k) |  \varphi\in\tau}.
\]
With this topology, $S({\sf FO}_k)$  is a Polish space (as the signature $\sigma$ is assumed to be countable). Hence, with the associated Borel sigma-algebra, $S({\sf FO}_k)$  is a \emph{standard Borel space}.

For a theory $T$, we define 
\begin{align*}
S_k(T)&=\set{p\in S({\sf FO}_k) |  T\subseteq p}.
\end{align*}
Note that $S_k(T)$ is a closed subset of $S({\sf FO}_k)$ since $S_k(T)=\bigcap_{\varphi\in T}\clop(\varphi)$.

Given a $\sigma$-structure $\mathbf M$, an ordinal $\alpha\leq\omega$, and a tuple $\bar a=(a_i :  i<\alpha)$ of elements of $\mathbf M$, i.e. $\bar a \in M^k$, we define the \emph{type} of $\bar a$ in $\mathbf M$ to be the
set of all formulas $\varphi\in {\sf FO}_\alpha$ such that 
$\mathbf M$ satisfies $\varphi$ when $x_i$ is interpreted as $a_i$ (for all  $i<\alpha$). Given $\MM \models T$ and $k \leq \omega$, we let $\tp^k \colon \MM^k \to S_k(T)$ be the function sending a $k$-tuple of elements of $\MM$ to its $k$-type. In order to simplify notation, we will often omit the $k$, so $\tp(\abar)$ will denote 
$\tp^{|\abar|}(\abar)$.

Also, given a $\sigma$-structure $\MM$, the \emph{theory of $\MM$}, denoted $\Th(\MM)$, is the set of $\sigma$-sentences satisfied by $\MM$.

We say that a substructure $\NN \subset \MM$ is an \emph{elementary substructure}, denoted $\NN \prec \MM$, if for every formula $\phi(\xbar)$ and $\nbar \subset N$, we have $\NN \models \phi(\nbar) \iff \MM \models \phi(\nbar)$. In particular, $\NN \models \Th(\MM)$. The \emph{Tarski-Vaught criterion} says that $\NN \prec \MM$ if, for every formula 
$\phi(x, \ybar)$ and every $\nbar \subset N$, $\MM \models \exists x\ \phi(x, \nbar)$ implies that  there is some $a \in \NN$ such that $\MM \models \phi(a, \nbar)$.

Given a structure $\MM$ and $A \subset M$, we let $\MM_A$ be the expanded structure in which all elements of $A$ are named by constants. Given a cardinal $\kappa$, $\MM$ is \emph{$\kappa$-saturated} if for every $A \subset M$ with $|A| < \kappa$, every type in $S(\Th(\MM_A))$ is realized by some tuple in $\MM_A$.

\subsection{Measurable structures} \label{sec:mes struc}

A \emph{measurable} $\sigma$-structure is a $\sigma$-structure $\mathbf M$ equipped, for each integer $k$, with a sigma algebra $\Sigma_k$ and a probability measure $\nu_k$ on $M^k$, such that 
\begin{itemize}
    \item every definable subset of $M^k$ is $\mu_k$-measurable;
    \item the probability measures $\mu_k$ satisfy a Fubini--Tonelli like property: for every $\mu_k$-measurable function $f$ on $M^k$, and every permutation $\rho$ on $\{0,\dots,k-1\}$ we have
\[
\int f(\bar x)\,{\rm d}\nu_k(\bar x)=\idotsint f(x_0,\dots,x_{k-1})\ {\rm d}\nu_1(x_{\rho(0)})\dots{\rm d}\nu_1(x_{\rho(k-1)}).
\]    
\end{itemize}

An important example of a measurable $\sigma$-structure 
is given by the ultraproduct $\prod_U \mathbf M_n$ of finite structures with uniform probability measure. In this case, the probability measures $\nu_k$ are obtained from the product probability measures of the finite structures using the ultraproduct construction of finite probability measure spaces \cite{loeb1975conversion}. We call these measures the \emph{Loeb measures} of the ultraproduct $\prod_U \mathbf M_n$.

In contrast with this complex structure,
a \emph{totally Borel model} is a $\sigma$-structure, whose domain is a standard Borel space  with the property that every definable set is Borel, and 
a \emph{modeling} $\mathbf M$ is a totally Borel $\sigma$-structure endowed with a Borel probability measure $\nu$. 
Every modeling, equipped with the product Borel sigma algebras and the product probability measures $\nu^{\otimes k}$ is a measurable $\sigma$-structure. As shown in \cite{keisler2001loeb}, an ultraproduct equipped with the Loeb measures is far from being a totally Borel model.

\subsection{Structural convergence}
To a finite $\sigma$-structure $\mathbf M$ and a formula $\psi\in{\sf FO}$, we define $\langle\psi,\mathbf M\rangle$ as the probability that $\mathbf M$ satisfies $\psi$ for an interpretation of the free variables by random elements of $\mathbf M$ chosen uniformly and independently. (In the case where $\psi$ is a sentence, then  $\langle\psi,\mathbf M\rangle$ is $1$ if $\mathbf M$ satisfies $\psi$, and $0$ otherwise.)
For $p\leq\omega$, every finite $\sigma$-structure $\mathbf M$ injectively defines a unique measure $\mu_p^{\mathbf M}$ on $S({\sf FO}_p)$ such that $\mu_p^{\mathbf M}(\clop(\psi))=\langle\psi,\mathbf M\rangle$.

A sequence $(\mathbf M_n)_{n\in\mathbb N}$ of finite $\sigma$-structures is \emph{{\sf FO}-convergent} if, for every $\psi\in{\sf FO}$, the sequence $(\langle\psi,\mathbf M_n\rangle)_{n\in\mathbb N}$ has a limit. 
If $(\mathbf M_n)_{n\in\mathbb N}$ is {\sf FO}-convergent, then for each $p\leq\omega$, the probability measures $\mu_p^{\mathbf M_n}$ converge weakly to a probability measure $\mu_p$ such that, for every $\psi\in{\sf FO}_p$, we have
\[
\mu_p(\clop(\psi))=\lim_{n\rightarrow\infty}\langle\psi,\mathbf M_n\rangle.
\]
This notion was introduced in \cite{CMUC}. 

A \emph{modeling ${\sf FO}$-limit} of an {\sf FO}-convergent sequence $(\mathbf M_n)_{n\in\mathbb N}$ of finite $\sigma$-structures is a modeling $\mathbf L$, such that 
\[
\langle\psi,\mathbf L\rangle=\lim_{n\rightarrow\infty}\langle\psi,\mathbf M_n\rangle.
\]

The existence of modeling ${\sf FO}$-limits has been proved for sequences for several restricted classes of graphs (bounded height trees \cite{limit1}, trees \cite{modeling}, mappings \cite{MapLim},~etc.), and eventually for nowhere dense classes \cite{modeling_jsl}. However, the proof of 
\cite{modeling_jsl} is flawed in the crucial implication $(2) \Rightarrow (1)$, see \cite{corrigendum_jsl}. In this paper, we use different arguments to give a  proof of a generalization of this later result to all monadically stable classes of finite relational structures with countable signature.

\subsection{Stability, monadic stability, and forking} \label{sec:fork}

Here we recall the notions (monadic) stability and of forking independence, our main technical tool. Forking independence gives a notion of independence between two sets over a third, generalizing, for example, linear independence in vector spaces. For another example, in graphs of bounded degree, sets $A$ and $B$ are forking-independent over $C$ if every connected component containing a vertex from both $A$ and $B$ also contains a vertex from $C$. 
Since we will always be working over a model named by constants, the base set $C$ will not explicitly appear in our presentation.

\begin{definition} \label{def:stable}
    A (possibly incomplete) theory $T$ is \emph{unstable} if there is a formula $\phi(\xbar; \ybar)$ and some $\MM \models T$ containing tuples $(\abar_i : i \in \omega), (\bbar_j : j \in \omega)$ such that $\MM \models \phi(\abar_i; \bbar_j) \iff i \leq j$. Otherwise, $T$ is \emph{stable}.

    A theory $T$ is \emph{monadically stable} if every theory obtained by expanding models of $T$ by any number of unary predicates remains stable.

    Given a class $\CC$ of structures in a fixed language, we say that \emph{$\CC$ is (monadically) stable} if $T_\CC = \bigcap_{\MM \in \CC} \Th(\MM)$ is. 
\end{definition}

In the finite combinatorics literature, it is common to define monadically stable classes as those that do not transduce the class $\mathcal{LO}$ of finite linear orders. We will not use this, but define it nevertheless.

\begin{definition}
    Let $\CC$ be a class of structures in a fixed language $\LL$. Let $\LL^+$ be the expansion of $\LL$ by countably many unary predicates, and let $\CC^+$ be the class of all $\LL^+$ structures whose $\LL$-reduct is in $\CC$. We say \emph{$\CC$ transduces $\mathcal{LO}$} if there is some $\LL^+$-formula $\phi(x;y)$ (on singletons) and structures $(\MM_i \in \CC^+ : i \in \omega)$ such that $\MM_i$ contains elements $(a_j^i : j \in [i])$ satisfying $\MM_i \models \phi(a^i_j; a^i_k) \iff j \leq k$.
\end{definition}

It is easy to see that if $\CC$ transduces $\mathcal{LO}$, then it is not monadically stable. The converse follows from \cite[Lemma 4.4]{braunfeld2022existential} and (the proof of) \cite[Lemma 8.1.3]{BS1985monadic}.

We now recall some facts about forking in stable and monadically stable theories that we will need. For these facts, we work in a large saturated model $\UU$ (for us, greater than \cont) of a stable theory $T$, and all sets and models we consider have cardinality less than $\UU$ (at most \cont). We will also only ever consider forking over some fixed countable $\NN \prec \UU$ with all elements named by constants, which will simplify our exposition. We emphasize that even our definition of forking below, which is nonstandard, makes use of our assumption that the constants name a model. A more general exposition can be found e.g. in \cite[Chapters 8-9]{tent2012course}.

\begin{definition}
    Let $\phi(\xbar; \ybar)$ be a partitioned formula (i.e. we have partitioned the free variables into the two tuples $\xbar$ and $\ybar$). Given $\abar \subset U$, the \emph{$\phi$-type of $\abar$}, denoted $\tp_\phi(\abar)$, is the set of formulas $\set{\phi(\xbar; \nbar) | \nbar \in N^{|\ybar|}, \UU \models \phi(\abar; \nbar)}$.

    The space of $\phi$-types, denoted $S_\phi(T)$, is the set of all $\phi$-types realized in $\UU$.

    We also let $\phi^*(\xbar; \ybar) := \phi(\ybar; \xbar)$. Thus the \emph{$\phi^*$-type of $\bbar$}, denoted $\tp_{\phi^*}(\bbar)$, is $\set{\phi(\nbar; \ybar) | \nbar \in N^{|\xbar|}, \UU \models \phi(\nbar; \bbar)}$, and we also have the corresponding space $S_{\phi^*}(T)$.
\end{definition}

\begin{fact} [{Definability of $\phi$-types, see e.g. \cite[Theorem 8.3.1]{tent2012course}}] \label{fact:deftypes}
    Let $T$ be a stable theory. Given a formula $\phi(\xbar; \ybar)$, and a tuple $\abar \in \UU^{|\xbar|}$, there is a formula $\psi(\ybar)$ that is a boolean combination of formulas of the form $\phi(\nbar_i; \ybar)$ with each $\nbar_i \subset N$, such that for all $\nbar' \subset N$ we have $\UU \models \phi(\abar ; \nbar') \leftrightarrow\psi(\nbar')$.
\end{fact}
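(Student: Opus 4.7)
The plan is to prove definability of $\phi$-types via Shelah's local rank $R_\phi$, whose finiteness on formulas is equivalent to $\phi$-stability. For a consistent partial type $\chi(\xbar)$ with parameters in $\UU$, I would define $R_\phi(\chi) \in \mathbb{N} \cup \{\infty\}$ inductively: $R_\phi(\chi) \geq 0$ iff $\chi$ is consistent, and $R_\phi(\chi) \geq n+1$ iff some $\bbar \in \UU^{|\ybar|}$ yields both $R_\phi(\chi \wedge \phi(\xbar, \bbar)) \geq n$ and $R_\phi(\chi \wedge \neg\phi(\xbar, \bbar)) \geq n$.

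The first key step is to show $R_\phi(\xbar = \xbar) < \omega$ using $\phi$-stability, by contrapositive. Infinite rank produces, by iterating the definition, a full binary tree $(\bbar_s)_{s \in 2^{<\omega}}$ of parameters whose nodes carry consistent $\phi$-alternating conjunctions. The $2^{\aleph_0}$ branches then give pairwise distinct complete $\phi$-types over the countable parameter set $\{\bbar_s : s \in 2^{<\omega}\}$, contradicting the standard stability bound $|S_\phi(A)| \leq \aleph_0$ for countable $A$; equivalently, a Ramsey extraction on the tree produces an indiscernible sequence exhibiting the order property, directly contradicting stability.

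With the rank finite, I would then, for the given $\abar$, greedily build parameters $\nbar_1, \dots, \nbar_k \in N$ and signs $\epsilon_i \in \{+,-\}$ so that the conjunction $\theta(\xbar) := \bigwedge_{i \leq k} \phi^{\epsilon_i}(\xbar, \nbar_i)$ is satisfied by $\abar$ and has $R_\phi(\theta) = 0$. At each stage, positive rank of the current conjunction supplies some $\bbar \in \UU$ causing the rank to drop; since ``there exists $\bbar$ causing the rank to drop'' is expressible as a first-order sentence over the previously chosen parameters, the Tarski--Vaught criterion applied to $N \prec \UU$ lets us find $\nbar \in N$ doing the same job. The rank strictly decreases, so the process terminates in at most $R_\phi(\xbar = \xbar)$ steps.

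Finally, rank zero of $\theta$ means $\theta$ decides $\phi(\xbar, \bbar)$ for every $\bbar \in \UU$, so $\theta$ pins down $\tp_\phi(\abar/N)$ completely. To convert this into a boolean combination $\psi(\ybar)$ of the atoms $\phi(\nbar_i, \ybar)$, I would invoke the dual rank $R_{\phi^*}$, also finite by the symmetric form of stability, and apply the analogous greedy argument dually: this extracts further parameters in $N$ so that the finite boolean algebra generated on $\ybar$-instances already separates $\phi(\abar, \cdot)$ from its negation on $N^{|\ybar|}$, and then $\psi$ is the disjunction of those atoms consistent with $\phi(\abar, \ybar)$. The main obstacle is the first step: the combinatorial passage from unbounded rank to the order property, where the full strength of stability is actually deployed. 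All subsequent steps are essentially syntactic bookkeeping enabled by the finiteness of the rank.
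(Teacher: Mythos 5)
The paper offers no proof of this fact (it is quoted from Tent--Ziegler), so the only question is whether your rank-based argument stands on its own, and it has a genuine gap at its central step. You claim you can greedily choose instances $\phi^{\epsilon_i}(\xbar,\nbar_i)$ with $\nbar_i\subset N$, all satisfied by $\abar$, until the conjunction $\theta$ has $R_\phi(\theta)=0$. This is impossible in general. Take $T$ to be the theory of an equivalence relation $E$ with infinitely many infinite classes, $\phi(x;y)=E(x,y)$, $\NN$ a countable model named by constants, and $a\in\UU$ in a class containing no element of $N$. The only instances over $N$ satisfied by $a$ are the negative ones $\neg E(x,n)$, and every finite conjunction $\theta$ of these has $R_\phi(\theta)=1$: it is split by $E(x,c)$ for $c$ in a fresh class, while no instance over $N$ true of $a$ ever lowers the rank, so your process stalls at rank $1$. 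The flaw in your justification is the phrase ``positive rank supplies some $\bbar$ causing the rank to drop'': positive rank supplies a $\bbar$ for which both signed extensions of $\theta$ are consistent, and at least one sign must drop the rank, but that sign need not be the one satisfied by $\abar$; Tarski--Vaught moves the splitting witness into $N$, not a rank-lowering instance true of $\abar$. The standard repair is to take $\theta$ a finite conjunction of $\pm\phi$-instances over $N$ satisfied by $\abar$ of \emph{minimal} rank $r$ (not rank $0$), and to let $\psi_0(\ybar)$ express ``$R_\phi(\theta(\xbar)\wedge\phi(\xbar,\ybar))\ge r$'', which is first-order in $\ybar$ with parameters in $N$ because having rank at least a fixed finite value is definable; minimality of $r$ then gives $\phi(\abar,\nbar')\leftrightarrow\psi_0(\nbar')$ for all $\nbar'\subset N$, since whichever sign $\abar$ satisfies yields again a conjunction of instances true of $\abar$, hence of rank at least $r$, forcing the opposite sign below $r$.

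Two further points. First, even with this repair you obtain \emph{some} defining formula over $N$, whereas the statement requires $\psi$ to be a boolean combination of instances $\phi(\nbar_i;\ybar)$; your closing ``dual rank'' paragraph does not supply an argument for this conversion --- that the set $\{\nbar'\in N^{|\ybar|}:\UU\models\phi(\abar,\nbar')\}$ can be cut out by finitely many such instances is precisely what has to be proved, and a dual greedy run faces the same obstruction as above. The standard routes to the special form are Harrington's majority-vote argument (which the paper's remark after the fact alludes to) or a separate separation-of-variables step on top of plain definability. Second, to avoid circularity you should derive finiteness of $R_\phi$ via your parenthetical alternative (infinite binary tree implies the order property), since the counting bound for $\phi$-types over countable sets is itself usually deduced from definability; quoting it as a black box is defensible but delicate.
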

\begin{remark}
    Note that the formula $\psi(\ybar)$ depends only on $\tp_\phi(\abar)$.

    Also, although we will not use this, one can prove that the formula $\psi(\ybar)$ can be chosen to be a ``majority vote'' of instances of $\phi$, i.e. $\psi$ uses constants $\nbar = \nbar_0 \dots \nbar_k$ and $\psi(\ybar)$ says that $\phi(\nbar_i; \ybar)$ holds for the majority of $0 \leq i \leq k$.
\end{remark}

While the formula $\psi(\ybar)$ above determines whether whether $\phi(\abar; \ybar)$ holds for tuples in $N$, one should think that it also generically determines whether $\phi(\abar; \ybar)$ holds even for tuples outside of $\NN$. The notion of forking we now define describes when a formula implies that any tuple satisfying it does not follow this generic behavior. Returning to our earlier examples, a ``generic point'' in an infinite-dimensional vector space will not be in the span of some fixed finite set, and so any formula $\phi(x; \bbar)$ expressing $x$ as a non-zero linear combination of elements from $\bbar$ will fork. Similarly, in a bounded-degree graph (with infinitely many connected components), a ``generic point'' will not be in any of the components of some fixed finite set.

\begin{definition}
  	Let $T$ be a stable theory. Fix a formula $\phi(\xbar; \ybar)$ and a tuple $\bbar \in \UU^{|\ybar|}$. We say $\phi(\xbar; \bbar)$ \emph{forks} if there is some formula $\phi'(\bbar; \xbar)$ and corresponding (to $\phi'$) formula $\psi'(\xbar)$ as in Fact \ref{fact:deftypes} such that $\UU \models \phi(\xbar; \bbar) \rightarrow (\phi'(\bbar; \xbar) \leftrightarrow \neg \psi'(\xbar))$.
	
	Note that there may be several possibilities for formula $\psi'(\xbar)$, and implicit in this definition is that it does not depend on which one we choose.
\end{definition}

For example, in a bounded-degree graph, let $b \in \UU\setminus \NN$, and let $\phi(x; b)$ say that $x$ is at distance at most $k$ from $b$ for some fixed $k$. We show that $\phi$ forks. We let $\phi'(b; x)$ be $\phi(x; b)$. Since $\NN \prec \UU$, it must be a union of connected components, and so no $n \in \NN$ satisfies $\phi'$; thus we may take $\psi'(x)$ to be $x \neq x$. The case of vector spaces where $\phi(x; \bbar)$ expresses $x$ as a non-zero linear combination of elements from $\bbar \subset \UU \setminus \NN$ is similar, since $\NN$ must be a subspace and so no $n \in \NN$ will satisfy $\phi$. 

Given a tuple $\abar$, we now say that \emph{$\abar$ forks with $\bbar$} (or that \emph{$\abar$ and $\bbar$ are forking-dependent}) if there is some $\phi(\xbar; \bbar) \in \tp(\abar/\bbar)$ that forks. We write this as $\abar \nind \bbar$, and that $\abar$ does not fork with $\bbar$ (or that \emph{$\abar$ and $\bbar$ are forking-independent}) as $\abar \ind \bbar$. Note that whether $\abar \nind \bbar$ depends only on $\tp(\abar\bbar)$. Finally, given sets $A,B$, we say that \emph{$A$ forks with $B$}, denoted as $A \nind B$, if there are finite tuples $\abar \subset A, \bbar \subset B$ such that $\abar \nind \bbar$. $A \ind B$ is defined analogously. 

More particularly, we might say that $\abar$ forks with $\bbar$ \emph{via the formula $\phi$} if $\phi(\xbar; \bbar) \in \tp(\abar/\bbar)$ forks.

\begin{definition}[Finite satisfiability]
Given a tuple $\abar$ and sets $A \subset B$, we say that $\tp(\abar/B)$ is \emph{finitely satisfiable in $A$} if for every formula $\phi(\xbar)$ (with parameters from $B$) in $\tp(\abar/B)$, there is some $\abar' \subset A$ such that $\UU \models \phi(\abar')$;
\end{definition}

We recall the following facts. Stationarity and the bound on $\phi$-types follow easily from definability of $\phi$-types and our definition of forking. For the other facts, see e.g. \cite[Chapter 8]{tent2012course}.

\begin{fact} \label{fact:forking}
	Let $T$ be a stable theory. The relation $\ind$ satisfies the following properties, for all subsets $A,B$ of $\UU$ and tuples $\abar, \bbar \subset U$:
	\begin{enumerate}[(1)]
		\item (Symmetry:) 
        $A \ind B \iff B \ind A$;
		\item (Finite satisfiability) $\abar \ind B$ if and only if $\tp(\abar/NB)$ is finitely satisfiable in $N$
		\item (Stationarity) If $\abar \ind \bbar$, then whether $\UU \models \phi(\abar; \bbar)$ is determined by $\tp_\phi(\abar)$ and $\tp_{\phi^*}(\bbar)$. Thus, if $\abar' \models \tp_\phi(\abar)$ and $\bbar' \models \tp_{\phi^*}(\bbar)$ but $\UU \models \phi(\abar; \bbar) \leftrightarrow \neg \phi(\abar'; \bbar')$, then $\abar' \nind \bbar'$;
		\item (Bound on $\phi$-types) For every formula $\phi(\xbar; \ybar)$, $|S_\phi(T)| \leq \az$;
		\item (Extension) Given $p = \tp(\abar/A)$ and $B \supset A$, there is some $\abar' \models p$ such that $\abar' \ind B$
	\end{enumerate}
\end{fact}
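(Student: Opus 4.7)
All five properties are standard consequences of definability of $\phi$-types (Fact 2.10) together with the paper's explicit definition of forking over $\NN$; I would prove them in the order (4), (2), (3), (5), (1), with (1) the most delicate. Property (4) is immediate: by Fact 2.10 each $\phi$-type over $\NN$ is determined by its defining formula, a Boolean combination of the countably many $\phi$-instances over $N$, bounding $|S_\phi(T)|$ by $\az$. For (2), both directions proceed by contrapositive. If $\tp(\abar/NB)$ is not finitely satisfiable in $N$, witnessed by $\phi(\xbar;\bbar)$, set $\phi'(\bbar;\xbar):=\phi(\xbar;\bbar)$ and let $\psi'(\xbar)$ be the defining formula of $\tp_{\phi'}(\bbar)$: by the defining property $\phi(\nbar;\bbar)\leftrightarrow\psi'(\nbar)$ for $\nbar\in N^{|\xbar|}$ and elementarity $\NN\prec\UU$, $\psi'$ is equivalent to $\bot$, making the forking condition $\phi(\xbar;\bbar)\to(\phi'(\bbar;\xbar)\leftrightarrow\neg\psi'(\xbar))$ a tautology; so $\phi$ forks and $\abar\nind B$. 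Conversely, if $\phi(\xbar;\bbar)\in\tp(\abar/\bbar)$ forks via $\phi',\psi'$, evaluating the forking implication at $\xbar:=\abar$ gives $\phi'(\bbar;\abar)\leftrightarrow\neg\psi'(\abar)$; whichever branch holds, one of $\phi(\xbar;\bbar)\wedge\psi'(\xbar)$ or $\phi(\xbar;\bbar)\wedge\neg\psi'(\xbar)$ is satisfied by $\abar$ and lies in $\tp(\abar/NB)$, yet has no $N$-realization because $\phi(\nbar;\bbar)\leftrightarrow\psi'(\nbar)$ on $N$ forces both conjunctions to be unsatisfiable there.

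For (3), assume $\abar\ind\bbar$ and let $\psi^*(\xbar)$ be the defining formula of $\tp_{\phi^*}(\bbar)$. Using finite satisfiability of $\tp(\abar/N\bbar)$ in $N$ from (2), the two formulas $\phi(\xbar;\bbar)\wedge\neg\psi^*(\xbar)$ and $\neg\phi(\xbar;\bbar)\wedge\psi^*(\xbar)$ would each force an $N$-realization contradicting $\phi(\nbar;\bbar)\leftrightarrow\psi^*(\nbar)$, so $\UU\models\phi(\abar;\bbar)\leftrightarrow\psi^*(\abar)$; this value depends only on $\tp_\phi(\abar)$ (as $\psi^*$ is a Boolean combination of $\phi$-instances) and on $\tp_{\phi^*}(\bbar)$ (which determines $\psi^*$), giving the first sentence of (3); the second is the contrapositive. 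For (5), which implicitly requires $\abar\ind A$ (otherwise the conclusion fails), I would construct a global coheir $\tilde p\in S(\UU)$ of $\tp(\abar/NA)$: take an ultrafilter on $N^{|\xbar|}$ extending the proper filter generated by $\{\{\nbar:\UU\models\phi(\nbar;\cbar)\}:\phi(\xbar;\cbar)\in\tp(\abar/NA)\}$ and declare $\phi\in\tilde p$ iff its $N$-witness set is in the ultrafilter. Any realization $\abar'$ of $\tilde p\restriction NB$ then satisfies $\abar'\models p$ with $\tp(\abar'/NB)$ finitely satisfiable in $N$, so $\abar'\ind B$ by (2).

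The hardest step is (1). Following the standard argument (cf.\ \cite[Chapter 8]{tent2012course}): starting from $\tp(\abar/N\bbar)$ finitely satisfiable in $N$, extend $\tp(\abar/N)$ to a global coheir $p$ as in (5), and build $(\abar_i:i\in\omega)$ with $\abar_0=\abar$ and each $\abar_{i+1}$ realizing $p\restriction N\bbar\abar_0\dots\abar_i$. Stationarity of types over models---a consequence of (3) applied over $\NN$---makes the sequence $N\bbar$-indiscernible with $\abar_i\equiv_{N\bbar}\abar$. If $\tp(\bbar/N\abar)$ were not finitely satisfiable in $N$, witnessed by $\chi(\ybar;\abar)$, then indiscernibility forces $\chi(\bbar;\abar_i)$ for all $i$; combined with $|S_\chi(T)|\leq\az$ from (4) and the $N$-indiscernibility of $(\abar_i)$, a standard extraction argument produces an alternating $\chi$-pattern contradicting stability of $\chi$. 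This indiscernible-extraction step is the classical core of symmetry of non-forking in stable theories and is the main obstacle.
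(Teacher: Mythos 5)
The paper itself offers no proof of this statement: it is quoted as a known fact, with the remark that (3) and (4) follow easily from definability of $\phi$-types and the paper's definition of forking, and with (1), (2), (5) cited to \cite[Chapter 8]{tent2012course}. Your write-up therefore does more than the paper: your derivations of (3) and (4) are exactly the intended easy consequences of Fact \ref{fact:deftypes}, and your proofs of (2) and (5) directly from the paper's nonstandard definition of forking are a worthwhile addition, since (2) is precisely the bridge (paper-forking $\iff$ failure of finite satisfiability in $N$) that justifies importing the cited literature, and your coheir/ultrafilter construction for (5) is the standard one; your caveat that (5) tacitly presupposes $\abar \ind A$ is correct and harmless, as the paper only invokes (5) for types over $N$, where this is automatic because types over a model are finitely satisfiable in it. Two points to fix or flag. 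First, in the converse direction of (2) your justification conflates $\phi$ with the given forking witness $\phi'$: the defining equivalence reads $\phi'(\bbar;\nbar)\leftrightarrow\psi'(\nbar)$ on $N$, not $\phi(\nbar;\bbar)\leftrightarrow\psi'(\nbar)$, and $\phi'$ is handed to you, not chosen; the repair is immediate and simpler than your case split: any $\nbar\in N$ satisfying $\phi(\xbar;\bbar)$ would, by the forking implication, satisfy $\phi'(\bbar;\nbar)\leftrightarrow\neg\psi'(\nbar)$, contradicting the defining equivalence, so $\phi(\xbar;\bbar)$ itself lies in $\tp(\abar/NB)$ and has no realization in $N$. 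Second, for (1) you only sketch the classical symmetry proof: the ``standard extraction argument'' yielding an alternating $\chi$-pattern from the coheir sequence is the actual content of symmetry and is not carried out, so on this item your treatment sits at the same citation level as the paper's appeal to \cite{tent2012course}; that is acceptable for a stated Fact, but it should not be mistaken for a complete proof.
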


\begin{fact}[{\cite[Lemma 4.2.6]{BS1985monadic}}] \label{fact:monstab}
	Let $T$ be monadically stable. Then forking has the following additional properties.
	\begin{enumerate}[(1)]
		\item (Total triviality) If $A \nind B$, then there is some $a \in A$ and some $b \in B$ such that $a \nind b$.
		\item (Transitivity on singletons) If $a \nind b$ and $b \nind c$ then $a \nind c$. 
	\end{enumerate}
\end{fact}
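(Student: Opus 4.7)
The plan is to prove both clauses by contradiction, arguing in each case that the failure yields, after naming a suitable subset by fresh unary predicates, a formula witnessing a linear order of order type $\omega$ on an infinite set, which would contradict monadic stability of $T$. Throughout I work in the monster $\UU \succ \NN$ and use Fact~\ref{fact:forking} freely, together with the Definability of $\phi$-types (Fact~\ref{fact:deftypes}).

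For \emph{total triviality} (1), suppose $A \nind B$ is witnessed by tuples $\abar \subset A$, $\bbar \subset B$ and a formula $\phi(\xbar;\bbar) \in \tp(\abar/\bbar)$ that forks, while $a \ind b$ for every $a \in A$ and $b \in B$. I would choose $(\abar, \bbar)$ of minimal total length. By iterated Extension, build an $\omega$-length Morley sequence $(\abar^i \bbar^i : i < \omega)$ in $\tp(\abar\bbar / \NN)$. Pairwise singleton-level independence of coordinates in $\bigcup_i \abar^i$ versus $\bigcup_j \bbar^j$, combined with Definability of $\phi$-types, forces $\phi(\abar^i; \bbar^j)$ to depend only on a definable comparison of the indices $i$ and $j$. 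After naming $\bigcup_i \abar^i$ and $\bigcup_i \bbar^i$ by fresh unary predicates, the Morley indices acquire a $\phi$-definable linear order inside the expanded structure, contradicting monadic stability. The main obstacle here is promoting the ``$\phi$ discriminates at $i=j$'' behavior into a genuine linear (as opposed to merely symmetric or equivalence) relation, which is where minimality of $(\abar, \bbar)$ and the asymmetry built into Stationarity are used.

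For \emph{transitivity on singletons} (2), suppose $a \nind b$, $b \nind c$, but $a \ind c$ (for contradiction). Extension supplies $b' \equiv_\NN b$ with $b' \ind ac$, so in particular $b' \ind a$ while $b \nind a$. Stationarity would force $\tp_\phi(b') = \tp_\phi(b)$ to determine $\tp(b'/a)$, giving $b \equiv_a b'$ under the extra hypothesis that $b \ind a$ --- which is precisely what fails. To turn this near-miss into a real proof, I would bootstrap via part (1): in an expansion of $\UU$ by unary predicates naming the coordinates of a Morley sequence $(a^i b^i c^i : i < \omega)$ in $\tp(abc/\NN)$, the assumed configuration $a^i \nind b^i$, $b^i \nind c^i$, $a^i \ind c^i$ would produce cross-instance $\phi$-patterns between the $a$-coordinates and $c$-coordinates mediated by the $b$-coordinates, and the same ordering mechanism as in (1) contradicts monadic stability. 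I expect (2) to be the more delicate clause: it is not directly a statement about singleton-versus-tuple forking, and its proof really needs the rigidity supplied by the Baldwin--Shelah structure theory, which forces singleton forking-dependence to be an equivalence relation over any model.
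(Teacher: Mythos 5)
The paper itself offers no proof of this Fact: it is imported verbatim from Baldwin--Shelah \cite[Lemma 4.2.6]{BS1985monadic}, so your sketch has to stand on its own, and it does not. The decisive step in your argument for (1) --- that after naming $\bigcup_i\abar^i$ and $\bigcup_i\bbar^i$ by unary predicates ``the Morley indices acquire a $\phi$-definable linear order'' --- is not merely unproved but cannot hold for the configuration you build. Since $(\abar^i\bbar^i : i<\omega)$ is a Morley sequence over $\NN$, the blocks are mutually independent, so by stationarity (Fact~\ref{fact:forking}(3)) the truth value of \emph{every} formula $\theta(\abar^i;\bbar^j)$ with $i\neq j$ is the same for all off-diagonal pairs: what the sequence exhibits is an equality pattern (diagonal versus off-diagonal, i.e.\ a matching), not an order, and a matching is itself monadically stable. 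Worse, a Morley sequence over a model is an indiscernible \emph{set} of blocks, so any permutation of the blocks extends to an automorphism of $\UU$ fixing $\NN$ pointwise and preserving your two unary predicates setwise; hence no formula of the expansion with parameters from $N$ can linearly order the indices, and neither ``minimality of $(\abar,\bbar)$'' nor ``asymmetry built into Stationarity'' can break this symmetry. The missing idea is that the monadic predicates must be the symmetry-breaking device: in the Baldwin--Shelah style argument one manufactures a \emph{doubly indexed} family (mutually independent copies $a^i$, $b^j$ of the relevant singletons together with witnesses $c^{ij}$ chosen, via extension and stationarity, to fork exactly with the pair $(a^i,b^j)$), and then a unary predicate selecting an \emph{arbitrary} subset of the witnesses codes an arbitrary bipartite graph --- in particular a half-graph --- between the $a$'s and the $b$'s, contradicting monadic stability. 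The instability comes from the freedom in choosing the predicate, not from the first-order pattern on two named Morley sequences.

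Part (2) is in worse shape. After correctly noting that the direct stationarity argument is a near-miss (and note that stationarity only pins down the $\phi$-type of $b'$ over $a$, not $\tp(b'/a)$), you defer to ``the same ordering mechanism as in (1)'' and to ``the rigidity supplied by the Baldwin--Shelah structure theory'', which is precisely the statement to be proved; moreover (2) is not a formal consequence of (1), so the bootstrap would need its own coding construction (linking elements attached to pairwise independent pairs $a^i, c^j$ realizing $\tp(ac/\NN)$, again selected by a unary predicate to encode a half-graph). As it stands, the paper's treatment --- the citation to \cite[Lemma 4.2.6]{BS1985monadic} --- is the honest route; to replace it you would have to carry out such a coding argument in full, and your sketch does neither for (1) nor for (2).
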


 Facts \ref{fact:forking}(1) and \ref{fact:monstab}(2) imply that in monadically stable theories, forking (over $N$) defines an equivalence relation on the complement of $N$. Furthermore, by Fact \ref{fact:monstab}(1), two sets fork if and only if there is some class of this equivalence relation that they both intersect. Also, Fact \ref{fact:forking}(3) shows that the relations between distinct classes are controlled by how those classes individually relate to $N$.

\section{Grounded theories and Tarski-Vaught types}
\label{sec:Stone}
As discussed in Section \ref{sec:fork}, our presentation is smoother if we work over a model named by constants. We now formalize this in the definition of \emph{grounded theories}, and introduce some important type spaces for a grounded theory.

\begin{definition}
Assume that $\sigma$ is a countable signature and $T$ is a complete theory in the first-order language of $\sigma$-structures.
We say that $T$ is \emph{grounded} if, for every formula of the form $(\exists x)\,\varphi(x)$ in $T$ there exists a constant $c\in\sigma$ such that $\varphi(c)\in T$.    
\end{definition}
Note that this property is equivalent (as a direct consequence of the Tarski--Vaught criterion) to the property that for every model $\mathbf M$ of $T$, the substructure of $\mathbf M$ induced by the constants is an elementary substructure of $\mathbf M$. In other words, if a theory is grounded, then it has a  model $\mathbf N$ whose domain is exactly the set of all the constants.

\begin{definition} Let $T$ be a grounded theory. A \emph{Tarski--Vaught type} (or \emph{TV-type}) of $T$ is a complete type $p\in S_\omega(T)$ that satisfies the following conditions:
\begin{itemize}
	\item for every $i<j<\omega$, $(x_i\neq x_j)\in p$;
	\item for every $i<\omega$ and every constant $c\in\sigma$, $(x_i\neq c)\in p$;
	\item for every formula $\psi(\bar x)\in p$ of the form
	$(\exists y)\ \varphi(\bar x,y)$, either there exists a constant $c\in\sigma$ such that $\varphi(\bar x,c)\in p$, or there exists $i<\omega$ such that $\varphi(\bar x,x_i)\in p$ (or both).
\end{itemize}
We denote by $S_{TV}(T)$ the set of all the TV-types in $S_\omega(T)$ and by $N$ the set of all the constants of $\sigma$.

\end{definition}

\begin{lemma}
	\label{lem:TV}
	$S_{TV}(T)$ is a Borel subset of $S_\omega({T})$.
\end{lemma}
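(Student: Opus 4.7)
The plan is to write $S_{TV}(T)$ as a (countable) intersection of sets explicitly visibly Borel in $S_\omega(T)$, one for each of the three conditions in the definition of a TV-type. Recall that the sets $\clop(\varphi)$ for $\varphi\in{\sf FO}_\omega$ form a clopen basis for the Polish topology on $S_\omega(T)$, and that $|{\sf FO}_\omega[\sigma]|=\aleph_0$ since $\sigma$ is countable.

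First I would handle the two ``easy'' conditions. The first condition is cut out by the closed set
\[
A_1 \;=\; \bigcap_{i<j<\omega}\clop(x_i\neq x_j),
\]
and the second by
\[
A_2 \;=\; \bigcap_{i<\omega}\bigcap_{c\in\sigma}\clop(x_i\neq c),
\]
which is again closed because there are only countably many constants and countably many variables. Both are in particular Borel.

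The main content is the third, existence-of-witness, condition. For each existential formula $\psi(\bar x)=(\exists y)\,\varphi(\bar x,y)\in{\sf FO}_\omega[\sigma]$ (with its finitely many free variables among $\{x_i\}_{i<\omega}$), the set of types $p\in S_\omega(T)$ satisfying the implication ``$\psi\in p\Rightarrow$ (some witness)'' for this particular $\psi$ is
\[
B_\psi \;=\; \clop(\neg\psi)\;\cup\;\bigcup_{c\in\sigma}\clop(\varphi(\bar x,c))\;\cup\;\bigcup_{i<\omega}\clop(\varphi(\bar x,x_i)),
\]
which is a countable union of clopen sets, hence open. Since there are only countably many such existential formulas $\psi$, the intersection $A_3=\bigcap_\psi B_\psi$ is a $G_\delta$ subset of $S_\omega(T)$.

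Finally, $S_{TV}(T)=A_1\cap A_2\cap A_3$, which is Borel (indeed $G_\delta$). I do not anticipate any genuine obstacle: the only thing to keep track of is that all countable unions/intersections involved really are countable, which is guaranteed by the countability of $\sigma$ and hence of ${\sf FO}_\omega[\sigma]$.
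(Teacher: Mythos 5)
Your proof is correct and follows essentially the same route as the paper: write $S_{TV}(T)$ as a countable intersection of clopen sets (for the first two conditions) and of countable unions of clopens (for the witness condition), with all the countability coming from the countability of $\sigma$ and of the variable set. In fact your treatment of the third condition is slightly more careful than the paper's own write-up, since you include the disjunct $\clop(\neg\psi)$ needed to capture the implication ``$\psi\in p\Rightarrow{}$witness'' for types that do not contain $\psi$, a term the paper's displayed open sets omit.
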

\begin{proof}
	The set $S_{TV}(T)$  is the intersection of the closed set $S_\omega(T)$ with
    
\begin{itemize}
	\item the countably many clopens $\clop(x_i\neq x_j)$ (for   $i<j<\omega$),
	\item the countably many clopens $\clop(x_i\neq c)$ (for  $i<\omega$ and constant $c\in N$),
	\item countably many open sets of the form
	\[\bigcup_{c\in N}\clop(\varphi(\bar x,c))\cup\bigcup_{i<\omega}\clop(\varphi(\bar x,x_i)).\]
\end{itemize}
\end{proof}

\begin{notation} \label{not:proj}
    Fix a theory $T$. We let $\proj \colon S_\omega(T) \to S_1(T)$ be the projection that maps a type $p(x_0, x_1,\dots) \in S_\omega(T)$ to the subset of formulas it contains that have only $x_0$ as a free variable.

\end{notation}
Note that $\proj$ is continuous.

\begin{lemma}
	\label{lem:proj}
	Let $T$ be a grounded theory (with set of constants $N$). Then,
	\[
	\proj(S_{\sf TV}(T))=S_1(T)\setminus\set{\tp(c) |  c\in N}.
	\]
\end{lemma}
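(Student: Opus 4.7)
The forward inclusion is immediate: any $p\in S_{\sf TV}(T)$ contains $(x_0\neq c)$ for every constant $c$, so $\proj(p)$ inherits these formulas and cannot equal $\tp(c)$ for any $c\in N$.

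For the reverse inclusion, I would fix $q\in S_1(T)\setminus\set{\tp(c) | c\in N}$, noting that by completeness of $q$ this forces $(x\neq c)\in q$ for every $c\in N$, and aim to build $p\in S_{\sf TV}(T)$ with $\proj(p)=q$. The overall strategy is to produce a countable model $\mathbf M_0\models T$ whose universe is exactly $N\cup\set{b_i | i<\omega}$ for some $\omega$-sequence of distinct non-constants $(b_i)_{i<\omega}$ with $b_0$ realizing $q$; then the Tarski--Vaught property of $\mathbf M_0$ will translate directly into the TV-type clause for $\tp(b_0,b_1,\dots)$.

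The main obstacle is to verify that the partial type $\Sigma := \bigcup_{i<\omega} q(x_i) \cup \set{x_i\neq x_j | i<j<\omega}$ is consistent with $T$. If it were inconsistent, by compactness some $\varphi(x)\in q$ would have at most $n-1$ realizations in every model of $T$, and completeness of $T$ would pin the count down to a specific $m\geq 1$, i.e.\ $T\vdash\exists^{=m}x\,\varphi(x)$. Iterated groundedness applied to the sentences $\exists x(\varphi(x)\wedge x\neq c_0\wedge\dots\wedge x\neq c_{i-1})$ would then produce distinct constants $c_0,\dots,c_{m-1}$ each satisfying $\varphi$, and by the count these would exhaust the realizations of $\varphi$. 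Since every realization of $q$ realizes $\varphi$, this would force $q=\tp(c_i)$ for some $i$, contradicting our hypothesis on $q$.

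Once $\Sigma$ is shown consistent, I would take any $\mathbf M\models T$ realizing it by distinct elements $(a_i)_{i<\omega}$, which must lie outside $N$ since $q\ni(x\neq c)$ for every constant. Downward L\"owenheim--Skolem (using countability of $\sigma$) yields a countable $\mathbf M_0\prec\mathbf M$ with $N\cup\set{a_i | i<\omega}\subseteq M_0$, so $M_0\setminus N$ is countably infinite; enumerate it as $\set{b_0,b_1,\dots}$ with $b_0=a_0$, and set $p:=\tp^{\mathbf M_0}(b_0,b_1,\dots)$. The three TV-type clauses then hold: pairwise distinctness and non-equality to constants are built into the enumeration, and the TV clause follows because any existential witness for a formula in $p$ lies in $M_0=N\cup\set{b_i | i<\omega}$, hence is either a constant or some $b_i$. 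Finally $\proj(p)=\tp(b_0)=q$, completing the argument.
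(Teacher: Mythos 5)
Your proof is correct and follows essentially the same route as the paper: realize $q$ in a (countable) elementary extension of $\NN$ with countably infinitely many elements outside $N$, and observe that the type of an enumeration of the complement of $N$, starting with the realization of $q$, is a TV-type projecting to $q$. The only difference is that you justify the existence of such a model in detail (via the compactness-plus-groundedness argument that a non-constant type has infinitely many realizations, followed by downward L\"owenheim--Skolem), a step the paper simply asserts.
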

\begin{proof}
	First, note that $\proj(S_{\sf TV}(T))\subseteq \proj(S_\omega(T))=S_1(T)$.
	Moreover, for every constant $c\in N$, the formula $(x_0\neq c)$ belongs to all the types in $S_{\sf TV}(T)$. Hence, we have the left-to-right inclusion.

    For the reverse inclusion, fix some $p(x_0) \in S_1(T)\setminus\set{\tp(c) |  c\in N}$. Consider some $\mathbf M \succ \NN$ realizing $p$ such that $M \setminus N$ is countably infinite. Choose any bijection $f:  \set{x_i : i \in \omega} \to M \setminus N$ such that $f(x_0)$ realizes $p$. Then $M \setminus N$ realizes a type in $S_{\sf TV}(T)$ that projects to $p$.
\end{proof}

\begin{definition} \label{def:NF(T)}
Let $T$ be a grounded stable theory and let $k \in \omega$. We let $\NF_k(T) \subset S_k(T)$ be the set of $k$-types realized by elements $(a_0, \dots, a_{k-1})$ in some model such that $a_i \ind a_j$ for every $i \neq j$.

We then let $\pi_k \colon \NF_k(T) \to S_1(T)^k$ be the mapping sending
$\tp(a_0, \dots, a_{k-1})$ to $ 
(\tp(a_0), \dots, \tp(a_{k-1}))$.
\end{definition}

\begin{lemma} \label{lem:NFk Borel}
	Let $T$ be a grounded stable theory. Then $ \NF_{k}(T)$ is a Borel subset of  $S_k(T)$.
\end{lemma}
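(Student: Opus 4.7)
The plan is to reduce to the two-dimensional case and then express forking-independence of pairs as a Borel condition on 2-types via the finite satisfiability characterization from Fact \ref{fact:forking}(2).

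First, I observe that whether $a \ind b$ holds is determined by $\tp(a,b)$, since by Fact \ref{fact:forking}(2) it is equivalent to finite satisfiability of $\tp(a/Nb)$ in $N$, and the latter depends only on $\tp(a,b)$. So I let $I \subset S_2(T)$ be the set of 2-types whose realizations are pairs of forking-independent elements, and let $\pi_{ij} \colon S_k(T) \to S_2(T)$ be the map sending $\tp(a_0,\ldots,a_{k-1})$ to $\tp(a_i,a_j)$. This map is continuous because the preimage of a basic clopen $\clop(\phi(x_0,x_1)) \subset S_2(T)$ is the basic clopen $\clop(\phi(x_i,x_j)) \subset S_k(T)$ obtained by renaming the free variables. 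I then have
\[\NF_k(T) = \bigcap_{0 \leq i < j < k} \pi_{ij}^{-1}(I),\]
so the problem reduces to showing that $I$ is Borel.

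Next, I unpack the finite-satisfiability condition. Since types are closed under finite conjunction, finite satisfiability of $\tp(a/Nb)$ in $N$ is equivalent to the single-formula statement: for every formula $\psi(x_0,x_1)$ in $\tp(a,b)$, there is some $n \in N$ with $\psi(n,x_1) \in \tp(a,b)$ (this makes sense because in the grounded language every formula with parameters from $N \cup \{b\}$ is of the form $\psi(x_0,x_1)$ evaluated at $(x_0, b)$, and constants may be substituted freely). As $\sigma$ is countable and $N$ is countable, only countably many formulas and constants appear, and so
\[I = \bigcap_{\psi(x_0,x_1)} \left( \clop(\neg\psi(x_0,x_1)) \cup \bigcup_{n \in N} \clop(\psi(n,x_1)) \right),\]
a countable intersection of open sets, hence Borel (in fact $G_\delta$).

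The main conceptual step is noticing that in the stable setting Fact \ref{fact:forking}(2) gives a finite-satisfiability characterization of forking-independence, which directly translates to a Borel (indeed $G_\delta$) condition on $S_2(T)$. Aside from keeping variable renaming tidy in the projection argument, I do not foresee any significant obstacle.
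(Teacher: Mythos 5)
Your proof is correct, but it takes a genuinely different route from the paper. You reduce to dimension two via the continuous restriction maps $\pi_{ij}\colon S_k(T)\to S_2(T)$ and then characterize the independent $2$-types through Fact \ref{fact:forking}(2): since the constants name $N$, finite satisfiability of $\tp(a/Nb)$ in $N$ is indeed a condition on the complete $2$-type alone, and quantifying over the countably many $L_N$-formulas $\psi(x_0,x_1)$ and constants $n\in N$ yields your $G_\delta$ (hence Borel) description of $I$, with symmetry (Fact \ref{fact:forking}(1)) taking care of the ordered pairs $i\neq j$. The paper instead works directly inside $S_k(T)$ and, for each pair $i<j$ and each formula $\phi$, uses stationarity together with the countability of the local type spaces (Facts \ref{fact:forking}(3)--(4)) to write the set of $k$-types in which $x_i$ does not fork with $x_j$ via $\phi$ as a countable union, over pairs $(q,q')\in S_\phi(T)\times S_{\phi^*}(T)$, of closed sets of the form $\{p : p\supset q\cup q'\cup\{\pm\phi\}\}$; $\NF_k(T)$ is then a countable intersection of these. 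Your argument is somewhat more elementary within the stated toolkit and even gives a slightly better complexity bound ($\Pi^0_2$ rather than a countable intersection of $F_\sigma$ sets), whereas the paper's bookkeeping with the countable set $Q_\phi$ of compatible $(\phi,\phi^*)$-type pairs is the same device it reuses later (in the proof that $\tb$ is totally Borel and in the measurability part of Lemma \ref{lem:null}), so it keeps the exposition uniform. Both proofs rely only on facts the paper has already recorded, so there is no gap in your approach.
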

\begin{proof}
Fix $i < j < \ell$. It suffices to show the set $X_{i,j,\phi}$ of $k$-types realized by tuples $\abar = (a_0, \dots, a_{k-1})$ such that $a_i$ does not fork with $a_j$ via $\phi(x; y)$ is Borel.

 By Facts \ref{fact:forking}(3)-(4), there is some (countable) $Q_\phi \subset S_\phi(T) \times S_{\phi^*}(T)$ (consisting of those pairs for which non-forking realizations satisfy $\phi$) such that $X_{i,j,\phi}$ consists of those types $p(x_0, \dots, x_{k-1})$ such that: either $p \supset q(\xbar_i) \cup q'(\xbar_j) \cup \set{\phi(\xbar_i; \xbar_j)}$ for some $(q, q') \in Q_\phi$, or $p \supset q(\xbar_i) \cup q'(\xbar_j) \cup \set{\neg \phi(\xbar_i; \xbar_j)}$ for some $(q, q') \notin Q_\phi$.
\end{proof}

\begin{lemma} \label{lem:Boreliso}
	Let $T$ be a grounded stable theory. The mapping $\pi_{k} \colon \NF_k(T) \to S_1(T)^k$ is a continuous Borel isomorphism.
\end{lemma}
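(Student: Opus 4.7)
The plan is to prove three properties of $\pi_k$: continuity, bijectivity, and then invoke the Lusin--Suslin theorem to obtain the Borel inverse automatically.

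For continuity, a basic open set in the product topology on $S_1(T)^k$ has the form $\clop(\varphi_0) \times \dots \times \clop(\varphi_{k-1})$ for formulas $\varphi_i(x)$. Its preimage under $\pi_k$ is $\clop(\varphi_0(x_0) \wedge \dots \wedge \varphi_{k-1}(x_{k-1})) \cap \NF_k(T)$, which is clopen in $\NF_k(T)$. Thus $\pi_k$ is continuous.

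For surjectivity, I would fix $(p_0, \dots, p_{k-1}) \in S_1(T)^k$ and build pairwise forking-independent realizations. Start with $a_0 \models p_0$. Given $a_0, \dots, a_{i-1}$, use the Extension property (Fact \ref{fact:forking}(5)) to find $a_i \models p_i$ with $a_i \ind \{a_0, \dots, a_{i-1}\}$. By monotonicity (which is immediate from the definition of forking via finite subtuples), this gives $a_i \ind a_j$ for every $j < i$, so $\tp(a_0, \dots, a_{k-1}) \in \NF_k(T)$ and $\pi_k$ sends it to $(p_0, \dots, p_{k-1})$.

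For injectivity, the main point is that pairwise forking-independence together with identical 1-types forces identical $k$-types. First I would upgrade pairwise independence to ``full'' independence: if $\tp(a_0, \dots, a_{k-1}) \in \NF_k(T)$, then for each $i$ we have $a_i \ind \{a_0, \dots, a_{i-1}\}$, because otherwise Total Triviality (Fact \ref{fact:monstab}(1)) would give some $j < i$ with $a_i \nind a_j$, contradicting that $\tp \in \NF_k(T)$. Now suppose $\tp(a_0, \dots, a_{k-1})$ and $\tp(a_0', \dots, a_{k-1}')$ both lie in $\NF_k(T)$ and $\tp(a_i) = \tp(a_i')$ for each $i$. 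I would show by induction on $i$ that $\tp(a_0, \dots, a_i) = \tp(a_0', \dots, a_i')$. The inductive step uses Stationarity (Fact \ref{fact:forking}(3)): since $a_{i+1} \ind (a_0, \dots, a_i)$ and $a_{i+1}' \ind (a_0', \dots, a_i')$, for every formula $\varphi(\bar y; x)$ the truth value of $\varphi(a_0, \dots, a_i; a_{i+1})$ is determined by $\tp_\varphi(a_0, \dots, a_i)$ and $\tp_{\varphi^*}(a_{i+1})$, both of which agree with the primed versions by induction and by the hypothesis $\tp(a_{i+1}) = \tp(a_{i+1}')$.

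Finally, $\NF_k(T)$ is a Borel subset of the standard Borel space $S_k(T)$ (by Lemma \ref{lem:NFk Borel}), hence is itself a standard Borel space under the induced $\sigma$-algebra; likewise $S_1(T)^k$ is standard Borel. Since $\pi_k$ is a continuous (in particular Borel) bijection between standard Borel spaces, the Lusin--Suslin theorem (Theorem \ref{fact:Suslin}) yields that $\pi_k^{-1}$ is Borel, so $\pi_k$ is a Borel isomorphism. The main obstacle is the injectivity argument, which relies essentially on the monadic stability hypothesis (via total triviality) to promote the hypothesis of pairwise non-forking to the joint non-forking needed to apply stationarity.
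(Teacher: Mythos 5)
Your proof is correct and follows essentially the same route as the paper: continuity is immediate from clopen preimages, surjectivity comes from iterating the Extension property, injectivity from upgrading pairwise to full forking-independence and then applying stationarity, and Lusin--Suslin supplies the Borel inverse. The only (minor) divergence is that your injectivity step invokes total triviality and so covers the monadically stable case only, which the paper notes is the only case needed later, while adding a one-line remark that for general stable $T$ the upgrade should instead come from monotonicity and transitivity of non-forking.
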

\begin{proof}
    That $\pi_{k}$ is continuous is trivial.
	
	In order to prove the surjectivity, we consider a sufficiently saturated model $\mathbf M$.
	Let $(p_0,\dots,p_{k-1})\in S_{1}(T)^k$. By Fact \ref{fact:forking}(5), there exist $a_0,\dots,a_{k-1} \in M$ with
	$ a_i\ind \set{a_0, \dots, a_{i-1}}$ for all $0 < i<k$ and $\tp(a_i)=p_i$ for all $0 \leq i<k$. Thus, 
	$\tp(a_0, \dots, a_{k-1})\in  \NF_{k}(T)$ and $\pi_{k}(\tp(a_0, \dots, a_{k-1}))=(p_0,\dots,p_{k-1})$.
	
	If $T$ is monadically stable, which is the only case we will need later, then Fact \ref{fact:monstab}(1) implies that $a_i \ind \set{a_0, \dots, a_{k-1}}\setminus\set{a_i}$ for every $0 \leq i < k$; the more general stable case follows from standard properties of non-forking independence (monotonicity and transitivity, see e.g. \cite[Theorem 8.5.5]{tent2012course}).The injectivity of $\pi_{k}$ then follows from Fact \ref{fact:forking}(3).
	Hence, $\pi_{k}$ is a bijection. As $ \NF_{k}(T)$ and  $S_{1}(T)^k$ are standard Borel spaces, $\pi_{k}$ is a Borel isomorphism by \cref{fact:Suslin}.
\end{proof}

\section{Totally Borel models for monadically stable theories}
\label{sec:TB}

In this section, we consider a 	 countable signature $\sigma$, a grounded monadically stable theory $T$ of $\sigma$-structures, and a Borel subset $S$ of $S_{\sf TV}(T)$. As usual, we fix a suitably saturated model $\UU$, so the set of constants names an elementary substructure $\NN \prec \UU$.

Let $\bI = [0,1] \subset \mathbb R$. We aim to prove the following theorem.

\begin{theorem} \label{thm:totBorel}
There is $\tb \prec \UU$ that can be equipped with a topology so that it is a totally Borel model whose domain can be identified with $(\bI \times S \times \omega) \sqcup N$, satisfying the following conditions.
\begin{enumerate}
    \item For every $r \in \bI$ and $p \in S$, $B(p, r) \models p$, where $B(p, r):=((r, p, i) : i \in \omega)$;
    \item The $\omega$-tuples in $\set{B(p, r) | p \in S, r \in \bI}$ are pairwise forking-independent.
\end{enumerate}
\end{theorem}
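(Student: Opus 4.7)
The plan is to build $\tb$ in two stages: first realize each TV-type $p \in S$ by $\bI$-many mutually forking-independent $\omega$-tuples inside $\UU$ via a transfinite construction, then equip the resulting set with a standard Borel topology making every parameter-definable set Borel. Since $|\bI \times S| \leq \cont$ and $\UU$ is sufficiently saturated, the construction has room to proceed. Enumerate $\bI \times S$ as $((r_\alpha, p_\alpha) : \alpha < \kappa)$ with $\kappa \leq \cont$. At stage $\alpha$, the extension property of non-forking (Fact \ref{fact:forking}(5)), applied in the saturated $\UU$, yields an $\omega$-tuple $B(p_\alpha, r_\alpha) \subset U$ realizing $p_\alpha$ and forking-independent over $N$ from $\bigcup_{\beta < \alpha} B(p_\beta, r_\beta)$. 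The TV-type axioms force each $B(p_\alpha, r_\alpha)$ to have pairwise distinct entries, none of which is a constant; moreover, forking-independent tuples with entries outside $N$ cannot share any element (otherwise $a \nind_N a$, contradicting finite satisfiability in $\NN$, Fact~\ref{fact:forking}(2)), so the tuples are automatically pairwise disjoint and disjoint from $N$. I then set $\tb := N \sqcup \bigsqcup_\alpha B(p_\alpha, r_\alpha)$, identified with $N \sqcup (\bI \times S \times \omega)$ via $(r_\alpha, p_\alpha, i) \mapsto B(p_\alpha, r_\alpha)(i)$.

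For $\tb \prec \UU$, I would apply the Tarski--Vaught criterion: given $\bar a \subset \tb$ and $x_0 \in \UU$ witnessing $\phi(x, \bar a)$, decompose $\bar a = \bar a_0 \sqcup \bar a_1 \sqcup \dots \sqcup \bar a_m$ with $\bar a_0 \subset N$ and $\bar a_\ell \subset B(p_{\alpha_\ell}, r_{\alpha_\ell})$ for $\ell \geq 1$. Each $M_\ell := B(p_{\alpha_\ell}, r_{\alpha_\ell}) \cup N$ is already $\prec \UU$ by the defining TV-type axioms. Monadic stability (Fact \ref{fact:monstab}) makes forking over $N$ an equivalence relation on $U \setminus N$, so pairwise independence of the $M_\ell$'s forces $x_0$ to fork with at most one class $\bar a_{\ell_0}$; choose such an $\ell_0$ (or any one if no forking occurs), and note that $(x_0, \bar a_{\ell_0}) \ind_N \bar a_{\neq \ell_0}$ by total triviality. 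By definability of types (Fact \ref{fact:deftypes}) combined with stationarity (Fact \ref{fact:forking}(3)), there is a formula $\chi(x, \bar y_{\ell_0})$ over $N$ such that for any $(x, \bar y_{\ell_0}) \ind_N \bar a_{\neq \ell_0}$, $\chi(x, \bar y_{\ell_0})$ holds if and only if $\phi(x, \bar y_{\ell_0}, \bar a_{\neq \ell_0})$ does. Hence $\UU \models \chi(x_0, \bar a_{\ell_0})$, and elementarity of $M_{\ell_0} \prec \UU$ produces $x \in M_{\ell_0}$ with $\chi(x, \bar a_{\ell_0})$; since $x \in M_{\ell_0}$ is automatically forking-independent from $\bar a_{\neq \ell_0}$, the reverse direction of the equivalence gives $\phi(x, \bar a)$, so $x \in \tb$ is the required Tarski--Vaught witness.

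For the Borel structure, I equip $\bI$ with its standard topology, $S$ with the subspace topology from $S_\omega(T)$ (a standard Borel space by \cref{lem:TV}), and $\omega$ and $N$ with the discrete topology, and form the resulting coproduct of products on $(\bI \times S \times \omega) \sqcup N$; the result is Polish, hence standard Borel. To show every parameter-definable set is Borel, I fix $\phi(\bar x, \bar y)$ and $\bar a \subset \tb$, and classify candidate tuples $\bar b$ by the partition of their entries according to which $B(p,r)$ or $N$ they come from. The partition, together with the TV-types and $\omega$-coordinates attached to each class, is a Borel function of $\bar b$; across classes the components are mutually forking-independent, so by stationarity $\tp(\bar b \bar a/N)$ is determined by these data, and Fact \ref{fact:deftypes} translates the truth of $\phi(\bar b, \bar a)$ into a Borel condition on them. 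Definability of types also ensures that the condition ``this formula lies in $p$'' is Borel on $S$, so the within-class contribution is Borel too.

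The main obstacle is the elementarity step, which rests crucially on the equivalence-relation structure of forking over $N$ that monadic stability supplies (via total triviality and transitivity on singletons): without it, a single witness $x_0$ could fork non-trivially with several of the pairwise-independent classes $B(p,r)$ at once, and the reduction of $\phi(x_0, \bar a)$ to a formula $\chi$ over $N$ with parameters in a single $M_{\ell_0}$ would collapse. Once this reduction is available, the rest is essentially bookkeeping: stationarity makes the whole construction a ``free amalgam'' of countable elementary submodels $M_\ell$ over $N$, and the Borel verification is a direct translation of that stationarity into the parameter space $(\bI \times S \times \omega) \sqcup N$.
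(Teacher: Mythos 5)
Your proposal is correct and follows essentially the same route as the paper: realizations of the TV-types in $S$ are chosen pairwise forking-independent via the extension property, each $N\cup B(p,r)$ is elementary by the TV-conditions, elementarity of the union is obtained through total triviality, transitivity on singletons, and definability/stationarity (your inlined Tarski--Vaught argument is exactly the paper's Lemma~\ref{lem:elsub}), and the Borel verification proceeds by the same partition-by-classes argument using stationarity and the (countable) $\phi$-type data. The only cosmetic slips --- the space need only be standard Borel rather than Polish, and the degenerate case $\bar a\subset N$ (or $x_0\ind\bar a$, which the paper dispatches via finite satisfiability) needs a one-line separate remark --- do not affect the argument.
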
    

We remark that there is a Borel completeness theorem, which produces a totally Borel model of size continuum for any countable first-order theory with an infinite model. However, the proofs (\cite[Theorem 2.1]{montalban2013borel}, \cite[Theorem 3.1.1]{steinhorn1985chapter}) seem to allow for realizing only countably many types. And there are non-trivial limits to the amount of saturation that can be achieved in general in totally Borel models \cite[Theorem 1.3.3]{steinhorn1985chapter}.

We will need the following general result about building models of monadically stable theories, saying that putting together non-forking elementary submodels of $\UU$ (over a common elementary submodel $\NN$) again yields an elementary submodel.

\begin{lemma} \label{lem:elsub}
Let $ \UU \supset M = \bigcup_{i \in \kappa} M_i$ be such that $\MM_i \prec \UU$ for all $i \in \kappa$, and $M_i \ind M_j$ for all $i \neq j \in \kappa$. Then $\MM \prec \UU$.
	\end{lemma}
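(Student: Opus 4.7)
My plan is to verify the Tarski--Vaught criterion for $\MM \prec \UU$: given a formula $\phi(x, \ybar)$ and parameters $\mbar \subset M$ with $\UU \models \exists x\, \phi(x, \mbar)$, I will produce $a \in M$ with $\UU \models \phi(a, \mbar)$. A preliminary observation is that in a monadically stable theory, forking over $N$ is an equivalence relation on non-constants of $\UU$ (Facts \ref{fact:forking}(1) and \ref{fact:monstab}(2)), and any $m \notin N$ satisfies $m \nind m$ (one checks that the formula $x = m$ forks in the sense of the paper's definition); combined with $M_i \ind M_j$ for $i \neq j$, this forces $M_i \cap M_j = N$, so every non-constant of $M$ lies in a unique $M_i$.

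Fix a witness $a' \in \UU$ of $\phi(a', \mbar)$, and split on whether $a' \ind \mbar$. In the easy case $a' \ind \mbar$, finite satisfiability (Fact \ref{fact:forking}(2)) applied to $\tp(a'/N\mbar)$ yields $a \in N \subset M$ with $\UU \models \phi(a, \mbar)$. Otherwise, total triviality (Fact \ref{fact:monstab}(1)) provides some $m \in \mbar$ with $a' \nind m$; since elements of $N$ never fork with anything, $m \notin N$, and hence $m \in M_{i^*}$ for a unique $i^*$. Set $\mbar^* := \mbar \cap M_{i^*}$ and $\nbar := \mbar \setminus \mbar^*$. Transitivity on singletons (Fact \ref{fact:monstab}(2)) rules out $a' \nind m'$ for any $m' \in M_i$ with $i \neq i^*$: such a pair would yield $m \nind a' \nind m'$, hence $m \nind m'$, contradicting $M_{i^*} \ind M_i$. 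Thus $a'$ is forking-independent from every element of $\nbar$, and $\mbar^*$ is forking-independent from $\nbar$ because $M_{i^*} \ind M_i$ for $i \neq i^*$; the contrapositive of total triviality therefore gives $a' \mbar^* \ind \nbar$.

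Next I apply definability of $\phi$-types (Fact \ref{fact:deftypes}) to produce a formula $\eta(x, \ybar^*)$ over $N$ such that $\UU \models \phi(\cbar, \nbar) \iff \UU \models \eta(\cbar)$ for every tuple $\cbar$ from $N$ of the appropriate arity. Combined with finite satisfiability, this yields the symmetric version---the same equivalence holds for every $\cbar$ with $\cbar \ind \nbar$---because the formula $\phi(x\ybar^*, \nbar) \leftrightarrow \eta(x\ybar^*)$ has parameters in $N\nbar$, is satisfied throughout $N$, and so must be satisfied by any $\cbar$ whose type over $N\nbar$ is finitely satisfiable in $N$. Applied to $\cbar = a' \mbar^*$ this gives $\UU \models \eta(a', \mbar^*)$, hence $\UU \models \exists x\, \eta(x, \mbar^*)$. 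Since $\eta(x, \mbar^*)$ has parameters in $N \cup \mbar^* \subset M_{i^*}$ and $\MM_{i^*} \prec \UU$, some $a \in M_{i^*}$ satisfies $\UU \models \eta(a, \mbar^*)$. Finally $a\mbar^* \subset M_{i^*}$ is forking-independent from $\nbar \subset \bigcup_{i \neq i^*} M_i$, and applying the symmetric form in the reverse direction yields $\UU \models \phi(a, \mbar^*, \nbar) = \phi(a, \mbar)$ with $a \in M$, as required.

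The main obstacle is orchestrating the forking calculus to produce $a' \mbar^* \ind \nbar$, which then lets definability together with finite satisfiability pull $\nbar$ out of $\phi$ and replace it by an $N$-formula $\eta$ that $\MM_{i^*} \prec \UU$ can witness. The key conceptual leverage is that monadic stability reduces set-independence to singleton-independence, so after the preliminary intersection-structure observation the argument proceeds essentially by forking bookkeeping; the only nontrivial step is recognizing that Fact \ref{fact:deftypes} admits the symmetric extension needed to transfer the witness $a'$ to an element of $M_{i^*}$.
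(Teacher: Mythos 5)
Your proof is correct and takes essentially the same route as the paper's: split on whether the witness is forking-independent from $\mbar$, use finite satisfiability in $N$ in the independent case, and otherwise use total triviality and transitivity to isolate a single $M_{i^*}$, obtain $a'\mbar^* \ind \nbar$, replace the parameters outside $M_{i^*}$ by an $N$-definable formula via definability of types, and invoke $\MM_{i^*}\prec\UU$ to find the witness. The only differences are cosmetic: you make explicit the finite-satisfiability argument extending the defining formula's validity to tuples independent from $\nbar$ (which the paper leaves implicit), and you add the unneeded but harmless observation that $M_i\cap M_j=N$.
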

\begin{proof}
	Consider a tuple $\mbar \subset M$ and a formula $\phi(\xbar, y)$ such that $\UU \models \exists y \phi(\mbar, y)$, and let $u \in \UU$ witness the existential. We may assume that $\mbar \cap N = \emptyset$, since otherwise we may add elements from $N$ as parameters to $\phi$ and shrink $\mbar$. Let $I = \set{i \in \kappa | \mbar \cap M_i \neq \emptyset}$, and let $\mbar_i = \mbar \cap M_i$ for every $i \in I$. 
	
	First, suppose $u \ind \mbar$. By Fact \ref{fact:forking}(2), $\tp(u/\mbar)$ is finitely satisfiable in $N$, and so there is some $n \in N$ such that $\UU \models \phi(\mbar, n)$.
	
	So suppose that $u \nind \mbar$. By Fact \ref{fact:monstab}(1), $u \nind M_{i^*}$ for some $i^* \in I$. Let $\mbar'$ be such that $\mbar = \mbar'\mbar_{i^*}$ (up to permutation). By Fact \ref{fact:monstab}(1)-(2), $u\mbar_{i^*} \ind \mbar'$. Let $\phi'(\xbar'; \zbar, y) = \phi(\xbar'\zbar, y)$. Then there is some formula $\psi(y, \zbar)$ such that if $a\bbar \ind \mbar'$, we have $\UU \models \psi(a, \bbar)$ if and only if $\UU \models \phi(\mbar'\bbar, a)$. We have $\UU \models \phi(\mbar'\mbar_{i^*}, u)$, thus $\UU \models \psi(u, \mbar_{i^*})$, and thus $\UU \models \exists y \psi(y, \mbar_{i^*})$. Since $\MM_{i^*} \prec \UU$, we have $\MM_{i^*} \models \exists y \psi(y, \mbar_{i^*})$. If $v$ is a witness for this, then we will have $\UU \models \phi(\mbar'\mbar_{i^*}, v)$, as desired.
	\end{proof}

 By applying Fact \ref{fact:forking}(5), we may find $\mathbf{B} \subset \UU$ consisting of \cont-many realizations $\set{\abar_{p, i} | i \in \cont}$ of every type $p \in S \subset S_{TV}(T)$, %
 and such that $\abar_{p,i} \ind \abar_{q, j}$ if $(p,i) \neq (q, j)$, and thus $\abar_{p,i} \ind \set{\abar_{q,j} | (q, j) \neq (p, i)}$. Thus by Fact \ref{fact:forking}(3), the isomorphism type of $\tb$ is uniquely determined. We identify the domain of $\mathbf{B}$ with $(\bI \times S \times \omega) \sqcup N$ in the natural way, i.e. so that given $\alpha \in \bI$ and $p \in S$, we have $((\alpha, p,  0), (\alpha,p,1), \dots) \models p$.
In the following, for $p\in S$ and $\alpha \in \bI$, we denote by $B(p, \alpha)$ the set $\set{\alpha}\times \set{p} \times \omega$ (cf Fig.~\ref{fig:TB}).

\begin{figure}[h!t]
	\begin{center}
		\includegraphics[width=.9\textwidth]{}
	\end{center}
	\caption{Structure of the totally Borel model $\tb$. 
    In the model, we have $(\alpha,p,i)\ind(\beta,p,j)$ if $(\alpha,p)\neq(\beta,q)$.}
	\label{fig:TB}
\end{figure}

Next, we equip $\mathbf{B}$ with the topology obtained by taking the discrete topology on $N$, the standard topologies on $\bI$ and on $S$, the discrete topology on $\omega$, and the product topology on $\bI \times S \times \omega$.

\begin{theorem}
	The structure $\tb$ is a totally Borel model of $T$, satisfying the conditions of Theorem \ref{thm:totBorel}.
\end{theorem}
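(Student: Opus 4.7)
Conditions (1) and (2) of Theorem~\ref{thm:totBorel} hold by construction --- (1) from the identification of the domain, and (2) from the original application of Fact~\ref{fact:forking}(5) in setting up $\tb$. Three substantive tasks remain: verifying $\tb \prec \UU$ (so in particular $\tb \models T$), checking that the domain is a standard Borel space, and showing that every definable set is Borel.

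For elementarity, my plan is to first show that for each $(p, r) \in S \times \bI$ the substructure $\MM_{(p, r)} \subset \UU$ with universe $B(p, r) \cup N$ satisfies $\MM_{(p, r)} \prec \UU$, via the Tarski--Vaught criterion: the defining conditions on $p \in S_{\sf TV}(T)$ supply a witness in $N$ or in $B(p, r)$ for every existential formula with parameters in $B(p, r) \cup N$. Elements of $N$ are trivially forking-independent from everything, and distinct $B(p, r) \neq B(p', r')$ are pairwise forking-independent by construction, so total triviality (Fact~\ref{fact:monstab}(1)) gives $\MM_{(p, r)} \ind \MM_{(p', r')}$ whenever $(p, r) \neq (p', r')$. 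Lemma~\ref{lem:elsub} then yields $\tb = \bigcup_{(p, r)} \MM_{(p, r)} \prec \UU$. The domain is standard Borel: $\bI$, $S$ (as a Borel subset of the standard Borel space $S_\omega(T)$), $\omega$, and $N$ are all standard Borel, and products and countable disjoint unions preserve this.

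For Borel definability, it suffices to handle formulas $\phi(\bar x)$ with parameters only in $N$, since parameters from $\tb \setminus N$ can be incorporated as additional variables and then specialized via a continuous map. I plan to partition $\tb^k$ by countably many Borel \emph{patterns} $P$, each specifying (i) which positions of $\bar x$ lie in $N$ and the corresponding constants, (ii) an equivalence relation grouping the remaining positions by shared $(\alpha, p) \in \bI \times S$, and (iii) the $\omega$-index of each non-$N$ position. Each matching set $X_P \subset \tb^k$ is Borel, as a product of singletons and ``diagonal'' sets of the form $\set{((\alpha, p, n_1), \dots, (\alpha, p, n_s)) | (\alpha, p) \in \bI \times S}$, and the continuous projection $\pi_P \colon X_P \to S^t$ onto the 1-types of the $t$ groups of $P$ has the property that, by Fact~\ref{fact:forking}(3) (stationarity) together with pairwise forking-independence of distinct groups, all tuples in a given fiber share the same type over $N$. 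Hence $\phi(\tb) \cap X_P = \pi_P^{-1}(W_P)$ for a well-defined set $W_P \subseteq S^t$, and it remains to show each $W_P$ is Borel.

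To show $W_P$ is Borel I would induct on $t$. The base case $t = 1$ is a clopen condition on $p_1 \in S$. For the inductive step, isolate the first group $\bbar_1$, which by Fact~\ref{fact:monstab}(1) is forking-independent from the remaining groups, and apply definability of $\phi$-types (Fact~\ref{fact:deftypes}): for each $q \in S_\phi(T)$ there is a formula $\psi_q(\bar y)$ over $N$ such that $\phi(\bbar_1, \bbar_2, \dots, \bbar_t) \leftrightarrow \psi_q(\bbar_2, \dots, \bbar_t)$ whenever $\tp_\phi(\bbar_1) = q$ (the equivalence extends from $N$-parameters to forking-independent tuples via stationarity). Since $|S_\phi(T)| \leq \az$ by Fact~\ref{fact:forking}(4) and the map $p_1 \mapsto \tp_\phi(\bbar_1)$ is Borel --- each $\phi$-instance over $N$ cuts out a clopen condition on $p_1$ --- $W_P$ is a countable disjoint union of Borel sets, each Borel by induction. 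Summing over the countably many patterns gives $\phi(\tb)$ Borel. I expect the main obstacle to be this inductive step, as it requires translating the model-theoretic content of $\phi$-type definability and stationarity into an explicit Borel description; the rest is either combinatorial bookkeeping or a direct appeal to the preceding lemmas.
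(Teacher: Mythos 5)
Your proposal is correct and takes essentially the same approach as the paper: elementarity via Lemma~\ref{lem:elsub} (with the Tarski--Vaught/TV-type verification made explicit), and Borelness of definable sets by partitioning tuples according to which blocks $B(p,r)$ and constants they meet, then using stationarity, definability of $\phi$-types, and countability of $S_\phi(T)$ to write each piece as a countable union of Borel rectangles. Your pattern/induction-on-groups formulation is just a fully spelled-out version of the higher-arity case that the paper treats explicitly only for one and two variables and otherwise sketches.
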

\begin{proof}

    That $\tb \models T$ is immediate from Lemma \ref{lem:elsub}, and it clearly satisfies the two conditions of Theorem \ref{thm:totBorel}. So it remains to prove $\tb$ is totally Borel.

        We consider the case of formulas with one or two free variables.
    The case of higher-arity formulas follows exactly the same argument but with more cumbersome notation.

    Let $\psi$ be a formula with a single free variable. Then, $\psi(\tb)$ is Borel as
\[
\psi(\tb)=\psi(N)\cup\bigcup_{i<\omega} (\bI\times \set{p\in S| \psi(x_i)\in p}\times\{i\}),
\]    
$\psi(N)$ is countable (hence Borel), and $\set{p\in S| \psi(x_i)\in p}$ is clopen.
\medskip

We now consider a formula $\varphi(x,y)$ with two free variables. 
    It suffices to show that $\varphi(\mathbf B \backslash N)$ is Borel, since 
    \[\varphi(\mathbf B) = \varphi(\tb \backslash N) \cup \bigcup_{n\in N}(\varphi(\tb, n)\times\set{n} \cup \set{n}\times\varphi(n, \tb))\] 
 and the sets $\varphi(\tb,n)$ and $\varphi(n,\tb)$ are Borel (by the above one free variable case).

    Let  $\tb_{\set{x,y}}$ and $\tb_{\set{x}, \set{y}}$ be the subsets of  $(\tb \backslash N)^2$ given by 
    \begin{align*}
    \tb_{\set{x,y}}&=\bigcup_{\mathclap{(p,r) \in S\times \bI}}\quad (B(p, r) \times B(p, r))\\
    \tb_{\set{x}, \set{y}}&=\bigcup_{\mathclap{(p,r) \neq (q, r') \in S \times \bI}}\quad (B(p,r) \times B(q, r'))    
    \end{align*}

Note these sets partition $(\tb \backslash N)^2$, and both are Borel, essentially because the diagonal of $(S\times \bI)^2$ is closed.

  We first show that $\varphi(\tb_{\set{x,y}})$ is Borel. Let $S_{i,j} = \set{p \in S | \varphi(x_i, x_j) \in p}$, which is clopen. Then \[\varphi(\tb_{\set{x,y}}) = \tb_{\set{x,y}}\cap \bigcup_{i, j \in \omega} \bigl((\bI\times S_{i,j}  \times \set{i}) \times (\bI\times S_{i,j}  \times \set{j})\bigr).\]

  We next show that $\varphi(\tb_{\set{x},\set{y}})$ is Borel. Given $q(x_i) \in S_\varphi(T)\cup S_{\varphi^*}(T)$, let $S_{q(x_i)} = \set{p \in S | p \supset q}$, which is closed. By Fact \ref{fact:forking}(3), there is some $Q \subset S_\varphi(T) \times S_{\varphi^*}(T)$ (which is countable by Fact \ref{fact:forking}(4)) such that 
  \[\varphi(\tb_{\set{x},\set{y}}) = 
   \tb_{\set{x},\set{y}}\cap  \bigcup_{\substack{i, j \in \omega\\ \mathclap{(q_1(x_i), q_2(x_j)) \in Q}}}
  \bigl((\bI\times S_{q_1(x_i)}  \times \set{i}) \times (\bI\times S_{q_2(x_j)} \times \set{j})\bigr).\]

 In the higher-arity case, we would have to use induction on formulas with fewer free variables to eliminate the case where some free variables are interpreted as constants, then consider
 all possible partitions of the free variables.
\end{proof}

We note that the definable relations of $\tb$ are not only Borel, but are in the bottom few levels of the Borel hierarchy.

\section{Dismantling models}
\label{sec:dismantle}
In this section, we prove an infinitary analogue of the result characterizing nowhere dense classes via quasi-residuality in \cite{modeling_jsl}, generalized to the monadically stable setting. We recall that quasi-residuality roughly says that in every graph in the class, there is a set $N$ of vertices of bounded size such that the rest of the graph decomposes into small disconnected pieces over $N$.

\begin{definition}
    Let $T$ be a monadically stable theory. Let $\MM \models T$ be equipped with a probability measure on the sigma-algebra generated by parameter-definable sets, and let and $\NN \prec \MM$. Then we say that $\MM$ is \emph{dismantled over $\NN$} if, after naming $N$ by constants, the equivalence classes of the forking relation on singletons have measure 0.

    In the case where $T$ is grounded, we say that $\MM$ is \emph{dismantled} if it is dismantled over the set of constants.
\end{definition}

We now prove that we may always choose some $N$ to dismantle models.

\begin{lemma} \label{lem:0petals}
Let $T$ be monadically stable in a countable language $L$. Let $\MM \models T$ be equipped with a probability measure $\mu$ on the sigma-algebra generated by parameter-definable sets (in one dimension). Then there is some countable $\NN \prec \MM$ such that (after naming every element of $N$ by a constant) for all $m \in M$, we have $P_m = \set{x \in M | x \nind m}$ is measurable and null.
\end{lemma}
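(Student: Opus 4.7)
The plan is to construct $\NN$ by an $\omega$-iteration, at each stage augmenting the model with representatives of all positive-measure forking classes. Since $\mu$ is a probability measure, at most countably many forking classes over any $N_i$ can have positive measure; measurability of $P_m$ itself is immediate because the forking relation is a countable union of parameter-definable sets. Start with any countable $\NN_0 \prec \MM$. Given $\NN_i$, define $\NN_{i+1}$ to be a countable elementary submodel of $\MM$ containing $N_i$ together with one representative $m_{i,k}$ from each positive-measure forking class over $N_i$. Set $\NN := \bigcup_{i<\omega}\NN_i$; this is countable, and $\NN \prec \MM$ by the Tarski-Vaught chain property.

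To show $\mu(P_m)=0$ for all $m$, suppose for contradiction that some $m \in M \setminus N$ has $\mu(P_m)>0$. By the local character of forking in stable theories, $\tp(m/N)$ does not fork over some countable $A_0 \subseteq N$; augmenting the construction at each stage to also absorb the $N$-parameters of defining formulas of $\phi$-types (or invoking that in a countable language monadic stability implies finite local character), we arrange that $A_0 \subset N_i$ for some $i$, and then base monotonicity yields $m \ind_{N_i} N$. The key claim is then that $P_m = P^{N_i}_m$ as subsets of $M$: the forward inclusion ($x \ind_N m \Rightarrow x \ind_{N_i} m$) follows by stable transitivity, since $m \ind_{N_i} N$ together with $m \ind_N x$ give $m \ind_{N_i} Nx$, hence $m \ind_{N_i} x$; the reverse inclusion uses monadic stability crucially, for given $m \ind_{N_i} x$ and $m \ind_{N_i} N$, the total triviality of forking (Fact~\ref{fact:monstab}(1)) yields $m \ind_{N_i} (N \cup \{x\})$, and base monotonicity then gives $m \ind_N x$.

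Thus $\mu(P^{N_i}_m) > 0$, so at stage $i+1$ the construction added some $m^* \in N_{i+1} \subseteq N$ with $m^* \nind_{N_i} m$. By the key claim, $m^* \nind_N m$; but $m^* \in N$ forces $m^* \ind_N m$, since $\tp(m^*/Nm)$ is trivially finitely satisfiable in $N$ by $m^*$ itself, a contradiction. The main obstacle is the reverse inclusion of the key claim, which genuinely requires monadic stability rather than mere stability: in merely stable theories such as vector spaces, forking classes grow rather than shrink as the base model is enlarged, and any iterative construction that adds class representatives is then doomed to fail. A secondary technical point is ensuring that the countable set $A_0$ from local character fits into some finite stage $N_i$, which is handled either by a more careful enumeration in the construction or by appealing to the strong structural consequences of monadic stability in countable languages.
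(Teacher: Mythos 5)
Your proposal has a genuine gap at the pivotal step where you ``arrange that $A_0 \subset N_i$ for some $i$'', i.e.\ that for the offending $m$ there is a stage with $m \ind_{N_i} N$. This hypothesis is what your key claim (and hence the final contradiction) rests on, and it is not justified. Monadic stability does \emph{not} imply superstability, so local character only yields a countable $A_0 \subseteq N$; a countable subset of the union $N=\bigcup_{i<\omega} N_i$ of an $\omega$-chain need not lie in any single stage, and you cannot ``absorb the parameters'' during the construction because $A_0$ depends on $m$ (an arbitrary element of the uncountable $M$) and on the completed $N$. The parenthetical fallback (``monadic stability implies finite local character in countable languages'') is false: the theory of infinitely refining equivalence relations $E_1 \supseteq E_2 \supseteq \cdots$, each class splitting into infinitely many classes of the next, is monadically stable but strictly stable. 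That same example shows the failure is not a bookkeeping issue: put measure $\tfrac12$ along a nested sequence of classes $C_1 \supset C_2 \supset \cdots$; any run of your construction adds at each stage a point of the largest $C_j$ disjoint from the previous stage, and then for $m$ realizing $\bigcap_j C_j$ the type $\tp(m/N)$ forks over \emph{every} $N_i$ (witnessed by $E_j(x,n)$ for the new representative $n \in N_{i+1}\setminus N_i$), so no enumeration can produce a stage with $m \ind_{N_i} N$. (In this example the constructed $N$ does satisfy the lemma, but for a different reason --- it meets every $C_j$ --- so it is the proof of correctness, not necessarily the construction, that breaks.) What you can legitimately extract is weaker: by countable additivity there is a forking formula $\phi(x,m,\nbar)$ of positive measure with finitely many parameters $\nbar \subset N_i$, and such a formula also forks over $N_i$, giving a representative $m^* \in N$ with $m^* \nind_{N_i} m$; but without $m \ind_{N_i} N$ this cannot be promoted to $m^* \nind_N m$, and indeed it should not be --- once a point of the class enters $N$, the class over $N$ splits, exactly as in the paper's own remark following the lemma.

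For comparison, the paper's proof avoids chains and local character entirely. It expands the language by predicates $R_{\phi,r}(\ybar)$ asserting $\mu(\phi(x,\ybar)) \geq r$, takes a countable elementary submodel $\NN'$ of the expanded structure, and lets $\NN$ be its $L$-reduct. If some formula $\phi(x,m)$ forking over $N$ had measure at least $1/r$, one builds in a saturated extension a finitely satisfiable (coheir) sequence of copies of $m$ over $N'$; transitivity of forking on singletons (monadic stability) makes the corresponding instances of $\phi$ pairwise disjoint, producing $r+1$ pairwise disjoint sets each ``of measure $\geq 1/r$'' in the sense of the predicates $R$, contradicting a single first-order sentence of the expanded theory. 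This use of elementarity in the measure-expanded language is the idea your argument is missing; your iterative scheme would need to be replaced or supplemented by something of this kind rather than refined by more careful enumeration.
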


\begin{remark}
Note that not every $\NN \prec \MM$ will work. A simple example is that if some singleton in $M$ has positive measure then it must be included in $N$. Less trivially, $\MM$ might be an equivalence relation with infinitely many infinite classes, and some class might have positive measure even though no singleton does. If $N$ contains no point from this class, then the class will also be a class of the forking equivalence relation. But if $N$ contains some point from the class, then over $N$ each singleton from the class will form its own class of the forking equivalence relation, which will have measure 0.
\end{remark}

We will use that an analogue of Fact \ref{fact:forking}(5) holds for finite satisfiability in a model, even in unstable theories.

\begin{fact} \label{fact:fs ext}
	Let $T$ be an arbitrary theory, and let $\NN \prec \UU \models T$ with $\UU$ sufficiently saturated. Let $a \in \UU$ and $B \supset A \supset N$.
	
	(1) $\tp(a/N)$ is finitely satisfiable in $N$.
	
	(2) If $\tp(a/A)$ is finitely satisfiable in $N$, then there is some $a' \in \UU$ such that $a' \models \tp(a/A)$ and $\tp(a'/B)$ is finitely satisfiable in $N$.
\end{fact}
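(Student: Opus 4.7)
The plan for part (1) is a direct application of the Tarski--Vaught criterion. Given any $\phi(x, \nbar) \in \tp(a/N)$ with $\nbar \subset N$, the element $a$ witnesses $\UU \models \exists x\,\phi(x, \nbar)$; since $\NN \prec \UU$, some $n \in N$ satisfies $\phi(n, \nbar)$. Since finite conjunctions of formulas in $\tp(a/N)$ again lie in $\tp(a/N)$, this already yields finite satisfiability in $N$.

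For part (2), the strategy is to extend $\tp(a/A)$ to a complete type $p(x)$ over $B$ that remains finitely satisfiable in $N$, and then realize $p$ in $\UU$ by saturation. Let $\mathcal P$ be the collection of partial types $q(x)$ over $B$ with $q \supseteq \tp(a/A)$ and $q$ finitely satisfiable in $N$. The hypothesis ensures $\tp(a/A) \in \mathcal P$, and $\mathcal P$ is closed under unions of chains (finite satisfiability is a finite-character property), so Zorn's lemma furnishes a maximal $p \in \mathcal P$.

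The key step is verifying that $p$ is complete. Suppose for contradiction that some formula $\phi(x, \bbar)$ over $B$ has neither $\phi$ nor $\neg\phi$ in $p$. By maximality, neither $p \cup \{\phi\}$ nor $p \cup \{\neg\phi\}$ is finitely satisfiable in $N$, yielding finite $\Sigma_1, \Sigma_2 \subset p$ such that no $n \in N$ satisfies $\bigwedge \Sigma_1(n) \wedge \phi(n, \bbar)$ and no $n \in N$ satisfies $\bigwedge \Sigma_2(n) \wedge \neg\phi(n, \bbar)$. But $\Sigma_1 \cup \Sigma_2 \subset p$ is itself a finite subset of $p$, so some $n \in N$ realizes $\bigwedge(\Sigma_1 \cup \Sigma_2)$; this $n$ satisfies either $\phi(n, \bbar)$ or $\neg\phi(n, \bbar)$, contradicting one of the above. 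Hence $p$ is complete, and any $a' \in \UU$ realizing $p$ (which exists by saturation of $\UU$) has $a' \models \tp(a/A)$ and $\tp(a'/B) = p$ finitely satisfiable in $N$.

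I expect no real obstacle: part (1) is an immediate elementarity argument, and part (2) is the standard construction of a \emph{coheir} of $\tp(a/A)$ over $B$, relying only on elementarity of $\NN$ and saturation of $\UU$ rather than on any stability hypothesis.
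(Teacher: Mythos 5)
Your proof is correct. The paper states this as a Fact without proof, since it is the standard existence result for coheirs: part (1) is exactly the Tarski--Vaught/elementarity observation you give, and part (2) is the usual Zorn's lemma extension of $\tp(a/A)$ to a complete type over $B$ finitely satisfiable in $N$, realized in $\UU$ by sufficient saturation (which, as you implicitly use, must exceed $|B|$ -- harmless here, as ``sufficiently saturated'' is assumed). Your completeness argument for the maximal type, combining the two putative finite obstructions $\Sigma_1,\Sigma_2$ into one finite subset of $p$ and deriving a contradiction, is the standard and correct way to finish.
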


\begin{proof}[Proof of Lemma \ref{lem:0petals}]
Expand the language to $L'$, adding relations $R_{\phi(x; \ybar), r}(\ybar)$ for every $L$-formula $\phi(x; \ybar)$ (without parameters) and every $r \in \mathbb{Q} \cap [0,1]$, and expand $\MM$ to $\MM'$ by interpreting $R_{\phi(x; \ybar), r}(\mbar)$ as ``$\mu(\phi(x; \mbar)) \geq r$'' (note $\MM'$ may no longer be stable). Let $\NN' \prec \MM'$ be countable, and let $\NN$ be the $L$-reduct of $\NN'$. Then we claim $\NN$ is as desired. 

Recall that we have only defined forking when we have named an elementary substructure by constants. For the rest of this proof, we let $L_N$ be the expansion of $L$ by constants for $N$, and say that an $L_N$-formula $\phi(x, \mbar)$ forks over $N$ if it forks in the expansion where we have named $N$ by constants.
	
		For every $m \in M$, we have \[P_m = \bigcup \set{\phi(M, m) | \phi(x, y) \text{ an $L_N$-formula, $\phi(x, m)$ forks over $N$}}.\] So $P_m$ is a countable union of parameter-definable sets, and thus is measurable.
	
	Now suppose there is some $m \in M$ such that $\mu(P_m) > 0$. Then by countable additivity there is some $L_N$-formula $\phi(x, m)$ that forks over $N$ such that $\mu(\phi(x, m)) > 0$. Let $1/r \in \mathbb{Q}$ be such that $\mu(\phi(x, m)) \geq 1/r$. Note that $m \not\in N$, since then $P_m = \emptyset$.
	
	Consider the $L'$-expansions $\NN' \prec \MM'$. Let $\phi(x, y)$ be $\psi(x, y, \nbar)$ where $\psi$ is without parameters from $N$. Note that it is part of ${\rm Th}(\MM')$ that there do not exist elements $(m_i : i \in [r+1])$ such that the sets defined by $\set{\phi(x, m_i) | i \in [r+1]}$ are pairwise disjoint and $R_{\psi(x; y \zbar), r}(m_i\nbar)$ holds for every $i \in [r+1]$.
	
	Now let $\UU' \succ \MM'$ be $\aleph_1$-saturated, and let $\UU$ be its $L$-reduct. Then (by iterating Fact \ref{fact:fs ext}) $U'$ contains a sequence $(m_i : i \in \omega)$ such that  $\tp(m_i/N') = \tp(m/N')$ for all $i \in \omega$ and such that $\tp(m_i/N'm_0 \dots m_{i-1})$ is finitely satisfiable in $N'$ for every $i \in \omega$. In particular $\UU' \models R_{\psi(x; y \zbar), r}(m_i\nbar)$ for every $i \in \omega$. So we will obtain a contradiction if $\set{\phi(U', m_i) | i \in \omega}$ are pairwise disjoint. Working in $\UU$, where finite satisfiability over $N$ and non-forking over $N$ coincide (Fact \ref{fact:forking}(2)), we see that $m_i \ind_N m_j$ for all $i, j \in \omega$. Since forking is transitive on singletons in $\UU$ (Fact \ref{fact:monstab}(2)), if there were some $u \in U$ such that $\UU \models \phi(u, m_i) \wedge \phi(u, m_j)$ for $j \neq i$, then we would have $m_i \nind_N m_j$. So $\set{\phi(U, m_i) | i \in \omega}$ are pairwise disjoint.
\end{proof}

\section{Transferring the measure}
Finally, in this section we show how to place the desired measure on the totally Borel model $\tb$ we have constructed, and prove our main theorems. We continue to assume a countable language throughout.

\label{sec:transfert}

We recall the definition of a measurable structure from \cref{sec:mes struc}, and will use \emph{measurable model} to denote a measurable structure that is a model of a particular theory.

\begin{lemma}
\label{lem:null}
Let $\mathbf M$ be a dismantled measurable model of a grounded monadically stable theory $T$.
Let $k\geq 2$.
The set $\set{\bar u\in \mathbf M^k |  u_i\nind u_j\text{ for some }i<j<k}$ is measurable and has zero measure.
\end{lemma}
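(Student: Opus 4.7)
The plan is to reduce the problem to $k=2$ by a Fubini argument, and then to handle the non-independence set in $M^2$ by decomposing it formula-by-formula and using the countability of $\phi$-type spaces to establish measurability.

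The set in the statement equals $\bigcup_{i<j<k} E_{i,j}$, where $E_{i,j} = \{\bar u \in M^k : u_i \nind u_j\}$. Each $E_{i,j}$ is the preimage of $E_2 := \{(u,v) \in M^2 : u \nind v\}$ under the measurable coordinate projection $M^k \to M^2$ onto slots $(i,j)$, and by Fubini--Tonelli we have $\mu^{\otimes k}(E_{i,j}) = \mu^{\otimes 2}(E_2)$. So it suffices to show that $E_2$ is measurable in $M^2$ with $\mu^{\otimes 2}(E_2) = 0$.

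For measurability of $E_2$, I would write
\[
E_2 = \bigcup_{\phi(x;y)} A_\phi, \qquad A_\phi := \{(u,v) \in M^2 : \mathbf M \models \phi(u;v) \text{ and } \phi(x;v) \text{ forks over } N\},
\]
where $\phi$ ranges over the countably many $L$-formulas. The piece $\phi(\mathbf M^2)$ is definable (hence measurable), so it remains to show that $F_\phi := \{v \in M : \phi(x;v) \text{ forks over } N\}$ is measurable. Here I would use the stability-theoretic equivalence that $\phi(x;v)$ does \emph{not} fork over $N$ iff $v$ satisfies the $\phi$-definition $\psi_q(y)$ of some $q \in S_\phi(T)$; this is derivable directly from the paper's definition of forking combined with extension (Fact~\ref{fact:forking}(5)) and stationarity (Fact~\ref{fact:forking}(3)), since a witness $a \ind v$ with $\mathbf M \models \phi(a;v)$ exists iff $\psi_q(v)$ holds for $q := \tp_\phi(a)$. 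Since $|S_\phi(T)| \leq \az$ by Fact~\ref{fact:forking}(4) and each $\psi_q(y)$ is an $L_N$-formula by Fact~\ref{fact:deftypes}, the set $M \setminus F_\phi = \bigcup_{q \in S_\phi(T)} \psi_q(\mathbf M)$ is a countable union of definable sets; hence $F_\phi$ and $A_\phi$ are measurable, and so is $E_2$.

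For the measure, Fubini--Tonelli gives $\mu^{\otimes 2}(E_2) = \int_M \mu(P_u)\,d\mu(u)$, where $P_u := \{v \in M : u \nind v\}$ is contained in the forking-equivalence class of $u$ over $N$ (via Fact~\ref{fact:forking}(1) and Fact~\ref{fact:monstab}) and therefore has measure zero by the dismantling hypothesis. Thus $\mu^{\otimes 2}(E_2) = 0$ and the lemma follows. The main obstacle is establishing joint measurability of $E_2$: \emph{a priori}, $u \nind v$ is an uncountable disjunction over formulas and parameters from the monster model, and the substantive content of the argument is that the bound on $\phi$-types (Fact~\ref{fact:forking}(4)) collapses it to a countable union of definable sets.
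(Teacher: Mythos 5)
Your proposal is correct and, in substance, is the paper's own argument: the measure-zero part (Fubini reduction to $k=2$, then integrating the null sets $P_u$ supplied by the dismantling hypothesis) is identical, and the measurability part rests on the same two ingredients the paper invokes by deferring to the proof of Lemma~\ref{lem:NFk Borel}, namely stationarity and the countability of $S_\phi(T)$. The only point to flag is your claimed equivalence ``$\phi(x;v)$ does not fork over $N$ iff $\psi_q(v)$ holds for some $q\in S_\phi(T)$''. The right-to-left direction is as you say (extension gives $a\models q$ with $a\ind v$, and finite satisfiability/symmetry pins down that $\phi(a;v)\leftrightarrow\psi_q(v)$). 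But the left-to-right direction is not ``directly'' derivable from Facts~\ref{fact:forking}(3) and (5): under the paper's definition of forking for a formula, it amounts to showing that a non-forking formula $\phi(x;v)$ has a realization independent from $v$, and extension only produces non-forking extensions of complete types over $N$, not of the partial type $\{\phi(x;v)\}$; one needs a small compactness argument (inconsistency of $\phi(x;v)$ together with all definition-schemata over $v$, then combining the finitely many offending formulas into a single witness $\phi'$), or an appeal to the standard theory of forking over models. Fortunately this direction is dispensable: with $G_\phi:=\bigcup_{q\in S_\phi(T)}\psi_q(\mathbf M)$, the easy direction plus stationarity already give $E_2=\bigcup_\phi\bigl(\phi(\mathbf M^2)\cap (M\times(M\setminus G_\phi))\bigr)$, since if $\phi(u;v)$ holds but no $\psi_q(v)$ holds, then every $a\ind v$ satisfies $\neg\phi(a;v)$, whence $u\nind v$ by Fact~\ref{fact:forking}(3); so measurability follows exactly as you argue, without identifying $F_\phi$ with $M\setminus G_\phi$. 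With that step either repaired or rerouted, the proof is complete and matches the paper's.
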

\begin{proof}
The argument for measurability is essentially the same as the proof of Lemma \ref{lem:NFk Borel}.

For measure zero, we first consider the case $k=2$. Given $m \in M$, we let $P_m = \set{x \in M | x \nind m}$. By Lemma \ref{lem:0petals} and the Fubini property, we have
\begin{align*}
    m_2(\set{(u,v) |  u\nind v})&=\int P_x(y)\,{\rm d}m_2(x,y)\\
    &=\int\int P_x(y)\,{\rm d}m_1(x)\,{\rm d}m_1(y)\\
    &=0.
\end{align*}
The general case easily follows, as 
$\set{\bar u\in \mathbf M^k |  u_a\nind u_b}$ has
the same measure as the set 
$\set{\bar u\in \mathbf M^2 |  u_0\nind u_1}\times M^{k-2}$, that is $0$.
(Here, we use the Fubini property to sort the coordinates.)
\end{proof}

Recall from Section \ref{sec:mod th} the function $\tp^k \colon \MM^k \to S_k(T)$. If $\MM$ is a measurable model, we will want to consider the pushforward $\tp^k_*(m_k)$ of the measure on $M^k$ to a measure on $S_k(T)$.

\begin{lemma}
\label{lem:meas_pres}
Let $\mathbf M$ be a dismantled measurable model of a grounded monadically stable theory $T$. Define the probability measures $\mu_1$ and $\mu_k$ on $S_1(T)$ and $S_k(T)$ by $\mu_1=\tp^1_*(m_1)$ and $\mu_k=\tp^k_*(m_k)$.

Then, $\pi_k:   (\NF_k(T), \mu_k) \to (S_1(T)^k, \mu_1^{\otimes k})$ (recalling Definition \ref{def:NF(T)}) is a probability measure preserving Borel isomorphism.
\end{lemma}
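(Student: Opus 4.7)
The plan is to leverage \cref{lem:Boreliso}, which already gives that $\pi_k$ is a continuous Borel isomorphism; the only remaining task is to verify that $(\pi_k)_*\mu_k = \mu_1^{\otimes k}$. Before that, I first confirm that $\mu_k$ is actually concentrated on $\NF_k(T)$, so that treating $\mu_k$ as a probability measure on $\NF_k(T)$ is legitimate. This is immediate from \cref{lem:null}: since whether $u_i \ind u_j$ is determined by $\tp(u_iu_j)$, the preimage $(\tp^k)^{-1}(\NF_k(T))$ is exactly the set of pairwise forking-independent $k$-tuples in $M^k$, which has full $m_k$-measure by \cref{lem:null}.

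For measure preservation, by the $\pi$-$\lambda$ theorem it suffices to verify the equality $\mu_k(\pi_k^{-1}(A)) = \mu_1^{\otimes k}(A)$ on ``boxes'' $A = A_0 \times \cdots \times A_{k-1}$ with each $A_i \subset S_1(T)$ Borel, since these form a $\pi$-system generating the product Borel $\sigma$-algebra on $S_1(T)^k$. By the definition of $\pi_k$, the preimage $(\tp^k)^{-1}(\pi_k^{-1}(A))$ consists of all pairwise forking-independent $\bar u \in M^k$ with $\tp(u_i) \in A_i$ for every $i < k$; and by \cref{lem:null} this set differs from the full product $\prod_{i<k}(\tp^1)^{-1}(A_i) \subset M^k$ by an $m_k$-null set.

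Finally, I would apply the Fubini--Tonelli property built into the definition of a measurable structure to the indicator function of this product set, obtaining
\[
m_k\Bigl(\prod_{i<k}(\tp^1)^{-1}(A_i)\Bigr) = \prod_{i<k} m_1\bigl((\tp^1)^{-1}(A_i)\bigr) = \prod_{i<k}\mu_1(A_i) = \mu_1^{\otimes k}(A).
\]
Combined with the previous step, this yields $\mu_k(\pi_k^{-1}(A)) = m_k((\tp^k)^{-1}(\pi_k^{-1}(A))) = \mu_1^{\otimes k}(A)$, as desired. I do not anticipate a serious obstacle here: all the substantive work has been front-loaded into \cref{lem:Boreliso}, \cref{lem:null}, and the Fubini axiom of a measurable model, so this lemma amounts to stitching these ingredients together via a standard $\pi$-$\lambda$ argument.
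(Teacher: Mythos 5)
Your proposal is correct and follows essentially the same route as the paper: the paper also reduces measure preservation to product boxes, uses \cref{lem:null} to discard the forking-dependent tuples, and applies the Fubini axiom of a measurable model to factor the measure, with the Borel-isomorphism part supplied by \cref{lem:Boreliso}. The only cosmetic difference is that the paper checks the identity on clopen boxes $\clop(\phi_0)\times\dots\times\clop(\phi_{k-1})$ and leaves the uniqueness-of-measures step implicit, whereas you use general Borel boxes and invoke the $\pi$-$\lambda$ theorem explicitly.
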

\begin{proof}
    Consider the subset $K=\clop(\phi_0)\times\dots\times\clop(\phi_{k-1})$ of $S_1(T)^k$.
    Then, using Lemma \ref{lem:null} and the Fubini property, we have
    \begin{align*}
    \mu_k(\pi_k^{-1}(K))&=m_k\biggl(\Set{\bar x| x_i\ind x_j\ (\forall i<j<k)\text{ and }\mathbf M\models\bigwedge \phi_i(x_i)}\biggr)\\ 
    &=m_k\biggl(\Set{\bar x| \mathbf M\models\bigwedge \phi_i(x_i)}\biggr)\\ 
    &=\prod_{i<k}m_1(\phi_i(\mathbf M))\\
    &=\prod_{i<k}\mu_1(\clop(\phi_i))\\
    &=\mu_1^{\otimes k}(K).
    \end{align*} 
\end{proof}

\begin{lemma}
Let $T$ be a grounded monadically stable theory.
Then, for every dismantled measurable model $\mathbf M$ of $T$ and every formula $\varphi(\bar x)$ with $k$ free variables we have
\[
m_k(\varphi(\mathbf M))=\idotsint I_\varphi(\pi_k^{-1}(t_0,\dots,t_{k-1}))\,{\rm d}\mu_1(t_0)\dots{\rm d}\mu_1(t_{k-1}),
\]
where $I_\varphi$ is the indicator function of $\clop(\varphi)$ and $\mu_1=\tp^1_*(m_1)$.
\end{lemma}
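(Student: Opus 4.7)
The plan is to unwind the right-hand side using the two preceding lemmas, translating everything back to a measure on $M^k$, where Lemma \ref{lem:null} tells us we may safely ignore the forking-dependent tuples.

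First, I rewrite the right-hand integral as a measure. Since $\pi_k \colon \NF_k(T) \to S_1(T)^k$ is a bijection by Lemma \ref{lem:Boreliso}, the integrand $I_\varphi \circ \pi_k^{-1}$ is the indicator function of the set $\pi_k(\clop(\varphi) \cap \NF_k(T)) \subset S_1(T)^k$. Hence
\[
\idotsint I_\varphi(\pi_k^{-1}(t_0,\dots,t_{k-1}))\,{\rm d}\mu_1(t_0)\dots{\rm d}\mu_1(t_{k-1}) = \mu_1^{\otimes k}\bigl(\pi_k(\clop(\varphi) \cap \NF_k(T))\bigr).
\]
By Lemma \ref{lem:meas_pres}, $\pi_k$ is measure-preserving between $(\NF_k(T), \mu_k)$ and $(S_1(T)^k, \mu_1^{\otimes k})$, so the above equals $\mu_k(\clop(\varphi) \cap \NF_k(T))$.

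Next, I push this back down to $\mathbf M^k$ via $\tp^k$. Since $\mu_k = \tp^k_*(m_k)$ by definition, we have
\[
\mu_k(\clop(\varphi) \cap \NF_k(T)) = m_k\bigl((\tp^k)^{-1}(\clop(\varphi) \cap \NF_k(T))\bigr).
\]
Now $(\tp^k)^{-1}(\clop(\varphi)) = \varphi(\mathbf M)$, and by the definition of $\NF_k(T)$,
\[
(\tp^k)^{-1}(\NF_k(T)) = \set{\bar u \in M^k |  u_i \ind u_j \text{ for all } i < j < k}.
\]
By Lemma \ref{lem:null}, the complement of this set in $M^k$ has $m_k$-measure zero. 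Therefore $(\tp^k)^{-1}(\clop(\varphi) \cap \NF_k(T))$ differs from $\varphi(\mathbf M)$ only by a null set, so its $m_k$-measure equals $m_k(\varphi(\mathbf M))$, as required.

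The proof is essentially a chain of identifications; the two substantive inputs (Lemma \ref{lem:meas_pres} and Lemma \ref{lem:null}) have already been established. The only point requiring mild care is keeping track of which identities hold on the nose and which hold only modulo null sets; this is the step invoking Lemma \ref{lem:null}, where dismantledness of $\mathbf M$ is essential.
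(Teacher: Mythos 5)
Your proposal is correct and follows essentially the same route as the paper's proof, relying on exactly the same two inputs (Lemma \ref{lem:null} for discarding forking-dependent tuples and Lemma \ref{lem:meas_pres} for transporting the measure along $\pi_k$); you merely run the chain of identifications from the right-hand side to the left, whereas the paper starts from $m_k(\varphi(\mathbf M))$ and pushes forward. The bookkeeping (identifying $(\tp^k)^{-1}(\clop(\varphi)\cap \NF_k(T))$ with $\varphi(\mathbf M)$ up to an $m_k$-null set) is handled correctly.
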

\begin{proof}
Let $\mu_k=\tp^k_*(m_k)$.
According to Lemmas~\ref{lem:null} and~\ref{lem:meas_pres}, we have
\begin{align*}
m_k(\varphi(\mathbf M))&=m_k\bigl(\varphi(\mathbf M)\cap\set{\bar v\in M^k |  v_i\ind v_j\ \forall i<j<k}\bigr)\\
&=\int_{NF_k(T)} I_\varphi(p)\,{\rm d}\mu_k(p)\\
&=\idotsint I_\varphi(\pi_k^{-1}(t_0,\dots,t_{k-1}))\,{\rm d}\mu_1(t_0)\dots{\rm d}\mu_1(t_{k-1}).
\end{align*}
\end{proof}
\begin{corollary}
	\label{cor:1tok}
Let $\mathbf M$ and $\mathbf M'$ be dismantled measurable models of a grounded monadically stable theory $T$.
If $\tp^1_*(m_1)=\tp^1_*(m_1')$, then
$\tp^k_*(m_k)=\tp^k_*(m_k')$ for all $k<\omega$.
\end{corollary}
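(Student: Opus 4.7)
The plan is to let the preceding lemma do essentially all the work, reducing the corollary to a routine measure-theoretic argument. Fix $k<\omega$ and a formula $\varphi(\bar x)$ with $k$ free variables. Applying the previous lemma to $\mathbf M$ and to $\mathbf M'$ separately, I obtain
\[
m_k(\varphi(\mathbf M))=\idotsint I_\varphi(\pi_k^{-1}(t_0,\dots,t_{k-1}))\,{\rm d}\mu_1(t_0)\dots{\rm d}\mu_1(t_{k-1}),
\]
\[
m_k'(\varphi(\mathbf M'))=\idotsint I_\varphi(\pi_k^{-1}(t_0,\dots,t_{k-1}))\,{\rm d}\mu_1'(t_0)\dots{\rm d}\mu_1'(t_{k-1}),
\]
where $\mu_1=\tp^1_*(m_1)$ and $\mu_1'=\tp^1_*(m_1')$. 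By hypothesis $\mu_1=\mu_1'$, so the two right-hand sides coincide and $m_k(\varphi(\mathbf M))=m_k'(\varphi(\mathbf M'))$.

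Next, I would translate this back into a statement about the pushforward measures on $S_k(T)$. For every formula $\varphi$ with $k$ free variables, the definition of $\tp^k_*$ gives
\[
\tp^k_*(m_k)(\clop(\varphi))=m_k(\varphi(\mathbf M))=m_k'(\varphi(\mathbf M'))=\tp^k_*(m_k')(\clop(\varphi)).
\]

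Finally, I would invoke a standard uniqueness argument to lift this from clopens to all Borel sets. Since $\clop(\varphi)\cap\clop(\psi)=\clop(\varphi\wedge\psi)$, the family $\set{\clop(\varphi) |  \varphi\in {\sf FO}_k}$ is a $\pi$-system, and it generates the Borel $\sigma$-algebra of $S_k(T)$ by definition of the Stone topology. Hence, by the uniqueness part of the Carath\'eodory extension theorem (equivalently, Dynkin's $\pi$--$\lambda$ theorem), the two Borel probability measures $\tp^k_*(m_k)$ and $\tp^k_*(m_k')$ must coincide on all Borel subsets of $S_k(T)$.

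There is no real obstacle here: the technical content (handling the forking--dependent null set via Lemma \ref{lem:null}, and the measure-preserving identification of $\NF_k(T)$ with $S_1(T)^k$ via Lemma \ref{lem:meas_pres}) has already been absorbed into the preceding lemma, so this corollary is genuinely routine once that lemma is in hand.
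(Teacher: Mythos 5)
Your proposal is correct and matches the paper's intended derivation: the corollary is stated without proof precisely because, as you note, the preceding lemma expresses $m_k(\varphi(\mathbf M))$ purely in terms of $\mu_1=\tp^1_*(m_1)$ and the model-independent map $\pi_k$, so equality of the one-dimensional pushforwards immediately forces equality on every $\clop(\varphi)$, hence (by the standard $\pi$-system argument you give) on all Borel sets. The only thing you add beyond the paper is spelling out that last uniqueness step explicitly, which is fine.
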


We are now ready to prove \Cref{thm:tb}. In the statement, in order to emphasize that the $\sigma$-algebra on $\tb$ is not inherited from $\MM$, we describe $\tb$ as a separate structure that elementarily embeds into $\MM$, rather than as an elementary substructure. 
\ThmTB*
\begin{proof}
	By Lemma~\ref{lem:0petals}, $\mathbf M$ has a countable elementary substructure $\NN$ such that, if we expand by constants for the elements of $N$, the theory $T$ is grounded and the model $\mathbf M$ is dismantled. So we expand $\MM$ by constants for $N$ and replace $T$ by the theory of the resulting structure. 
    
	Let $\mu_1=\tp^1_*(m_1)$, where $m_1$ is the measure on $\mathbf M^1$.
	Let $\supp(\mu_1)$ be the support of $\mu_1$, that is  the largest closed subset of $S_1(T)$  for which every open neighborhood of every point of the set has positive $\mu_1$-measure.
    Let $\widehat N=\set{\tp(c) |  c\in N}$.
	Recall the function $\proj \colon S_\omega \to S_1$ from Notation \ref{not:proj}. The set $(\proj)^{-1}(\supp(\mu_1)\setminus \widehat N)$ is a closed subset of $S_\omega(T)$. We define
	$S:=S_{\sf TV}(T)\cap (\proj)^{-1}(\supp(\mu_1)\setminus \widehat N)$. (The use of $\supp(\mu_1)$ here rather than all of $S_1(T)$ is not needed for this theorem, but is crucial for Theorem \ref{thm:removal}.)  By Lemma~\ref{lem:TV},  $S_{\sf TV}(T)$ is Borel, and so
	the set $S$ is a Borel subset of $S_\omega(T)$. Note that, according to Lemma~\ref{lem:proj}, $\proj(S)=\proj(S_{\sf TV})\cap (\supp(\mu_1)\setminus \widehat N)=\supp(\mu_1)\setminus\set{\tp(c) |  c\in N}=\supp(\mu_1)\setminus \widehat N$.

	Let $\tb$ be as in Theorem \ref{thm:totBorel}, and fix an elementary embedding $\zeta:\tb\rightarrow\mathbf M$. 
    The map $\proj$
    induces a Borel surjection from $S$ onto 
     $\supp(\mu_1)\setminus \widehat N$ (which are both standard Borel spaces). Thus, we can apply Theorem \ref{thm:Bog} to obtain a measure $\nu_0$ on $S$ such that $(\proj)_*(\nu_0)=\mu_1$. 
    Letting $\lambda$ be the usual Borel measure on $\bI$ and $\delta$ be the probability measure on $\omega$ concentrated on $\set{0}$, we extend $\nu_0$ to a measure on $\nu$ on $\bI\times S\times \omega\sqcup N$ (that is, to $\tb$) by assigning 
    \[\nu(X) = \lambda\otimes\nu_0\otimes\delta(X\setminus N)+\mu_1(\set{\tp(c) |  c\in X\cap N}).\]
    (Recall that  $X\cap N$, being countable, is a Borel set of $\tb$.)

    Note that considering the product measure of $\lambda,\nu_0$, and $\delta$ on $\bI\times S\times\omega$ ensures that no singleton will have a positive measure. 
    It follows 
 that $\tb$ is dismantled, as $u\nind v$ implies that $u$ and $v$ belong to the same countable tuple $B(p, r)$.

 We now show that
    $\tp^1_*(\nu)= \mu_1 = \tp^1_*(m_1)$. 
    For $X \subset S_1(T)$, we have
\begin{align*}
    \tp^1_*(\nu(X))&=
    \lambda\otimes \nu_0\otimes \delta\bigl(\set{(\alpha,p,i)\in \bI\times S\times\omega|\tp((\alpha,p,i))\in X}\bigr)+\mu_1(X\cap \widehat N).
        \intertext{Hence, as $\delta$ is concentrated on $\set{0}$ and as $\tp((\alpha,p,0))=\proj(p)$ for all $\alpha\in \bI$ and $p\in S$, we have}
    \tp^1_*(\nu(X))&=
    \lambda\otimes \nu_0\otimes \delta\bigl(\bI\times\set{p\in S| \proj(p)\in X}\times\set{0}\bigr)+\mu_1(X\cap \widehat N).
\intertext{Thus, as $\proj$ is a Borel surjection from $S$ to $\supp(\mu_1)\setminus \widehat N$ with pushforward $\mu_1$, and $\lambda(\bI) = 1 = \delta(\set{0})$, we have}
    \tp^1_*(\nu)(X)&=
    (\proj)_*(\nu_0)(X\cap (\supp(\mu_1)\setminus \widehat N))+\mu_1(X\cap \widehat N)\\
    &=\mu_1(X\setminus \widehat N)+\mu_1(X\cap \widehat N)\\
    &=\mu_1(X).
\end{align*}   
    Hence,  by Corollary \ref{cor:1tok}, 
    we have $\tp_*(\nu^{\otimes k})=\tp_*(m_k)$ for every integer $k$. In other words, for every formula $\psi$ without parameters and with $k$ free variables, we have $\nu^{\otimes k}(\psi(\mathbf M))=m_k(\psi(\mathbf M))$.

    We now consider a formula $\varphi(\bar x,\bar y)$ with $k+p$ free variables and a tuple $\bar b\in (\tb\setminus N)^p$ of parameters (indeed, if a parameter belongs to $N$, we can replace it by the corresponding constant). By Fact \ref{fact:forking}(3)-(4),
    there exist countably many formulas $\psi_i$ (depending on $\bar b$) such that 
    \begin{align*}
    \varphi(\tb,\bar b)\cap\set{\bar x\in \tb^k| \bar x\ind \bar b}&=\biggl(\bigcup_i \psi_i(\mathbf M)\biggr)\cap\set{\bar x\in \tb^k| \bar x\ind \bar b}\\
    \varphi(\mathbf M,\zeta(\bar b))\cap\set{\bar x\in \mathbf M^k| \bar x\ind \zeta(\bar b)}&=\biggl(\bigcup_i \psi_i(\mathbf M)\biggr)\cap\set{\bar x\in \mathbf M^k| \bar x\ind \zeta(\bar b)},
    \end{align*}
    where the second line follows from the fact that $\zeta$ is an elementary embedding.

    As both $\tb$ and $\mathbf M$ are dismantled, we deduce that 
    \begin{align*}
    \nu^{\otimes k}(\varphi(\tb,\bar b))&=
    \nu^{\otimes k}\biggl(\varphi(\tb,\bar b)\cap\set{\bar x\in \tb^k| \bar x\ind \bar b}\biggr)\\
    &=\nu^{\otimes k}\biggl(\bigcup_i \psi_i(\tb)\cap\set{\bar x\in \tb^k| \bar x\ind \bar b}\biggr)\\
    &=\nu^{\otimes k}\biggl(\bigcup_i \psi_i(\mathbf B)\biggr)\\
    &=m_k\biggl(\bigcup_i \psi_i(\mathbf M)\biggr)\\
    &=m_k\biggl(\bigcup_i \psi_i(\mathbf M)\cap\set{\bar x\in \mathbf M^k| \bar x\ind \zeta(\bar b)}\biggr)\\
    &=m_k\biggl(\varphi(\mathbf M,\zeta(\bar b))\cap \set{\bar x\in \mathbf M^k| \bar x\ind \zeta(\bar b)}\biggr)\\
    &=m_k(\varphi(\mathbf M,\zeta(\bar b))).
    \end{align*}
 \end{proof}

We now prove our main result on the existence of modeling limits.
\ThmMain*
\begin{proof}
Let $(\mathbf M_n)_{n\in\mathbb N}$ be an {\sf FO}-convergent sequence of structures from $\mathscr C$.
	We consider the ultraproduct $\UU$ of the structures $\mathbf M_n$ with respect to a non-principal ultrafilter.
	Then $\UU \models T_\CC$ (in the notation of Definition \ref{def:stable}), so $\UU$ is  monadically stable.
	We denote by $m_k$ the Loeb measure on the subsets of $\UU^k$ obtained as the ultralimit of the uniform probability measures on the $\mathbf M_n^k$. Note that every internal subset of $\UU^k$ is $m_k$-measurable.
    According to {\L}o\'s's theorem, these include
    all the subsets of $\UU^k$ that are definable with parameters. As a consequence, as noted in \cite{CMUC},  for every formula $\varphi\in{\sf FO}_k$ we have
\[
m_k(\varphi(\UU))=\lim_{n\rightarrow\infty}\langle\varphi,\mathbf M_n\rangle.
\]
Moreover,  for every $m_k$-measurable subset $X$ of $\UU^k$, we have \cite{CMUC}:
\[
m_k(X)=\idotsint I_X(x_1,\dots,x_k)\,{\rm d}m_1(x_1)\dots{\rm d}m_1(x_k),
\]
where $X$ is the indicator function of $X$, and this does not depend on the order of integrations. Hence, in our terminology, this means that $\mathbf U$ equipped with the probability measures $m_k$ is a measurable model.

	We consider a $\cont$-saturated $\UU^+ \succ \UU$ and extend the Loeb measures to $\UU^+$ via $m_k^+(X)=m_k(X\cap \UU^k)$ (using definability of types to see that $X\cap \UU^k$ is parameter-definable in $\UU$ if 
    $X$ is parameter-definable in $\UU^+$). 
    It follows that $\UU^+$ is also a mesurable model.

	According to Theorem~\ref{thm:tb} there exists a totally Borel model $\tb$ equipped with a Borel probability measure $\nu$, giving the same measure as $\UU^+$ to all definable sets.
	It follows that $\tb$ is a modeling ${\sf FO}$-limit of $(\mathbf M_n)_{n\in\mathbb N}$.
\end{proof}

Before continuing with  the proof of Theorem \ref{thm:removal} we take time out for a lemma.

\begin{lemma} \label{lem:rem1}
Let $(\mathbf B, \nu)$ be the modeling constructed in Theorem \ref{thm:tb}. For every formula $\varphi$ with $k$-free variables
the following are equivalent:
\begin{enumerate}
    \item there exist distinct $v_0,\dots,v_{k-1}\in \bI\times S\times\set{0}$ such that $\tb\models\varphi(\bar v)$;
    \item $\langle\varphi,\tb\rangle>0$.
\end{enumerate}
\end{lemma}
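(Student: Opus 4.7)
The plan is to establish each direction separately, with the technical work concentrated in $(1)\Rightarrow(2)$.

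For $(2)\Rightarrow(1)$: The measure $\nu$ restricted to $\bI\times S\times\set{0}$ equals $\lambda\otimes\nu_0\otimes\delta$ with $\lambda$ atomless, and in the relevant atomless regime for $\mu_1$ the $N$-part of $\nu$ is null, so $\nu^{\otimes k}$ is concentrated on $(\bI\times S\times\set{0})^k$. Since the diagonal is $\lambda^k$-null, positive $\nu^{\otimes k}(\varphi(\tb))$ immediately supplies distinct witnesses in $\bI\times S\times\set{0}$.

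For $(1)\Rightarrow(2)$: distinct $v_i=(\alpha_i,p_i,0)$ lie in distinct blocks $B(p_i,\alpha_i)$, so by Theorem \ref{thm:totBorel}(2) together with total triviality (Fact \ref{fact:monstab}(1)) they are jointly forking-independent over $N$, with one-types $q_i:=\tp(v_i)=\proj(p_i)\in\supp(\mu_1)$. The proof of Theorem \ref{thm:tb} combined with Lemma \ref{lem:meas_pres} gives $\nu^{\otimes k}(\varphi(\tb))=\mu_1^{\otimes k}(E_\varphi)$, where $E_\varphi:=\pi_k(\clop(\varphi)\cap\NF_k(T))$. It therefore suffices to prove the following claim: \emph{if $E_\varphi$ contains a tuple with each coordinate in $\supp(\mu_1)$, then $\mu_1^{\otimes k}(E_\varphi)>0$}.

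I would prove this claim by induction on $k$. The base case $k=1$ is immediate since $E_\varphi=\clop(\varphi)$ is a clopen neighborhood of a $\supp(\mu_1)$-point. For the inductive step, fix a forking-independent realization $\bar v^*\in\MM^k$ of $(q_0,\dots,q_{k-1})$ with $\varphi(\bar v^*)$. Applying Fact \ref{fact:deftypes} with partition $\varphi(\bar y;x_0)$ and parameter tuple $\bar v_{-0}^*$ produces a formula $\psi'(x_0)$ over $N$ with $\varphi(\bar v_{-0}^*,w)\iff\psi'(w)$ for every $w\ind\bar v_{-0}^*$; since $\psi'(v_0^*)$ holds and $q_0\in\supp(\mu_1)$, we have $\mu_1(\clop(\psi'))>0$. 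For each $q_0'\in\clop(\psi')\cap\supp(\mu_1)$, a non-forking realization $w_0'\models q_0'$ (Fact \ref{fact:forking}(5)) satisfies $\varphi(w_0',\bar v_{-0}^*)$, placing $(q_0',q_1,\dots,q_{k-1})$ into $E_\varphi$. Applying Fact \ref{fact:deftypes} once more, now to $w_0'$ with partition $\varphi(x_0;\bar y)$, the section $E_\varphi^{(q_0')}:=\set{(q_1',\dots,q_{k-1}') | (q_0',q_1',\dots,q_{k-1}')\in E_\varphi}$ coincides with $E_{\psi_{q_0'}}$ for the $(k-1)$-variable defining formula $\psi_{q_0'}$ of $\tp_\varphi(w_0')$, and contains $(q_1,\dots,q_{k-1})$. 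The induction hypothesis applied to $\psi_{q_0'}$ gives $\mu_1^{\otimes(k-1)}(E_\varphi^{(q_0')})>0$, and a Fubini integration then yields
\[
\mu_1^{\otimes k}(E_\varphi)\geq\int_{\clop(\psi')}\mu_1^{\otimes(k-1)}(E_\varphi^{(q_0')})\,d\mu_1(q_0')>0.
\]

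The main subtlety I expect to handle carefully is that one cannot directly exhibit a product of clopen neighborhoods in $S_1(T)^k$ contained in $E_\varphi$ around $(q_0,\dots,q_{k-1})$: the closed fiber $\tp_\varphi^{-1}(\sigma_0)$ corresponding to the $\varphi$-type $\sigma_0$ of the specific realization $v_0^*$ need not have positive $\mu_1$-measure, even though it contains $q_0\in\supp(\mu_1)$. The induction bypasses this by exhibiting a positive-measure family of alternative $q_0'$'s, each giving rise to its own defining formula $\psi_{q_0'}$ whose $E$-set already contains $(q_1,\dots,q_{k-1})$.
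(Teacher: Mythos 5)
Your proof is correct, and it runs on the same two engines as the paper's: definability/stationarity of $\varphi$-types over $N$ for forking-independent tuples, plus the fact that the witnesses' $1$-types lie in $\supp(\mu_1)$, so defining formulas get positive $\mu_1$-measure; also the induction on $k$ is structured the same way (a defining formula $\psi'$ for the tail parameters gives positive measure to the first coordinate, and the remaining coordinates are handled by a second application of definability and the inductive hypothesis). Where you differ is in the ambient space and the decomposition: the paper argues directly inside $\tb$ with the measure $\nu$ and the specific witnesses $v_1,\dots,v_{k-1}$, and at the inductive step uses a countable pigeonhole over the defining formulas $\psi_a$ of $\tp_\varphi(a)$ to extract a single formula $\psi$ with $\nu(A_\psi)>0$ and $\tb\models\psi(v_1,\dots,v_{k-1})$, then bounds $\nu^{\otimes k}(\varphi(\tb))\geq\nu(A_\psi)\,\nu^{\otimes(k-1)}(\psi(\tb))$; you instead transfer everything to type space via Lemma~\ref{lem:meas_pres}, reformulate the statement as ``$E_\varphi$ meets $\supp(\mu_1)^k$ implies $\mu_1^{\otimes k}(E_\varphi)>0$'', and integrate the inductive hypothesis pointwise over first-coordinate sections. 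Your version avoids the pigeonhole and any measurable-selection issue for $q_0'\mapsto\psi_{q_0'}$, since measurability of the section measure comes from Fubini applied to the Borel set $E_\varphi$ (Borel because $\pi_k$ is a Borel isomorphism on $\NF_k(T)$), at the cost of having to verify the section identity $E_\varphi^{(q_0')}=E_{\psi_{q_0'}}$, which you do correctly using that the $\varphi$-type is determined by the $1$-type over $N$ together with total triviality. One remark on $(2)\Rightarrow(1)$: your hedge that the $N$-part of $\nu$ is null ``in the relevant atomless regime'' matches the paper, which likewise asserts (without proof) that the complement of $\bI\times S\times\{0\}$ is $\nu$-null and $\nu$ is atomless; this is what holds in the intended applications (Loeb measures of sequences of finite structures of unbounded size, where singletons and hence $\widehat N$ are null), and neither your argument nor the paper's covers the degenerate atomic case.
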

\begin{proof}
$(2) \Rightarrow (1)$ is immediate from the fact that the complement of $\bI \times S \times \set{0}$ has measure 0, and from the fact that $\nu$ is atomless. So we now continue to $(1) \Rightarrow (2)$. We proceed by induction on $k$, with the case $k=1$ immediate.
First note that if $v_0,\dots,v_{k-1}$ are distinct elements of $\bI\times S\times\set{0}$, then they are forking-independent.

\begin{claim*}
For any $\bbar = b_1, \dots, b_{k-1} \in \tb$, if there exists $v\in\bI\times S\times\set{0}$ such that $v \ind \bbar$ and
$\tb\models\varphi(v, \bbar)$, then
\[
\int I_\varphi(x, \bbar)\,{\rm d}\nu(x)>0.
\]
\end{claim*}
\begin{claimproof}
    By definability of types, there exists $\psi_{\bar b}(x)$ such that for all $x$ forking-independent from $\bbar$ we have
\[
\varphi(x,\bbar)\quad\iff\quad\psi_{\bar b}(x).
\]
As $\psi_{\bar b}\in\tp(v)$ and $\tp(v)$ belongs to the support of $\mu_1$, we have
$\mu_1(\clop(\psi_{\bar b}))>0$.
Thus, (as almost all $x$ are forking-independent from $\bbar$), we have 
\[
\int I_\varphi(x,\bbar)\,{\rm d}\nu(x)=
\int I_{\psi_{\bar b}}(x)\,{\rm d}\nu(x)=
\mu_1(\clop(\psi_{\bar b}))>0.
\]
\end{claimproof}

Let $A=\varphi(\tb,v_1,\dots,v_{k-1})$.
    By the assumption of $(1)$ and the Claim, we know that $\nu(A)>0$. 
    For each $a\in A$ there exists $\psi_a$ such that for every $\bar c$ forking-independent from $a$ we have $\varphi(a,\bar y)\leftrightarrow\psi_a(\bar c)$. As there are only countably many formulas, there exists some $\psi$ such that the set $A_\psi:=\set{a\in A |  \psi_a=\psi}$ has positive measure $\alpha$.
    Hence,
\[
\nu^{\otimes k}(\varphi(\tb))\geq\alpha\nu^{\otimes (k-1)}(\psi(\tb)).
\]
Moreover, $\tb\models\psi(v_1,\dots,v_{k-1})$. Thus, by the inductive hypothesis, $\nu^{\otimes (k-1)}(\psi(\tb))>0$. Hence, $\nu^{\otimes k}(\varphi(\tb))>0$.
\end{proof}

We now give a proof of our second main result.
\ThmRemoval*
\begin{proof}
	First note that, by modifying each relation on a null-set, we can assume that $\mathbf L$ is a Borel structure\footnote{However, if we assume only that the structure $\tb$ is  Lebesgue, we cannot modify it in a simple way to obtain a Borel structure while keeping the condition $\varphi(\tb)\neq\emptyset\Rightarrow\langle\varphi,\tb\rangle>0$.}. 
	Let ${\rm Age}(\mathbf L)$ denote the \emph{age} of $\mathbf L$, that is the class of the finite induced substructures of $\mathbf L$.
	As we can assume $\mathbf L$ to be Borel, according to \cite[Lemma~5]{QFTSL}, a random sampling of $\mathbf L$ can be used to  construct a sequence $(\mathbf M_n)_{n\in\mathbb N}$ of finite structure in ${\rm Age}(\mathbf L)$ left-converging to $\mathbf L$. As $\mathbf L$ is monadically stable, the class ${\rm Age}(\mathbf L)$ is also monadically stable by \cite[Proposition 2.5]{braunfeld2022existential}. 
By Theorem \ref{thm:main}, let $\mathbf B$ be the modeling limit of $(\mathbf M_n)_{n\in\mathbb N}$ that we have constructed. By Lemma \ref{lem:rem1},
for every formula $\varphi$ with $k$ free variables with $\varphi\rightarrow\bigwedge_{i<j<k}(x_i\neq x_j)$  we have
\[
\varphi(\tb)\cap(\bI \times S \times \set{0})^k\neq\emptyset\quad\iff\quad \langle\varphi,\tb\rangle>0.
\]
Let $\tb'$ be the substructure of $\tb$ with domain $\bI \times S \times \set{0}$. Note that $\tb'$ is a Borel structure.
For every quantifier-free formula $\varphi$
with $k$ free variables with $\varphi\rightarrow\bigwedge_{i<j<k}(x_i\neq x_j)$,
we have
\[\langle\varphi,\tb'\rangle=\langle\varphi,\tb\rangle=\langle\varphi,\mathbf L\rangle.
\]
Moreover, 
$\varphi(\tb')\neq \emptyset$ implies $\varphi(\tb)\cap (\bI \times S \times \set{0})^k\neq\emptyset$, which implies $\langle\varphi,\tb\rangle>0$, thus 
$\langle\varphi,\tb'\rangle>0$.
\end{proof}

\section{Concluding remarks}
\label{sec:conc}
In this paper, we proved that every {\sf FO}-convergent sequence of structures from a monadically stable class has a modeling {\sf FO}-limit (\Cref{thm:main}). We conjecture that, for graphs, one can further require this modeling $\mathbf M$ to be a \emph{strong modeling}, meaning that it satisfies the \emph{strong finitary mass transport principle}: for all measurable subsets $A$ and $B$ of $\mathbf M$ such that each vertex of $A$ has at least $a$ neighbors in $B$ and each vertex in $B$ has at most $b$ neighbors in $A$, then
$a\,\nu(A)\leq b\,\nu(B)$.
For results on the existence of strong modeling limits for restricted classes of graphs, we refer the interested reader to \cite{gajarsky2016first, grzesik2021strong,limit1,modeling,MapLim}.

\begin{conjecture}
    Let $\mathscr C$ be a monadically stable class of graphs. Then, every {\sf FO}-convergent sequence of graphs in $\mathscr C$ has a strong modeling {\sf FO}-limit.
\end{conjecture}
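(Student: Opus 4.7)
The natural starting point is the modeling $\mathbf B$ constructed via Theorem~\ref{thm:main} from an ${\sf FO}$-convergent sequence $(\mathbf M_n)_{n\in\mathbb N}$ in $\mathscr C$. Reformulate the strong finitary mass transport principle in the symmetric form: for all Borel $A, B \subset \mathbf B$,
\[
\int_A d_B(x)\,{\rm d}\nu(x) = \int_B d_A(x)\,{\rm d}\nu(x),
\]
where $d_B(x) = |N(x) \cap B|$. The stated inequality then follows from this identity by integrating the hypotheses $d_B \geq a$ on $A$ and $d_A \leq b$ on $B$. The plan is to verify the symmetric identity first for definable $A, B$, and then to extend it to all Borel sets.

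The first step is essentially immediate. In each finite $\mathbf M_n$, the identity holds by double counting edges. By {\L}o\'s's theorem, for internal (hence for parameter-definable) subsets of the ultraproduct $\mathbf U$, the corresponding Loeb-measure identity holds. Preservation of measures of definable sets between $\mathbf B$ and the saturated $\mathbf U^+$ (Theorem~\ref{thm:tb}) then transfers the identity to definable $A, B \subset \mathbf B$.

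The crucial obstacle is extending from definable to arbitrary Borel sets in $\mathbf B$. The elementary embedding $\zeta: \mathbf B \to \mathbf U^+$ preserves measures only of definable sets, so a general Borel subset of $\mathbf B$ has no meaningful image in $\mathbf U^+$. The plan is to exploit the specific structure of $\mathbf B$: the measure is concentrated on $V = \bI \times S \times \{0\}$, and modulo null sets the edge relation between distinct $(r, p, 0)$ and $(r', p', 0)$ depends only on $(p, p')$ via a symmetric relation $G^* \subset S \times S$ determined by the non-forking combination of $1$-types. Hence the graph on $V$ is a \emph{Lebesgue inflation} of a type-space structure $(S, G^*, \nu_0)$, and one hopes to reduce the Borel mass transport identity on $V$ to one on $(S, \nu_0)$, which by approximation of Borel subsets of $S$ by clopen sets should follow from the definable case.

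The hard part will be making this reduction rigorous when $d_B(x)$ can be infinite. Already in the constructed $\mathbf B$, a single ``edge'' $(p, p') \in G^*$ gives rise to uncountably many edges between the blocks $\bI \times \{p\} \times \{0\}$ and $\bI \times \{p'\} \times \{0\}$, so the hypothesis ``$\leq b$ neighbors'' is very restrictive on how $A$ and $B$ may intersect type-blocks. One plausible route is to show that under this hypothesis, $A$ and $B$ are already forced to respect the block structure up to null sets, after which the identity reduces to the definable case. A more robust alternative is to modify the construction of Section~\ref{sec:TB} so that within each block the incident edges are distributed according to a genuine graphing rather than the trivial ``complete bipartite'' pattern between $\bI$-factors; this would likely require a different choice of Borel realization of types than the product construction $\bI \times S \times \omega$. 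The need for such a modification appears to be the crux of the conjecture, and explains why it remains open even given Theorem~\ref{thm:main}.
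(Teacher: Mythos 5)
This statement is not proved in the paper at all: it appears in Section~\ref{sec:conc} as an open conjecture, and the authors offer no argument for it. So there is no ``paper proof'' to match, and your write-up should be judged as a proof attempt on its own terms. As such it is not a proof: by your own admission the central step --- either extending the mass-transport identity from definable to arbitrary Borel subsets of $\tb$, or redesigning the construction of Section~\ref{sec:TB} so that edges between blocks form a genuine graphing rather than block-complete-bipartite patterns --- is left open, and that step is exactly the content of the conjecture. Identifying the obstacle is good diagnosis, but nothing in the proposal actually establishes the strong finitary mass transport principle for the modeling produced by Theorem~\ref{thm:main}, nor does it produce an alternative modeling that satisfies it.

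There is also a concrete problem already in the step you call ``essentially immediate.'' The strong mass transport principle is stated in terms of actual neighbor counts ($a$ and $b$ are integers bounding $|N(x)\cap B|$), whereas what {\L}o\'s's theorem and the Loeb measure transfer from the finite structures are statements about \emph{normalized} measures of definable sets. In $\mathbf M_n$ the double-counting identity $\int_A d_B\,{\rm d}\nu_n=\int_B d_A\,{\rm d}\nu_n$ involves unnormalized degrees integrated against the normalized measure; its ultraproduct avatar is an identity about $\int_A \nu(N(x)\cap B)\,{\rm d}\nu(x)$, which is typically $0$ in the sparse regime and says nothing about counting degrees. To recover counting degrees one must express ``$x$ has exactly $k$ neighbors in $B$'' definably, which is only available when $B$ is definable and degrees into $B$ are finite and bounded; for the Borel sets the conjecture quantifies over, and in the presence of blocks where $d_B(x)$ is uncountable (as in your $K_{n,n}$-type example), this transfer does not go through. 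So even the definable case needs a more careful argument than the one sketched, and the extension beyond it remains entirely open --- which is consistent with the paper leaving this as a conjecture.
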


We further conjecture that Theorem \ref{thm:main} (and Theorem \ref{thm:tb}) extends to the monadically NIP setting. 
\begin{conjecture}
    Let $\sigma$ be a countable signature, and 
let $\mathscr C$ be a  class of $\sigma$-structures. Then, the following properties are equivalent:
\begin{enumerate}
    \item Every {\sf FO}-convergent sequence of $\sigma$-structures in every unary expansion of $\mathscr C$ has a modeling {\sf FO}-limit.
    \item The class $\mathscr C$ is monadically NIP.
\end{enumerate}
\end{conjecture}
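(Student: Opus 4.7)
The final statement is an equivalence between (i) every $\mathsf{FO}$-convergent sequence in every unary expansion of $\mathscr C$ admitting a modeling $\mathsf{FO}$-limit, and (ii) $\mathscr C$ being monadically NIP. My plan is to try to prove both directions by upgrading the monadic-stability techniques of this paper to the monadic-NIP setting, while acknowledging (2)$\Rightarrow$(1) is the serious direction.

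\textbf{Direction (2)$\Rightarrow$(1).} Since monadic NIP is preserved under unary expansions, it suffices to show that any $\mathsf{FO}$-convergent sequence drawn from a monadically NIP class admits a modeling $\mathsf{FO}$-limit, i.e.\ to upgrade Theorem \ref{thm:tb} and then apply the Loeb ultraproduct argument of Theorem \ref{thm:main} verbatim. The plan is to mirror the three-step architecture of the proof of Theorem \ref{thm:tb}, replacing each stability-theoretic ingredient by a monadic-NIP substitute. First, for Section \ref{sec:TB}: the construction of $\tb$ uses non-forking extensions (Fact \ref{fact:forking}(5)) and the amalgamation Lemma \ref{lem:elsub}; the NIP replacement should be coheir (finitely satisfiable) extensions over $\NN$, which always exist and are invariant, combined with the fact that a union of pairwise coheir-independent elementary submodels over a common $\NN$ remains elementary (this should follow from finite satisfiability over $\NN$ and the Tarski--Vaught test, as in the proof of Lemma \ref{lem:elsub}). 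Second, for the dismantling step (Lemma \ref{lem:0petals}): Fact \ref{fact:monstab} fails in NIP, so the forking--equivalence-class decomposition has no immediate analogue; here I would use a recent structural characterization of monadically NIP classes (such as those emerging from Braunfeld--Laskowski work or the flip-width framework) to produce, for some carefully chosen countable $\NN\prec \MM$, a measure-zero ``dependence locus'' around each point. Third, and most delicate, the proof that a correct $1$-dimensional measure forces correct $k$-dimensional measures (Corollary \ref{cor:1tok}) uses stationarity of types over models (Fact \ref{fact:forking}(3)), which genuinely fails in NIP. To repair this, I would reformulate the lifting in terms of Keisler measures: seek a global invariant, \emph{generically stable}, product-commuting measure extending the pushforward $\tp_*^1(m_1)$, and use its uniqueness properties in place of stationarity.

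\textbf{Direction (1)$\Rightarrow$(2).} This direction should be more tractable, and follows the template of the analogous $(1)\Rightarrow(2)$ implication in Theorem 1.1. If $\mathscr C$ is not monadically NIP, then some unary expansion $\mathscr C^+$ admits a formula $\varphi(x;\bar y)$ with the independence property. The plan is to use $\varphi$ to build an $\mathsf{FO}$-convergent sequence $(\mathbf M_i)\in\mathscr C^+$ whose limit measure on $S({\sf FO})$ is ``random-like'' in a $\varphi$-shattering sense --- concretely, so that uncountably many $\varphi$-types over a countable sub-model are realized with positive measure. A putative modeling limit $\tb$ would then have to realize all these $\varphi$-types on a non-null Borel subset, forcing uncountably many distinct Borel sets of the form $\varphi(\tb;\bar b)$; one argues that this is incompatible with $\tb$ being a totally Borel structure over a standard Borel space, by a separability / second-countability argument in the spirit of the non-Borel-representability of random graphons.

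\textbf{Main obstacle.} The decisive difficulty is the failure of stationarity in NIP, which is what makes Section \ref{sec:transfert} work. Without stationarity, the $1$-type measure on $S_1(T)$ does not canonically determine the $k$-type measure on $\NF_k(T)$, and the product-measure identification in Lemma \ref{lem:meas_pres} breaks. The Keisler-measure substitute I propose above (generically stable measure extensions over $\NN$) is the natural candidate, but verifying its existence and its commutation with Fubini in a given monadically NIP, measurable, $\cont$-saturated $\MM$ appears to require new structure theory for monadically NIP theories going beyond what is presently available.
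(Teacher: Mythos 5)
There is nothing in the paper to check your proposal against: the statement you are addressing is stated in Section~\ref{sec:conc} as a \emph{conjecture}, and the paper offers no proof of either direction. Your text is likewise not a proof but a research program, and its acknowledged gaps are genuine and not minor. For (2)$\Rightarrow$(1), each of the three stability-theoretic pillars you propose to replace is load-bearing in a way your substitutes do not yet support. The dismantling step (Lemma~\ref{lem:0petals}) rests on Fact~\ref{fact:monstab} (total triviality and transitivity of forking on singletons), which is essentially characteristic of monadic stability; in the monadically NIP setting there is no known analogous equivalence relation whose classes one could make null, and ``flip-width or Braunfeld--Laskowski structure theory'' is not a concrete lemma. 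The construction of the totally Borel model itself (Theorem~\ref{thm:totBorel}) uses more than existence of independent extensions: it uses Fact~\ref{fact:forking}(3)--(4) (stationarity plus countability of $\phi$-types over $\NN$) both to pin down the isomorphism type of $\tb$ and to write definable sets as countable unions of rectangles; coheir independence in an NIP theory gives neither determination of truth values nor countability of the relevant type data, so even your first step is not a routine replacement. And for the measure-lifting step (Lemma~\ref{lem:meas_pres}, Corollary~\ref{cor:1tok}), your generically stable Keisler-measure proposal would need the pushforward $\tp^1_*(m_1)$ to admit a generically stable, Fubini-commuting global extension and a uniqueness statement strong enough to replace stationarity; you correctly note this is not available, which means the direction remains open rather than proved.

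Your sketch of (1)$\Rightarrow$(2) also contains a concrete error: a Borel probability measure on the Polish space $S({\sf FO})$ can give positive measure to at most countably many pairwise disjoint sets, so one can never arrange ``uncountably many $\varphi$-types over a countable submodel realized with positive measure,'' and the intended contradiction with $\tb$ being totally Borel evaporates. The known obstructions in the monotone case (the $(1)\Rightarrow(2)$ direction of Theorem~1.1, which this paper cites rather than proves) proceed differently, via sequences whose limit behaves like a quasi-random graph, where the issue is incompatibility with a $\{0,1\}$-valued (random-free) representation rather than an uncountability of atoms; whether such an argument can be pushed from the failure of nowhere denseness in monotone classes to the failure of monadic NIP in hereditary classes is exactly part of what the conjecture leaves open. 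In short: the statement is open, the paper proves nothing about it, and your proposal, while it identifies the right main obstacle (failure of stationarity), neither overcomes it nor gives a correct argument for the converse direction.
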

\bibliographystyle{amsplain}
\bibliography{ref}
\end{document}